\definecolor{isabelline}{rgb}{0.96, 0.94, 0.93}
\newcommand*\qCircle[2][1.6]
\newcommand*\tddd{\tikz[baseline=(char.base)]{ 
\node[shape=circle,draw,inner sep=0.09pt] (char) {$\dagger$};}}
\newcommand{\qlestardot}{\overset{c, \dagger}{\leqslant}}
\newcommand{\qlestardod}{\overset{d, \dagger}{\leqslant}}
\newcommand{\qlestardooD}{\overset{d}{\preceq}} 
\newcommand{\qlestardodd}{\overset{\dagger ,d}{\leqslant}}
\newcommand{\nqlestardot}{\overset{c, \dagger}{\nleqslant}}
\newcommand{\nqlestardodd}{\overset{\dagger ,d}{\nleqslant}}
 \newcommand\CG[1]{\textcolor{black}{#1}}
\definecolor{folly}{rgb}{1.0, 0.0, 0.31}
\begin{document}

\title{  Properties of core-EP matrices and binary relationships }
	\titlerunning{Properties of core-EP matrices and binary relationships}

	\author{    
    \mbox {Ehsan Kheirandish \and Abbas Salemi \and N\'estor Thome} 
	}
	\authorrunning{\CG{E. Kheirandish, A. Salemi, N. Thome}} 
	\institute{This paper is dedicated to Professor Chi-Kwong Li in honor of his 65th birthday and in recognition of his substantial contributions to Linear Algebra, Operator Theory, and their applications. 
 \makebox[\textwidth][c]{\hrulefill}\newpage 
		Ehsan Kheirandish 
		\at Department of Applied Mathematics,  Shahid Bahonar University of Kerman, Kerman, Iran\\
		\email{ehsankheirandish@math.uk.ac.ir}
		\and
		Abbas Salemi     
		\at Department of Applied Mathematics,  Shahid Bahonar University of Kerman, Kerman, Iran\\
		\email{salemi@uk.ac.ir}
		\and		
		N\'estor Thome
	\at Instituto Universitario de Matem\'atica Multidisciplinar, Universitat Polit\`ecnica de Val\`encia, Valencia, 46022, Spain  \\
		\email{njthome@mat.upv.es},
}
	\date{}
	\maketitle
	\begin{abstract}
 In this paper,  various properties of core-EP matrices are investigated. We introduce the MPDMP matrix associated with $A$  and by means of it,  some properties and equivalent conditions of core-EP matrices can be obtained. Also, properties of  MPD, DMP, and CMP  inverses are studied and we prove that in the class of core-EP matrices, DMP, MPD, and Drazin inverses are the same.
Moreover,  DMP and MPD binary relation orders are introduced and the relationship between these orders and other binary relation orders are considered.

		\keywords{EP matrix\and Core EP matrix\and CMP-inverse \and Binary relation }
		 \subclass{ 15A09 \and  15A45}
	\end{abstract}
	\section{Introduction}
The term core-EP matrix was introduced in \cite{benitez2010matrices} to emphasize its connection with the class of matrices known as EP matrices, alternatively referred to as range-Hermitian matrices. 
This specific category has received considerable interest over time due to its fascinating properties.
The investigation presented in \cite{further},  focuses on core-EP matrices, identifying several unique features within this class and delineating new characteristics.
 Moreover, characterizations and applications  of core-EP decomposition  can be found in \cite{wang2016, benitez2012canonical, Ferreyra}.  Some more characterizations of  the core-EP inverse and applications 
the perturbation bounds related to the core-EP inverse  and upper bounds for the errors 
$\Vert B^{\tddd}-A^{\tddd}\Vert /\Vert A^{\tddd}\Vert$ and $\Vert BB^{\tddd}-AA^{\tddd}\Vert$
 can be found in literatures \cite{zhou2021characterizations, ma2019characterizations, TOYUER, ma2021characterizations, mosic2021generalization}.

A partial order on a nonempty set is defined as a binary relation that meets the criteria of reflexivity, transitivity, and antisymmetry. Recently, there has been a growing interest among mathematicians in the field of matrix partial ordering, \cite{zhang2023new}.
The applications of generalized inverses extend to various domains including mathematics, channel coding and decoding, navigation signals, machine learning, data storage, and cryptography. Specifically, systematic non-square binary matrices, such as the $(n-k)\times k$ matrix $H$ where $n > k,$ play a crucial role. For this type of matrix, there exist precisely $2k\times (n-k)$ distinct generalized inverse matrices. These matrices find utility in cryptographic systems like the McEliece and Niederreiter public-key cryptosystems \cite{makoui20}, massively parallel systems \cite{stanojevic20}, neural network \cite{xing20}, Engineering \cite{ansari20}, machine learning \cite{kim20}, computation vision \cite{brockett19}, and data mining \cite{lash20}.

In this paper, the set of all $ m \times n $ complex matrices is represented by $M_{m,n}(\mathbb{C})$.
When $m=n$, we will simply write $M_n({\mathbb C})$ instead of $M_{n,n}({\mathbb C})$.  Let $A\in M_{m,n}(\mathbb{C})$,
the symbols $A^{*}$, $\mathcal{R}(A)$, $\mathcal{N}(A)$, and rank($A$) will stand for conjugate transpose, column space, 
null space, and rank of the matrix $ A$, respectively.

Given $ A \in M_{n}(\mathbb{C}), $ the index of $A$ (represented by Ind($A$)) is the smallest non-negative integer $k$ such that
rank($A^{k}$)=rank($A^{k+1}$), and the \emph{Drazin inverse} of  $A$ is the unique solution  
that satisfies 
\begin{equation*}
A^{k+1}X=A^{k}\quad \& \quad XAX=X\quad \& \quad AX=XA,
\end{equation*}
where $k$=Ind($A$), and it is represented by $ A^{d}$. If Ind($A$)$\leqslant 1$, then $A^d$ is called the \emph{group inverse} of $A$ and denoted by $A^\#$.

For $ A \in M_{m,n}(\mathbb{C}) $, if $ X \in M_{n,m}(\mathbb{C}) $ satisfies
\begin{equation}\label{MMoo}
AXA=A \quad \&\quad XAX=X \quad \&\quad (AX)^{*}=XA \quad \& \quad (XA)^{*}=XA,
\end{equation}
then $X$ is called a \emph{Moore-Penrose inverse} of $A$ and this  matrix $X$ is unique and represented by $A^{\dagger}$.

Note that $P_{A}:=AA^{\dagger} $ and $    Q_{A}:=A^{\dagger}A$ are the orthogonal projectors onto 
$\mathcal{R}(A)  $ and $  \mathcal{R}(A^{*}) ,$ respectively (see \cite{Ben, ilic2017algebraic, wang2018generalized, Ben1}).

Let $ A \in M_{n}(\mathbb{C}).$   From \cite[Theorem 2.2.21]{MitraSK}, there are unique matrices $A_{c}$ and $A_{n}$ such that 
$\rm{Ind}(A_{c}) \leqslant 1$ and $A_{n}$ is a nilpotent matrix satisfying $A = A_{c} +  A_{n}$ and $A_{c} A_{n} =  A_{n}A_{c} = 0$.
 The matrix $ A_{c} $ is known as \emph{core part} of $A$ and $ A_{n} $ is known as the \emph{nilpotent part}. 
A matrix  $ A \in M_{n}(\mathbb{C})$ is called \emph{core-EP} if $ A^{\dagger}A_{c}=A_{c}A^{\dagger}$ (see \cite{am16}). It is interesting note that every EP-matrix is a core-EP matrix but the converse, in general, is not true. 

Let $ A \in M_{n}(\mathbb{C}).$ The unique matrix $ X \in M_{n}(\mathbb{C}) $ satisfying 
\begin{equation*}
XAX=X\quad  \&\quad  \mathcal{R}(X)=\mathcal{R}(X^{*})=\mathcal{R}(A^{k}),
\end{equation*}
is called  the \emph{core-EP inverse} of the matrix $A$ and is represented by $ A^{\tddd} $ (see \cite{core}).
The unique matrix $ X \in M_{n}(\mathbb{C}) $ satisfying 
\begin{equation*}
XAX=X\quad \&\quad XA=A^{d}A\quad \& \quad A^{k}X=A^{k}A^{\dagger},
\end{equation*}
is known as the \emph{DMP-inverse} of $A$ and is represented by $ A^{d ,\dagger}$.
Moreover,  the DMP-inverse can be represented as $ A^{d, \dagger}=A^{d}AA^{\dagger}$  (see \cite{Malik}).

The \emph{CMP-inverse} of $ A \in M_{n}(\mathbb{C}) $ was defined as $ A^{c,\dagger}=A^{\dagger}A_{c}A^{\dagger}$ in \cite{am16}. In \cite{KS}, the \emph{CMP-inverse} was improved as the unique solution of the  following equations:
\begin{equation*}
XAX=X\quad \&\quad AX=A_{c}A^{\dagger}\quad \&\quad XA=A^{\dagger}A_{c}.
\end{equation*}
An application of DMP and CMP inverses to tensor can be found in \cite{kheirandish2023generalized}.
Some more properties of these generalized inverses and applications can be found in 
literatures  \cite{Ma, CvMoWe,LiJiCv, ma2020characterizations, wang2024dual}.

In \cite[Corollary 6]{Hartwig}, it was proved that every matrix $ A \in M_{n}(\mathbb{C}) $ with \\rank$(A)=r >0$, has a \emph{Hartwig-Spindelb{\"o}ck 
decomposition:} 
\begin{equation}\label{h}
A = U \left( \begin{array}{cc}
\Sigma Q & \Sigma P \\ 
0 & 0
\end{array}\right) U^{*}, 
\end{equation}
where $ U \in M_{n}(\mathbb{C}) $ is a unitary matrix, $ \Sigma = diag(\sigma_{1}I_{k_{1}}, \sigma_{2}I_{k_{2}}, \ldots, \sigma_{t}I_{k_{t}}) $ 
is a diagonal matrix, the entries on the diagonal $ \sigma_{j} > 0 \quad ( j=1,\cdots ,t )$ being the singular values of the matrix $A$,
$ \sum _{j=1}^t k_{j}=r$,
$ Q \in M_{r}(\mathbb{C}) $ and $ P \in M_{r ,n-r}(\mathbb{C}) $ satisfy 
\begin{equation}\label{KL}
QQ^{*}+ PP^{*} = I_{r}.
\end{equation}
In \cite{Malik}, we can found that
\begin{equation}\label{md}
\begin{small}
A^{\dagger} = U \left( \begin{array}{cc}
Q^{*}\Sigma ^{-1} & 0 \\ 
P^{*}\Sigma ^{-1} & 0
\end{array} \right) U^{*}
\end{small}
\quad \&\quad 
\begin{small}
A^{d} = U \left( \begin{array}{cc}
(\Sigma Q)^{d} & ((\Sigma Q)^{d})^{2}\Sigma P \\ 
0 & 0
\end{array} \right) U^{*}.
\end{small}
\end{equation}
By using $A_{c}=A A^{d}A,$ we have
\begin{equation}\label{cm}
A_{c} = U \left( \begin{array}{cc}
\Sigma \hat{Q} \Sigma Q & \Sigma \hat{Q} \Sigma P \\ 
0 & 0
\end{array} \right) U^{*},
\end{equation} 
where $ \hat{Q}=Q(\Sigma Q)^{d}$. 

The main aim of this paper is to introduce a new matrix (named MPDMP matrix) associated with a given matrix. We found that the MPDMP matrix has interesting properties (for example, see Theorem \ref{c*}, Remark \ref{KheiEHSA78}, and Theorem \ref{ER}). Also, some properties and equivalent conditions of core-EP matrices can be obtained by MPDMP matrices (see Corollary \ref{KheiEHSA7889}).

This paper is organized as follows. Section 2 introduces  the MPDMP matrix associated with $A$, and is devoted to obtaining properties and equivalent conditions of core-EP matrices.  Moreover, we get equivalent conditions for $A^{\dagger ,d,\dagger}, A^{C,\tddd}$ and $  A^{c,\dagger}$ to be an EP matrix. In Section 3,
 new properties of known generalized inverses will be considered. In addition, some properties of DMP and  CMP inverses are studied.
 In Section 4,  DMP and MPD binary relations are defined and their relationship with other binary relations is investigated.
\section{Properties of core-EP matrices}
In this section, the Moore-Penrose-Drazin-Moore-Penrose (MPDMP)  matrix associated with $A$ is introduced, and by using this definition, 
some properties and equivalent conditions of core-EP matrices are presented.
\begin{theorem}
Let $ A \in M_{n}(\mathbb{C})$. 
Then $ X= A^{d}A^{\dagger}$ is the unique solution of the following equations:
\begin{equation}\label{a2}
XP_A=X\quad \& \quad XA=A^{d}.
\end{equation}
Analogously, the unique matrix that satisfies 
\begin{equation*}
Q_AX=X\quad \& \quad AX=A^{d}.
\end{equation*}
is given by $X=A^{\dagger}A^{d}$. 
\end{theorem}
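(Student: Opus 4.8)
The plan is to verify directly that $X = A^d A^\dagger$ satisfies the two equations in \eqref{a2}, and then to establish uniqueness by showing that the two equations force any solution to equal $A^d A^\dagger$.

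\textbf{Existence.} First I would check $X P_A = X$. Since $P_A = A A^\dagger$, this reads $A^d A^\dagger A A^\dagger = A^d A^\dagger$, which is immediate from the Moore-Penrose identity $A^\dagger A A^\dagger = A^\dagger$. For the second equation $X A = A^d$, I compute $A^d A^\dagger A = A^d Q_A$, where $Q_A = A^\dagger A$ is the orthogonal projector onto $\mathcal{R}(A^*)$. The point is that $\mathcal{R}((A^d)^*) \subseteq \mathcal{R}(A^*)$ — indeed, since $A^d = A^d A A^d = (A^d)^2 A$ (using $A^d A = A A^d$), we get $(A^d)^* = A^* ((A^d)^2)^*$, so $Q_A (A^d)^* = (A^d)^*$, i.e. $A^d Q_A = A^d$. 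This gives $X A = A^d$ as required. A cleaner way to see the range inclusion: $\mathcal{R}((A^d)^*) = \mathcal{N}(A^d)^\perp$ and $\mathcal{N}(A^d) = \mathcal{N}(A^k) \supseteq \mathcal{N}(A) = \mathcal{R}(A^*)^\perp$, whence $\mathcal{R}((A^d)^*) \subseteq \mathcal{R}(A^*)$.

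\textbf{Uniqueness.} Suppose $X$ satisfies both $X P_A = X$ and $X A = A^d$. From $X A = A^d$, right-multiply by $A^\dagger$ to get $X A A^\dagger = A^d A^\dagger$, that is, $X P_A = A^d A^\dagger$. Combining with $X P_A = X$ yields $X = A^d A^\dagger$. This settles uniqueness with essentially no computation.

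\textbf{The analogous statement.} For $X = A^\dagger A^d$, the argument is the mirror image: $Q_A X = A^\dagger A A^\dagger A^d = A^\dagger A^d = X$ using $A^\dagger A A^\dagger = A^\dagger$, and $A X = A A^\dagger A^d = P_A A^d = A^d$ using that $\mathcal{R}(A^d) = \mathcal{R}(A^k) \subseteq \mathcal{R}(A)$ so $P_A A^d = A^d$. For uniqueness, if $Q_A X = X$ and $A X = A^d$, then left-multiplying $A X = A^d$ by $A^\dagger$ gives $A^\dagger A X = A^\dagger A^d$, i.e. $Q_A X = A^\dagger A^d$, hence $X = A^\dagger A^d$.

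I do not anticipate a genuine obstacle here; the only mildly delicate point is the range inclusion $\mathcal{R}((A^d)^*) \subseteq \mathcal{R}(A^*)$ (equivalently $\mathcal{R}(A^d) \subseteq \mathcal{R}(A^k) \subseteq \mathcal{R}(A)$), which is what makes the projectors $P_A$ and $Q_A$ act as identities on the appropriate side of $A^d$. Everything else is a one-line consequence of the defining identities of $A^\dagger$ and $A^d$.
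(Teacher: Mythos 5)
Your proof is correct and takes essentially the same route as the paper: direct verification of the two equations (which the paper dismisses as ``evident,'' while you usefully justify $A^{d}A^{\dagger}A=A^{d}$ via the range inclusion $\mathcal{R}((A^{d})^{*})\subseteq\mathcal{R}(A^{*})$), followed by the identical uniqueness chain $X=XP_A=XAA^{\dagger}=A^{d}A^{\dagger}$. There are no gaps.
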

\begin{proof}
It is evident that the matrix $ X=A^{d}A^{\dagger} $ fulfills the two equations in the system (\ref{a2}).
Now, we consider that matrices $X_{1}$ and $ X_{2} $ satisfy (\ref{a2}). Then
\begin{align*}
X_{1}=X_1P_A=X_1AA^{\dagger}=A^{d}A^{\dagger}=X_2AA^{\dagger}=X_2P_A=X_2.
\end{align*}
The case for $X=A^{\dagger}A^{d}$  can be proven in a similar manner.
\end{proof}
\begin{theorem}\label{c*} 
Let $ A \in M_{n}(\mathbb{C}). $ Then $ X= A^{\dagger}A^{d}A^{\dagger}$ is the unique solution of the following equations:
\begin{equation}\label{a1}
XA^{3}X=X\quad \& \quad AX=A^{d}A^{\dagger} \quad \& \quad XA=A^{\dagger }A^{d}.
\end{equation}
\end{theorem}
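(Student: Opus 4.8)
The plan is to split the argument into existence (the displayed matrix $X=A^{\dagger}A^{d}A^{\dagger}$ solves (\ref{a1})) and uniqueness (no other matrix does). Before either, I would isolate two auxiliary identities that do all the work: $AA^{\dagger}A^{d}=A^{d}$ and $A^{d}A^{\dagger}A=A^{d}$, i.e. $P_{A}A^{d}=A^{d}$ and $A^{d}Q_{A}=A^{d}$. These follow purely algebraically: the Drazin axioms $A^{d}AA^{d}=A^{d}$ and $AA^{d}=A^{d}A$ give $A(A^{d})^{2}=A^{d}$ and $(A^{d})^{2}A=A^{d}$, and then the Penrose identity $AA^{\dagger}A=A$ yields $AA^{\dagger}A^{d}=AA^{\dagger}A(A^{d})^{2}=A(A^{d})^{2}=A^{d}$, and symmetrically for the other one. (Conceptually these record the inclusions $\mathcal{R}(A^{d})\subseteq\mathcal{R}(A)$ and $\mathcal{R}((A^{d})^{*})\subseteq\mathcal{R}(A^{*})$; alternatively one can read them off from the Hartwig--Spindelb{\"o}ck formulas (\ref{md}).)

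Granting these, existence is a short substitution: $AX=(AA^{\dagger}A^{d})A^{\dagger}=A^{d}A^{\dagger}$ gives the middle equation, $XA=A^{\dagger}(A^{d}A^{\dagger}A)=A^{\dagger}A^{d}$ gives the third, and then $XA^{3}X=(XA)A(AX)=(A^{\dagger}A^{d})A(A^{d}A^{\dagger})=A^{\dagger}(A^{d}AA^{d})A^{\dagger}=A^{\dagger}A^{d}A^{\dagger}=X$ gives the first. For uniqueness I would feed the last two equations of (\ref{a1}) back into the first: any solution $X$ satisfies $X=XA^{3}X=(XA)A(AX)=(A^{\dagger}A^{d})A(A^{d}A^{\dagger})=A^{\dagger}A^{d}A^{\dagger}$, so it is forced to coincide with the MPDMP matrix.

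The computation is genuinely light; the only place calling for any care is checking the two auxiliary identities $P_{A}A^{d}=A^{d}$ and $A^{d}Q_{A}=A^{d}$, since everything else is an immediate consequence of them together with the ``sandwich'' collapse $XA^{3}X=(XA)A(AX)$ that drives both halves of the proof. I do not expect a real obstacle beyond that.
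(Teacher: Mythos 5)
Your proposal is correct and follows essentially the same route as the paper: the paper's uniqueness argument also substitutes $XA=A^{\dagger}A^{d}$ and $AX=A^{d}A^{\dagger}$ into $X=XA^{3}X=(XA)A(AX)$, the only cosmetic difference being that the paper compares two hypothetical solutions $X_{1},X_{2}$ while you collapse directly to $A^{\dagger}A^{d}A^{\dagger}$ via $A^{d}AA^{d}=A^{d}$. Your explicit verification of the auxiliary identities $AA^{\dagger}A^{d}=A^{d}$ and $A^{d}A^{\dagger}A=A^{d}$ just fills in the existence step the paper calls ``evident.''
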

\begin{proof}
It is evident that the matrix $ X=A^{\dagger}A^{d}A^{\dagger} $ fulfills the three equations in the system (\ref{a1}).
Now, we consider that matrices $X_{1}$ and $ X_{2} $ satisfy (\ref{a1}). Then
\begin{align*}
X_{1}&=X_{1}A^{3}X_{1}=X_1AA^{2}X_1=A^{\dagger}A^{d}AAX_1=A^{\dagger}A^{d}AA^{d}A^{\dagger}\\
&=X_2AAA^{d}A^{\dagger}=X_2AAAX_2=X_2A^{3}X_2=X_2.
\end{align*}
\end{proof}
Now, we define the MPDMP matrix associated with $A$.
\begin{definition}\label{ASER}
Let $ A \in M_{n}(\mathbb{C}). $ 
The Moore-Penrose-Drazin-Moore-Penrose (MPDMP)  matrix associated with $A$ is defined and denoted by 
\begin{equation*}
A^{\dagger ,d,\dagger} :=A^{\dagger}A^{d}A^{\dagger}.
\end{equation*}
\end{definition}
\begin{remark}\label{KheiEHSA78}
Let $ A \in M_{n}(\mathbb{C})$. 
Then a similar approach as in the proof of Theorem \ref{c*}, the following systems of equations  are consistent and they have the unique solution 
$ X= A^{\dagger}A^{d}A^{\dagger}$.
\begin{eqnarray*} 
(i) &Q_{A}XP_{A}=X, ~~~~~&\&~~~~~ AX=A^{d}A^{\dagger}.\\
(ii)&Q_{A}XP_{A}=X, ~~~~~&\&~~~~~ AXA=A^{d}.\\
(iii)& Q_{A}XP_{A}=X, ~~~~&\&~~~~~ XA=A^{\dagger}A^{d}.\\
(iv)& XP_{A}=X, ~~~~~&\&~~~~~ XA=A^{\dagger }A^{d}.\\
(v) &Q_{A}X=X, ~~~~~&\&~~~~~ AX=A^{d}A^{\dagger}.
\end{eqnarray*}
\end{remark}
\begin{theorem}\label{as20}
Let $ A \in M_{n}(\mathbb{C})$ be 
a core-EP matrix. The  following conditions are equivalent: 
\begin{enumerate}
\item the DMP inverse is equal to the Drazin inverse,
\item the MPD (the dual of DMP \cite [Remark 2.9]{Malik}) inverse is equal to the Drazin inverse. 
\end{enumerate}
\end{theorem}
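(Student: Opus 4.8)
The plan is to reduce each of the two conditions to an identity involving the core part $A_c=AA^dA$ of $A$, and then to read off the equivalence from the defining property of a core-EP matrix, $A^{\dagger}A_c=A_cA^{\dagger}$.

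First I would record a few elementary identities: $A^dA=AA^d$, $A^dAA^d=A^d$, and the one-sided relations $A^dA_c=(A^dAA^d)A=A^dA$ and $A_cA^d=A(A^dAA^d)=AA^d=A^dA$. Using $A^{d,\dagger}=A^dAA^{\dagger}$, condition (1) is the equality $A^dAA^{\dagger}=A^d$. Left-multiplying it by $A$ and using $AA^dA=A_c$ and $AA^d=A^dA$ shows that (1) implies $A_cA^{\dagger}=A^dA$; conversely, left-multiplying $A_cA^{\dagger}=A^dA$ by $A^d$ and using $A^dA_c=A^dA$ together with $A^dA^dA=A^d$ recovers $A^dAA^{\dagger}=A^d$. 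Hence (1) holds if and only if $A_cA^{\dagger}=A^dA$. The symmetric computation, now based on the dual formula $A^{\dagger,d}=A^{\dagger}AA^d$ for the MPD inverse, shows that (2) holds if and only if $A^{\dagger}A_c=A^dA$: right-multiply the equality $A^{\dagger}AA^d=A^d$ by $A$ for one direction, and right-multiply $A^{\dagger}A_c=A^dA$ by $A^d$, using $A_cA^d=A^dA$ and $A^dAA^d=A^d$, for the other.

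Now the hypothesis enters: since $A$ is core-EP, $A_cA^{\dagger}=A^{\dagger}A_c$, so the condition $A_cA^{\dagger}=A^dA$ characterizing (1) is literally the same as the condition $A^{\dagger}A_c=A^dA$ characterizing (2). Therefore (1) $\Longleftrightarrow$ (2). Essentially all of the work is the two routine reductions in the previous paragraph; the only place needing a little care is getting the one-sided identities $A^dA_c=A^dA$ and $A_cA^d=A^dA$ in the right form, and there is no real obstacle beyond that. I would also append the remark that for a core-EP matrix $\mathcal{R}(A^k)=\mathcal{R}((A^{*})^k)$, from which $A_cA^{\dagger}=A^dA=A^{\dagger}A_c$ in fact holds unconditionally; this upgrades the equivalence to the stronger statement that $A^{d,\dagger}$, $A^{\dagger,d}$ and $A^d$ all coincide in the core-EP class.
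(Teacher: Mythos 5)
Your argument is correct and follows essentially the same route as the paper: both reduce condition (1) to $A_cA^{\dagger}=AA^{d}$ and condition (2) to $A^{\dagger}A_c=AA^{d}$ via multiplication by $A$ and $A^{d}$, and then invoke the defining core-EP identity $A^{\dagger}A_c=A_cA^{\dagger}$ to bridge the two; the paper merely writes this as one chain of equivalences rather than two separate reductions. Your closing remark that all three inverses in fact coincide on the core-EP class is also the paper's own follow-up observation (there justified by citing results of Mehdipour--Salemi rather than a range argument).
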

\begin{proof}
Suppose that $A$ is a core-EP matrix. We have
\begin{align*}
A^{d,\dagger}=A^d&\Leftrightarrow A^{d}AA^{\dagger}=A^d\Leftrightarrow (A A^d A) A^{\dagger} =A A^d\Leftrightarrow A_{c}A^{\dagger}=AA^d\\
&\Leftrightarrow A^{\dagger}A_{c}=AA^d
\Leftrightarrow A^{\dagger}AA^{d}A=AA^d
\Leftrightarrow A^{\dagger}AA^{d}=A^d
\Leftrightarrow A^{\dagger ,d}=A^{d}.
\end{align*}
\end{proof}
We have proved that in the class of core-EP matrices, DMP, MPD, and Drazin inverses are the same.
The hypothesis of core-EPness in Theorem \ref{as20} is essential. 
\begin{example}\label{EXAMV5}
Let
$
A = \left( \begin{array}{ccc}
2 & 0 & 1 \\ 
0 & 0 & 2 \\ 
0 & 0 & 0 
\end{array} \right).
$
Then, Ind($A$)$=2$, 
\begin{equation*}
A^{\dagger} = \left( \begin{array}{ccc}
\frac{1}{2} & -\frac{1}{4} & 0 \\ 
0 & 0 & 0 \\
0 & \frac{1}{2} & 0 
\end{array} \right), \quad
A^{d} = \left( \begin{array}{ccc}
\frac{1}{2} & 0 & \frac{1}{4} \\ 
0 & 0 & 0 \\ 
0 & 0 & 0 
\end{array} \right),
\end{equation*}
\begin{equation*}
A^{d,\dagger} = \left( \begin{array}{ccc}
\frac{1}{2} & 0 & 0 \\ 
0 & 0 & 0 \\ 
0 & 0 & 0 
\end{array} \right), \quad
A^{\dagger ,d} =\left( \begin{array}{ccc}
\frac{1}{2} & 0 & \frac{1}{4} \\ 
0 & 0 & 0 \\ 
0 & 0 & 0 
\end{array} \right).
\end{equation*}
It is evident that $A^{\dagger , d}= A^{d} $ and $ A^{d,\dagger} \neq A^{d} . $ This fact is due to the core-EPness of matrix $A$ fails.
\end{example}
Employing  a similar method as in the proof of Theorem \ref{as20},  the following holds
\begin{corollary}\label{TTas20}
Let $ A \in M_{n}(\mathbb{C})$ be 
a core-EP matrix. The  following conditions are equivalent: 
\begin{enumerate}
\item the MPDMP  matrix associated with $A$ is equal to the  DMP  inverse of $A$,
\item the MPDMP matrix associated with $A$ is equal to the MPD  inverse of $A$. 
\end{enumerate}
\end{corollary}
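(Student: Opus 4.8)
The plan is to prove that the first condition implies the second by a chain of manipulations modelled on the proof of Theorem~\ref{as20}, and to obtain the reverse implication from the mirror-image computation, with the roles of left and right multiplication by $A^{\dagger}$ interchanged (equivalently, by applying the same reasoning to $A^{*}$, which is core-EP precisely when $A$ is). Once one knows, as recorded just after Theorem~\ref{as20}, that for a core-EP matrix $A^{d,\dagger}=A^{\dagger,d}=A^{d}$, there is in fact nothing left to prove, since then both conditions merely assert $A^{\dagger,d,\dagger}=A^{d}$; the argument below re-derives from scratch exactly what is needed.

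Assume the first condition, i.e.\ $A^{\dagger}A^{d}A^{\dagger}=A^{d}AA^{\dagger}$. Multiplying on the right by $A$ and using $AA^{\dagger}A=A$ gives $A^{\dagger}A^{d}A^{\dagger}A=A^{d}A$. Multiplying this on the left by $A$ and using $A_{c}=AA^{d}A$ gives $AA^{\dagger}A^{d}A^{\dagger}A=A_{c}$; then multiplying on the left by $A^{\dagger}$ and using $A^{\dagger}AA^{\dagger}=A^{\dagger}$ gives $A^{\dagger}A^{d}A^{\dagger}A=A^{\dagger}A_{c}$. Comparing the two expressions obtained for $A^{\dagger}A^{d}A^{\dagger}A$ yields $A^{d}A=A^{\dagger}A_{c}$, and core-EPness turns this into $A^{d}A=A_{c}A^{\dagger}=AA^{d}AA^{\dagger}$; since $A^{d}A=AA^{d}$ this reads $AA^{d}=AA^{d}AA^{\dagger}$, and left multiplication by $A^{d}$ together with $A^{d}AA^{d}=A^{d}$ gives $A^{d}=A^{d}AA^{\dagger}=A^{d,\dagger}$. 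Thus the first condition forces $A^{d,\dagger}=A^{d}$; by Theorem~\ref{as20} this is equivalent to $A^{\dagger,d}=A^{d}$, so $A^{d,\dagger}=A^{\dagger,d}$, and combining with $A^{\dagger,d,\dagger}=A^{d,\dagger}$ we obtain $A^{\dagger,d,\dagger}=A^{\dagger,d}$, which is the second condition. The converse is the same computation with left and right multiplications interchanged: it shows the second condition forces $A^{\dagger,d}=A^{d}$, whence (by Theorem~\ref{as20} again) $A^{d,\dagger}=A^{d}$, and then the first condition follows.

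I expect the main obstacle to be organising the logic rather than any single calculation: unlike in the proof of Theorem~\ref{as20}, the implications that carry the first condition down to $A^{d,\dagger}=A^{d}$ are not visibly reversible (for instance the step of premultiplying by $A$ cannot simply be undone), so the proof cannot be packaged as one string of equivalences. The way around this is exactly the two-sided structure above: reduce each of the two conditions to the corresponding Drazin identity, let Theorem~\ref{as20} link $A^{d,\dagger}=A^{d}$ with $A^{\dagger,d}=A^{d}$, and then close the loop. The core-EP hypothesis enters only through $A^{\dagger}A_{c}=A_{c}A^{\dagger}$, and this single use is again what makes it indispensable.
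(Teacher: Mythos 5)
Your proof is correct: each manipulation checks out, and the reduction of condition (1) to $A^{d,\dagger}=A^{d}$ (and, by the mirror computation, of condition (2) to $A^{\dagger,d}=A^{d}$) is sound, as is the final assembly through Theorem~\ref{as20}. Note, though, that the paper offers no written proof here; it only asserts that the corollary follows ``by a similar method'' to Theorem~\ref{as20}, i.e.\ presumably by a single chain of equivalences using $A^{\dagger}A_{c}=A_{c}A^{\dagger}$ once. Your argument uses exactly that toolkit (one-sided multiplications by $A$ and $A^{\dagger}$, one appeal to core-EPness per direction) but organizes it differently: in place of one reversible chain you prove two one-way implications, each passing through the stronger intermediate fact that the hypothesis forces the DMP (respectively MPD) inverse to collapse to $A^{d}$, and you then let Theorem~\ref{as20} carry the information across to the other side. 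Your observation that the individual steps are not visibly reversible, so the statement cannot simply be repackaged as one string of biconditionals, is a fair and genuinely useful point; what your route buys is a complete argument where the paper has only a claim of analogy, together with the sharper conclusion that under either hypothesis all of $A^{\dagger,d,\dagger}$, $A^{d,\dagger}$, $A^{\dagger,d}$ and $A^{d}$ coincide.
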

We have proved that in the class of core-EP matrices,  DMP, MPD inverses, and MPDMP matrix associated with $A$,  are the same.
The hypothesis of core-EPness in Corollary \ref{TTas20} is essential, which means that the class of core-EP matrices is the biggest one on which those three matrices coincide.
\begin{example}\label{EXAMV5}
Let
$
A = \left( \begin{array}{ccc}
1 & 0 & 0 \\ 
1 & 0 & 1 \\ 
0 & 0 & 0 
\end{array} \right).
$
Then, Ind($A$)$=2$, 
\begin{equation*}
A^{\dagger} = \left( \begin{array}{ccc}
1 & 0 & 0 \\ 
0 & 0 & 0 \\
-1 & 1 & 0 
\end{array} \right), \quad
A^{d} = \left( \begin{array}{ccc}
1 & 0 & 0 \\ 
1 & 0 & 0 \\ 
0 & 0 & 0 
\end{array} \right), \quad
A^{\dagger ,d,\dagger} = \left( \begin{array}{ccc}
1 & 0 & 0 \\ 
0 & 0 & 0 \\ 
0 & 0 & 0 
\end{array} \right),
\end{equation*}
\begin{equation*}
A^{d,\dagger} = \left( \begin{array}{ccc}
1 & 0 & 0 \\ 
1 & 0 & 0 \\ 
0 & 0 & 0 
\end{array} \right), \quad
A^{\dagger ,d} =\left( \begin{array}{ccc}
1 & 0 & 0 \\ 
0 & 0 & 0 \\ 
0 & 0 & 0 
\end{array} \right).
\end{equation*}
It is evident that $A^{\dagger , d,\dagger}= A^{\dagger ,d} $ and $ A^{\dagger ,d,\dagger} \neq A^{d,\dagger} . $ This fact is due to the core-EPness of matrix $A$ fails.
\end{example}
\begin{proposition}\cite[Theorem 2.5 and Remark 2.9]{Malik}
Let $ A \in M_{n}(\mathbb{C})$ be of the form (\ref{h}). Then
\begin{equation}\label{mdhkp}
\begin{small}
A^{d,\dagger} = U \left( \begin{array}{cc}
(\Sigma Q )^{d} & 0 \\ 
0 & 0
\end{array} \right) U^{*}
\end{small}
\quad \&\quad 
\begin{small}
A^{\dagger ,d} = U \left( \begin{array}{cc}
Q^{*}\hat{Q} & Q^{*}\hat{Q}(\Sigma Q)^{d}\Sigma P \\ 
P^{*}\hat{Q} & P^{*}\hat{Q}(\Sigma Q)^{d}\Sigma P 
\end{array} \right) U^{*}.
\end{small}
\end{equation}
\end{proposition}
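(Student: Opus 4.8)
The plan is a direct block computation starting from the Hartwig--Spindelb\"ock form (\ref{h}) together with the explicit expressions for $A^{\dagger}$ and $A^{d}$ recorded in (\ref{md}). Since the DMP inverse admits the factorization $A^{d,\dagger}=A^{d}AA^{\dagger}$ and, dually (\cite[Remark 2.9]{Malik}), the MPD inverse is $A^{\dagger,d}=A^{\dagger}AA^{d}$, both identities reduce to multiplying three $2\times2$ block matrices that are conjugated by one and the same unitary $U$, so the outer unitary factors cancel and only the inner block products have to be tracked.

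For $A^{d,\dagger}$ I would first compute $A^{d}A$, whose $(1,1)$ and $(1,2)$ blocks are $(\Sigma Q)^{d}\Sigma Q$ and $(\Sigma Q)^{d}\Sigma P$ and whose second block row vanishes, and then right-multiply by the block form of $A^{\dagger}$ from (\ref{md}). The $(1,1)$ block of the resulting product is $(\Sigma Q)^{d}\Sigma Q Q^{*}\Sigma^{-1}+(\Sigma Q)^{d}\Sigma P P^{*}\Sigma^{-1}=(\Sigma Q)^{d}\Sigma\,(QQ^{*}+PP^{*})\,\Sigma^{-1}$, which collapses to $(\Sigma Q)^{d}$ precisely because of the constraint $QQ^{*}+PP^{*}=I_{r}$ in (\ref{KL}); every other block is zero, and this yields the claimed form of $A^{d,\dagger}$. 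The invocation of (\ref{KL}) is the only step that is not pure bookkeeping.

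For $A^{\dagger,d}$ I would compute $A^{\dagger}A$ first; using $\Sigma^{-1}\Sigma=I_{r}$ its four blocks are $Q^{*}Q$, $Q^{*}P$, $P^{*}Q$, $P^{*}P$, and then right-multiply by the block form of $A^{d}$ from (\ref{md}). Writing $\hat{Q}=Q(\Sigma Q)^{d}$ as in (\ref{cm}), the $(1,1)$ block is $Q^{*}Q(\Sigma Q)^{d}=Q^{*}\hat{Q}$, the $(1,2)$ block is $Q^{*}Q\big((\Sigma Q)^{d}\big)^{2}\Sigma P=Q^{*}\hat{Q}(\Sigma Q)^{d}\Sigma P$, and the second block row is obtained identically with $P^{*}$ in place of $Q^{*}$; this matches the displayed expression for $A^{\dagger,d}$ exactly.

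The only genuine hazard is clerical: keeping the order of the three factors straight (DMP versus its dual MPD) and recalling that the simplification in the $A^{d,\dagger}$ computation is forced by (\ref{KL}) rather than by any property of $(\Sigma Q)^{d}$ on its own. Beyond that the argument is routine block arithmetic, so I do not anticipate a substantive obstacle.
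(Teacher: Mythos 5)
Your computation is correct and complete: both block products check out, the collapse of the $(1,1)$ block of $A^{d}AA^{\dagger}$ via $QQ^{*}+PP^{*}=I_{r}$ is exactly the one needed step that is not pure bookkeeping, and the $A^{\dagger}AA^{d}$ product reproduces the displayed form of $A^{\dagger,d}$ with $\hat{Q}=Q(\Sigma Q)^{d}$. The paper itself gives no proof here --- it simply cites \cite[Theorem 2.5 and Remark 2.9]{Malik} --- and your direct block verification is precisely the standard derivation behind that citation.
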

\begin{theorem}
Let $ A \in M_{n}(\mathbb{C})$ be 
a core-EP matrix. Then
\begin{enumerate}[label=(\roman*)]
\item 
$ A^{\dagger ,d}A=AA^{\dagger ,d},$
\item
$ A^{\dagger ,d}A^{d}=A^{d}A^{\dagger ,d},$
\item
$ A^{\dagger ,d}A_{c}=A_{c}A^{\dagger ,d},$
\item
$ A^{\dagger,d,\dagger}A^{\dagger , d}=A^{\dagger , d}A^{\dagger,d,\dagger}, $
\item
$A_{c}=A^{c,\dagger}A^{2}=A^{\dagger ,d}A^{2}=A^{d,\dagger}A^{2}$,
\item
$ Q_{A}A_{c}=A_{c}Q_{A}=A_c $. 
\end{enumerate}
\end{theorem}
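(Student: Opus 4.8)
The plan is to lean on the identity established in the discussion following Theorem~\ref{as20}: for a core-EP matrix $A$ one has $A^{d,\dagger}=A^{\dagger,d}=A^{d}$. Granting this, items (i)--(iii) become one-liners. Part (i) reads $A^{d}A=AA^{d}$, which is one of the defining equations of the Drazin inverse; part (ii) is trivial since both sides collapse to $(A^{d})^{2}$; and part (iii) holds because $A^{d}$ is a polynomial in $A$ and hence commutes with $A_{c}=A^{2}A^{d}$ (explicitly, $A^{d}A_{c}=A^{2}(A^{d})^{2}=A_{c}A^{d}$). So the real content lies in (iv), (v), (vi), which is also where core-EPness, and not merely the coincidence of the three inverses, genuinely enters.

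For (vi) I would first isolate the \emph{unconditional} identity $A_{c}Q_{A}=A_{c}$: since $A_{c}=AA^{d}A$ one computes $A_{c}Q_{A}=AA^{d}A\,A^{\dagger}A=AA^{d}A=A_{c}$. For the remaining equality, write $Q_{A}A_{c}=A^{\dagger}(AA_{c})=A^{\dagger}(A_{c}A)$ using $AA_{c}=A_{c}A=A^{3}A^{d}$, and then invoke core-EPness in the form $A^{\dagger}A_{c}=A_{c}A^{\dagger}$ to obtain $Q_{A}A_{c}=A_{c}A^{\dagger}A=A_{c}Q_{A}=A_{c}$. Part (v) then falls out: $A^{\dagger,d}A^{2}=A^{d}A^{2}=A_{c}$ and $A^{d,\dagger}A^{2}=A^{d}A^{2}=A_{c}$ by the inverse-coincidence, while for the CMP-inverse one uses its defining equation $A^{c,\dagger}A=A^{\dagger}A_{c}$ from \cite{KS}, whence $A^{c,\dagger}A^{2}=A^{\dagger}A_{c}A=A_{c}A^{\dagger}A=A_{c}$, again by core-EPness together with the identity $A_{c}Q_{A}=A_{c}$.

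The hard part is (iv), and here I would prove the following lemma: \emph{if $A$ is core-EP, then $A^{\dagger}A^{d}=A^{d}A^{\dagger}=(A^{d})^{2}$.} For the first equality, rewrite $A^{\dagger,d}=A^{\dagger}AA^{d}=A^{d}$ as $Q_{A}A^{d}=A^{d}$ and combine it with the Drazin identity $A^{d}=AA^{d}A^{d}$ to get $A^{\dagger}A^{d}=A^{\dagger}(AA^{d}A^{d})=(A^{\dagger}AA^{d})A^{d}=(A^{d})^{2}$. For the second equality I would exploit that the class of core-EP matrices is closed under conjugate transpose: taking $*$ in $A^{\dagger}A_{c}=A_{c}A^{\dagger}$ and using $(A^{d})^{*}=(A^{*})^{d}$ (so that $(A_{c})^{*}=(A^{*})_{c}$) shows that $A^{*}$ is again core-EP, so the first equality applied to $A^{*}$ and transposed back gives $A^{d}A^{\dagger}=(A^{d})^{2}$. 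With the lemma, $A^{\dagger,d,\dagger}=A^{\dagger}A^{d}A^{\dagger}=(A^{d})^{2}A^{\dagger}=A^{d}(A^{d}A^{\dagger})=(A^{d})^{3}$; since $A^{\dagger,d,\dagger}$ and $A^{\dagger,d}=A^{d}$ are then both powers of $A^{d}$, they commute, which is exactly (iv).

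I expect the lemma behind (iv) to be the delicate point: one has to verify that core-EPness really does force $A^{\dagger}$ and $A^{d}$ to commute — this fails for arbitrary square matrices — and the cleanest route I see is through the conjugate-transpose symmetry of the class, which in turn is immediate once one observes that the defining relation $A^{\dagger}A_{c}=A_{c}A^{\dagger}$ is self-conjugate. Beyond that, the whole argument is bookkeeping with the orthogonal projectors $P_{A}=AA^{\dagger}$, $Q_{A}=A^{\dagger}A$, the Drazin identities $AA^{d}=A^{d}A$ and $A^{d}=AA^{d}A^{d}$, and the already-proven equalities $A^{d,\dagger}=A^{\dagger,d}=A^{d}$ for core-EP matrices.
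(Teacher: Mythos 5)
Your proposal is correct, but it proves the theorem by a genuinely different route than the paper. The paper works almost entirely inside the Hartwig--Spindelb\"ock decomposition \eqref{h}: it invokes \cite[Lemma 3.2]{am16} to translate core-EPness into the three block conditions $Q^{*}\hat{Q}=(\Sigma Q)^{d}$, $P^{*}\hat{Q}=0$, $(\Sigma Q)^{d}\Sigma P=0$, and then verifies (i), (iii), (iv), (v) by computing both sides in block form (only (ii) and (vi) are done by direct algebra there, and (vi) exactly as you do it). You instead run everything through the single coincidence $A^{d,\dagger}=A^{\dagger,d}=A^{d}$ for core-EP matrices, which trivializes (i)--(iii) and (v), and your lemma $A^{\dagger}A^{d}=A^{d}A^{\dagger}=(A^{d})^{2}$ yields $A^{\dagger,d,\dagger}=(A^{d})^{3}$ --- i.e.\ you recover, coordinate-free, the ``only if'' direction of Theorem \ref{ehsan100}, after which (iv) is immediate since both factors are powers of $A^{d}$. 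That is arguably cleaner and more conceptual than the paper's block bookkeeping. Two remarks. First, be careful about where the coincidence $A^{d,\dagger}=A^{\dagger,d}=A^{d}$ actually comes from: Theorem \ref{as20} as stated only proves the \emph{equivalence} of ``DMP $=$ Drazin'' and ``MPD $=$ Drazin'' for core-EP matrices, not that either holds; the equalities themselves must be imported from \cite[Theorems 3.3, 3.5, 3.6]{am16} (which the paper itself cites in its proof of (v)) or checked via the block conditions above, so you should cite those results rather than the ``discussion following Theorem \ref{as20}''. Second, your conjugate-transpose symmetry argument for $A^{d}A^{\dagger}=(A^{d})^{2}$ is valid but unnecessary: from $A^{d}=(A^{d})^{2}A$ one gets $A^{d}A^{\dagger}=(A^{d})^{2}AA^{\dagger}=A^{d}\,(A^{d}AA^{\dagger})=A^{d}A^{d,\dagger}=(A^{d})^{2}$ directly from $A^{d,\dagger}=A^{d}$, dually to your computation of $A^{\dagger}A^{d}$.
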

\begin{proof}
Let $ A \in M_{n}(\mathbb{C})$ be written as in (\ref{h}).
By \cite[Lemma 3.2]{am16}, we get 
\begin{equation}\label{40}
Q^{*}\hat{Q} = \left( \Sigma Q \right)^{d}\quad \& \quad 
P^{*}\hat{Q} = 0\quad \& \quad 
( \Sigma Q )^{d} \Sigma P = 0. 
\end{equation}

$ (i) $ 
By using  (\ref{h}),  (\ref{KL}) and (\ref{mdhkp}), we have
\begin{equation}\label{151}
\begin{split}
A^{\dagger ,d}A&=U \left( \begin{array}{cc}
Q^{*}\hat{Q}\Sigma Q &Q^{*}\hat{Q}\Sigma P \\ 
P^{*}\hat{Q}\Sigma Q & P^{*}\hat{Q}\Sigma P
\end{array} \right) U^{*}, \\ 
AA^{\dagger ,d}&=U \left( \begin{array}{cc}
\Sigma \hat{Q} &(\Sigma Q)^{d}\Sigma P \\ 
0 & 0
\end{array} \right) U^{*}. 
\end{split}
\end{equation}
Now, from (\ref{40}) and (\ref{151}), we obtain $ A^{\dagger ,d}A=AA^{\dagger ,d}.$

$ (ii) $ It follows from $(i)$ and by using that $A^d$ is a polynomial on $A$ (see \cite{Ben1}).

$ (iii) $ By using  (\ref{cm}) and  (\ref{mdhkp}), we have
\begin{equation}\label{a69}
\begin{split}
A^{\dagger ,d}A_{c}&=U \left( \begin{array}{cc}
Q^{*}\hat{Q}\Sigma Q &Q^{*}\hat{Q}\Sigma P \\ 
P^{*}\hat{Q}\Sigma Q & P^{*}\hat{Q}\Sigma P
\end{array} \right) U^{*},\\ 
A_{c}A^{\dagger ,d}&=U 
\left( \begin{array}{cc}
\Sigma \hat{Q} &(\Sigma Q)^{d}\Sigma P \\ 
0 & 0
\end{array} \right) U^{*}.
\end{split}
\end{equation}
Therefore, by (\ref{40}) and (\ref{a69}), we have $ A^{\dagger ,d}A_{c}=A_{c}A^{\dagger ,d}.$

$ (iv) $ Let $ A\in M_{n}(\mathbb{C}) $ be as in (\ref{h}) and denote $\tilde{\Sigma}=\hat{Q}((\Sigma Q)^{d})^{2}$.  
By using Definition \ref{ASER}, (\ref{KL}) and (\ref{md}), we have that
\begin{equation}\label{WMM2}
A^{\dagger ,d,\dagger}=U \left( \begin{array}{cc}
Q^{*}\tilde{\Sigma} &0\\ 
P^{*}\tilde{\Sigma} & 0
\end{array} \right) U^{*}
\end{equation}
By using (\ref{mdhkp}) and (\ref{WMM2}), we have
\begin{equation}\label{ab270}
\begin{split}
A^{\dagger ,d,\dagger}A^{\dagger , d}&= U \left( \begin{array}{cc}
Q^{*}\tilde{\Sigma}Q^{*}\hat{Q} & Q^{*}\tilde{\Sigma}Q^{*}\hat{Q}(\Sigma Q)^{d}\Sigma P \\ 
P^{*}\tilde{\Sigma}Q^{*}\hat{Q} & P^{*}\tilde{\Sigma}Q^{*}\hat{Q}(\Sigma Q)^{d}\Sigma P
\end{array} \right) U^{*}, \\
A^{\dagger , d}A^{\dagger,d,\dagger}&= U \left( \begin{array}{cc}
Q^{*}\tilde{\Sigma}(\Sigma Q)^{d} & 0 \\ 
P^{*}\tilde{\Sigma}(\Sigma Q)^{d} & 0
\end{array} \right) U^{*}. 
\end{split}
\end{equation}
Therefore, by (\ref{40}) and (\ref{ab270}), we have that
$ A^{\dagger,d,\dagger}A^{\dagger , d}=A^{\dagger , d}A^{\dagger,d,\dagger}.$

$ (v) $ 
From \cite[Theorem 3.3]{am16} we have that $A^{c,\dag}=A^d$ provided that $A$ is a core-EP matrix. So, $A_c=AA^dA=A^d A^2=A^{c,\dag} A^2$. Similarly, by \cite[Theorem 3.6]{am16}, $A^{d,\dag}=A^{\dag,d}$ holds, from which it only remais to prove that $A_c=A^{d,\dag}A^2$. In fact, 
\begin{align}\label{a270} 
A^{d,\dagger}A^{2}&= U \left( \begin{array}{cc}
\Sigma \hat{Q}(\Sigma Q) & \Sigma \hat{Q} \Sigma P \\ 
0 & 0
\end{array} \right) U^{*}. 
\end{align}
Therefore, by (\ref{40}) and (\ref{a270}), we have 
$A_{c}=A^{d,\dagger}A^{2}. $

$ (vi) $ By definition and using that $A$ is a core-EP matrix we have that \\
$Q_A A_c =A^\dag A A A^d A=A^\dag (A A^d A) A=(A A^d A) A^\dag A=A A^d A=A_c$. Similarly,\\ $A_c Q_A = A A^d A A^\dag A=A A^d A=A_c$. Hence, we have the required equalities.
\end{proof}
\begin{theorem}
Let $ A \in M_{n}(\mathbb{C})$ be 
a core-EP matrix with Ind$(A)=k.$ Then $ A^{\dagger ,d,\dagger}$ is the unique matrix $X$ that satisfies
\begin{equation}\label{kj43}
A^{3}X=P_{\mathcal{R}(A^k) , \mathcal{N}(A^k)}, \quad \quad \mathcal{R}(X) \subseteq \mathcal{R}(A^{k}).
\end{equation}
\end{theorem}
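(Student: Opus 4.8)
The plan is to verify directly that $X:=A^{\dagger,d,\dagger}=A^{\dagger}A^{d}A^{\dagger}$ satisfies both conditions in (\ref{kj43}) and then to establish uniqueness from the fact that $\mathcal{R}(A^{k})$ and $\mathcal{N}(A^{3})$ meet only in $0$. For the projector equation I would peel off $A^{3}A^{\dagger}A^{d}A^{\dagger}$ one factor at a time: since $A^{d}=A(A^{d})^{2}$ we have $\mathcal{R}(A^{d})\subseteq\mathcal{R}(A)$, hence $AA^{\dagger}A^{d}=A^{d}$ and the product collapses to $A^{2}A^{d}A^{\dagger}$; then $A^{2}A^{d}=AA^{d}A=A_{c}$, leaving $A_{c}A^{\dagger}$; and since $\mathcal{R}(A_{c})\subseteq\mathcal{R}(A)$ we likewise have $AA^{\dagger}A_{c}=A_{c}$, so $A_{c}A^{\dagger}=A(A^{\dagger}A_{c}A^{\dagger})=AA^{c,\dagger}$. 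Here the core-EP hypothesis enters, through \cite[Theorem 3.3]{am16}: $A^{c,\dagger}=A^{d}$, so $A^{3}A^{\dagger,d,\dagger}=AA^{d}$, and $AA^{d}=A^{d}A$ is exactly the spectral projector $P_{\mathcal{R}(A^{k}),\mathcal{N}(A^{k})}$. (The same identity is available computationally: by (\ref{WMM2}) and (\ref{40}), for core-EP $A$ one has $A^{\dagger,d,\dagger}=U\,\mathrm{diag}\big(((\Sigma Q)^{d})^{3},0\big)U^{*}$, and multiplying by $A^{3}=U\left(\begin{smallmatrix}(\Sigma Q)^{3}&(\Sigma Q)^{2}\Sigma P\\0&0\end{smallmatrix}\right)U^{*}$, together with $(\Sigma Q)^{3}((\Sigma Q)^{d})^{3}=(\Sigma Q)(\Sigma Q)^{d}$, yields $U\,\mathrm{diag}\big((\Sigma Q)(\Sigma Q)^{d},0\big)U^{*}$, which is $AA^{d}$ by (\ref{md}) and (\ref{40}).)

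For the range inclusion, the most direct argument also uses the Hartwig--Spindelb\"ock form just obtained. A matrix and any positive power of its Drazin inverse have the same range, so $\mathcal{R}(A^{\dagger,d,\dagger})=U\big(\mathcal{R}(((\Sigma Q)^{d})^{3})\times\{0\}\big)=U\big(\mathcal{R}((\Sigma Q)^{d})\times\{0\}\big)$, while $\mathcal{R}(A^{k})=\mathcal{R}(AA^{d})=U\big(\mathcal{R}((\Sigma Q)(\Sigma Q)^{d})\times\{0\}\big)=U\big(\mathcal{R}((\Sigma Q)^{d})\times\{0\}\big)$; hence in fact $\mathcal{R}(A^{\dagger,d,\dagger})=\mathcal{R}(A^{k})$, which is more than the required inclusion. (Conceptually, for a core-EP matrix $\mathcal{R}(A^{k})$ reduces $A$, so along $\mathbb{C}^{n}=\mathcal{R}(A^{k})\oplus\mathcal{R}(A^{k})^{\perp}$ the matrices $A$, $A^{\dagger}$ and $A^{d}$ are all block diagonal, whence $\mathcal{R}(A^{\dagger}A^{d}A^{\dagger})\subseteq A^{\dagger}\mathcal{R}(A^{d})\subseteq A^{\dagger}\mathcal{R}(A^{k})\subseteq\mathcal{R}(A^{k})$.)

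For uniqueness, suppose $X_{1}$ and $X_{2}$ both satisfy (\ref{kj43}) and set $Y=X_{1}-X_{2}$, so that $A^{3}Y=0$ and $\mathcal{R}(Y)\subseteq\mathcal{R}(A^{k})$. An arbitrary column of $Y$ has the form $y=A^{k}z$ with $A^{3}y=0$, hence $A^{k+3}z=0$; as $\mathrm{Ind}(A)=k$ forces $\mathcal{N}(A^{k+3})=\mathcal{N}(A^{k})$, we get $y=A^{k}z=0$, so $Y=0$ and $X_{1}=X_{2}$. Thus $A^{\dagger,d,\dagger}$ is the unique solution of (\ref{kj43}). I expect the only real obstacle to be the simplification $A^{3}A^{\dagger,d,\dagger}=AA^{d}$ --- in particular the step where core-EPness is consumed, via $A^{c,\dagger}=A^{d}$ (or, on the computational route, the correct use of the identities (\ref{40})); the range inclusion and the uniqueness argument are then routine.
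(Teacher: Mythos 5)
Your proposal is correct, and the existence part takes a genuinely different route from the paper. The paper never simplifies $A^{3}A^{\dagger,d,\dagger}$ to a closed form: it instead shows $\mathcal{R}(A^{3}A^{\dagger,d,\dagger})=\mathcal{R}(A^{k})$ and $\mathcal{N}(A^{3}A^{\dagger,d,\dagger})=\mathcal{N}(A^{k})$ by two chains of inclusions (core-EPness entering through $\mathcal{N}(A_{c}A^{\dagger})=\mathcal{N}(A^{\dagger}A_{c})$), and separately proves $\mathcal{R}(A^{\dagger,d,\dagger})\subseteq\mathcal{R}(A^{k})$ by commuting $A^{\dagger}$ past $A^{k}$. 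You instead collapse the product outright, $A^{3}A^{\dagger}A^{d}A^{\dagger}=A_{c}A^{\dagger}=AA^{c,\dagger}=AA^{d}$, using $A^{c,\dagger}=A^{d}$ for core-EP matrices; each step checks out, and this buys something real: equality with $AA^{d}$ immediately gives idempotency together with the right range and null space, whereas the paper's range/null-space computation alone does not literally certify that $A^{3}A^{\dagger,d,\dagger}$ is a projector (one would still need to observe, e.g., that $A_{c}A^{\dagger}$ is idempotent under core-EPness). Your range inclusion via the Hartwig--Spindelb\"ock form (in fact the equality $\mathcal{R}(A^{\dagger,d,\dagger})=\mathcal{R}((A^{d})^{3})=\mathcal{R}(A^{k})$, consistent with Theorem \ref{ehsan100}) is also fine, and your uniqueness argument is the same as the paper's, written slightly more carefully: passing through $\mathcal{N}(A^{k+3})=\mathcal{N}(A^{k})$ avoids the paper's intermediate inclusion $\mathcal{N}(A^{3})\subseteq\mathcal{N}(A^{k})$, which as stated is only literally an inclusion of null spaces when $k\geq 3$.
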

\begin{proof}
We have that
\begin{align*}
\mathcal{R}(A^{3}A^{\dagger ,d,\dagger})&=\mathcal{R}(A^{d}A^{2}A^{\dagger})\subseteq \mathcal{R}(A^{d})=\mathcal{R}(A^{k})=
\mathcal{R}(A^{d}A^{2}A^{\dagger}A^{k}) \subseteq \mathcal{R}(A^{d}A^{2}A^{\dagger})\\
\mathcal{N}(A^{3}A^{\dagger ,d,\dagger})&=\mathcal{N}(A_{c}A^{\dagger})=\mathcal{N}(A^{\dagger}A_{c})\subseteq \mathcal{N}(A^{d}A^{\dagger}A_{c})=
\mathcal{N}(A^{d})=\mathcal{N}(A^{k}) 
\\&\subseteq \mathcal{N}((A^{d})^{k}A^{k}) =\mathcal{N}(A^{d}A) \subseteq \mathcal{N}(A^{\dagger}A_{c}).
\end{align*}
By \cite{am16},  $ A $ is core-EP matrix, we have 
\begin{equation*}
\mathcal{R}(A^{\dagger ,d,\dagger})=\mathcal{R}(A^{\dagger}A^{ d}A^{\dagger})=\mathcal{R}(A^{\dagger}A^{k}(A^{ d})^{k}A^{d}A^{\dagger})
=\mathcal{R}(A^{k}A^{\dagger}(A^{ d})^{k}A^{d}A^{\dagger})\subseteq \mathcal{R}(A^{k}).
\end{equation*} 
Suppose that $Y_1, Y_2$ satisfy \eqref{kj43}. Then $A^{3}Y_1=A^{3}Y_2=P_{\mathcal{R}(A^k) , \mathcal{N}(A^k)}$, 
$\mathcal{R}(Y_1) \subseteq \mathcal{R}(A^{k})$ and $\mathcal{R}(Y_2) \subseteq \mathcal{R}(A^{k})$. Since $A^{3}(Y_1-Y_2)=0$, we get 
$\mathcal{R}(Y_1-Y_2) \subseteq \mathcal{N}(A^3)$. From $\mathcal{R}(Y_1) \subseteq \mathcal{R}(A^{k})$ and $\mathcal{R}(Y_2) \subseteq \mathcal{R}(A^{k})$ we get 
$\mathcal{R}(Y_1-Y_2) \subseteq \mathcal{R}(A^{k})$, that is\\ $\mathcal{R}(Y_1-Y_2) \subseteq \mathcal{R}(A^{k})\cap \mathcal{N}(A^3) \subseteq 
 \mathcal{R}(A^{k})\cap \mathcal{N}(A^k)=\{0\}.$ Thus, $Y_1=Y_2$.
\end{proof}
Now, we are looking for necessary and sufficient conditions for a matrix to be a core-EP matrix.
\begin{theorem}\label{ehsan100}
Let $ A\in M_{n}(\mathbb{C})$. Then $ A$ is core-EP matrix 
if and only if  \\
$ A^{\dagger ,d,\dagger}=(A^{d})^{3}$. 
\end{theorem}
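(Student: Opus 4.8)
The plan is to put $A$ in the Hartwig--Spindelb{\"o}ck form (\ref{h}), abbreviate $M=\Sigma Q$ and $B=(\Sigma Q)^{d}=M^{d}$, and compare the blocks of $A^{\dagger ,d,\dagger}$ and $(A^{d})^{3}$ (both conjugated by $U$). Throughout I use the elementary Drazin identities $MM^{d}=M^{d}M$, $B^{2}M=B$, $B^{3}M^{2}=B$ and $M^{3}B^{4}=B$, together with $\Sigma^{-1}B=QB^{2}$ (immediate after left-multiplying by $\Sigma$ and using $M(M^{d})^{2}=M^{d}$), whence also $\Sigma^{-1}B^{2}=QB^{3}$.

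The first step is to record both matrices in ``raw'' block form for an \emph{arbitrary} $A$. From (\ref{md}) one reads off $A^{d}=U\left(\begin{smallmatrix}B&B^{2}\Sigma P\\0&0\end{smallmatrix}\right)U^{*}$, hence $(A^{d})^{3}=U\left(\begin{smallmatrix}B^{3}&B^{4}\Sigma P\\0&0\end{smallmatrix}\right)U^{*}$. For $A^{\dagger ,d,\dagger}=A^{\dagger}A^{d}A^{\dagger}$ I multiply out the blocks via (\ref{md}), replace $PP^{*}$ by $I_{r}-QQ^{*}$ using (\ref{KL}), and eliminate the resulting cross terms with $B^{2}M=B$; this gives $A^{\dagger ,d,\dagger}=U\left(\begin{smallmatrix}Q^{*}\Sigma^{-1}B^{2}&0\\P^{*}\Sigma^{-1}B^{2}&0\end{smallmatrix}\right)U^{*}=U\left(\begin{smallmatrix}Q^{*}QB^{3}&0\\P^{*}QB^{3}&0\end{smallmatrix}\right)U^{*}$ after using $\Sigma^{-1}B^{2}=QB^{3}$. (This is exactly (\ref{WMM2}), since $\hat{Q}=QB$ and $\hat{Q}((\Sigma Q)^{d})^{2}=QB^{3}$; in particular (\ref{WMM2}) holds for arbitrary $A$, not only in the core-EP case.)

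For the implication ($\Rightarrow$): if $A$ is core-EP then \cite[Lemma 3.2]{am16} gives (\ref{40}), i.e. $Q^{*}\hat{Q}=B$, $P^{*}\hat{Q}=0$ and $B\Sigma P=0$; equivalently $Q^{*}QB^{3}=B^{3}$, $P^{*}QB^{3}=0$ and $B^{2}\Sigma P=0$. Substituting these into the two raw forms collapses both to $U\left(\begin{smallmatrix}B^{3}&0\\0&0\end{smallmatrix}\right)U^{*}$, so $A^{\dagger ,d,\dagger}=(A^{d})^{3}$. For the implication ($\Leftarrow$): assuming $A^{\dagger ,d,\dagger}=(A^{d})^{3}$ and comparing the blocks of the two raw forms yields $Q^{*}QB^{3}=B^{3}$, $P^{*}QB^{3}=0$ and $B^{4}\Sigma P=0$. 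Right-multiplying the first two equalities by $M^{2}$ and using $B^{3}M^{2}=B$ gives $Q^{*}QB=B$ and $P^{*}QB=0$; left-multiplying the third by $M^{3}$ and using $M^{3}B^{4}=B$ gives $B\Sigma P=0$. Since $\hat{Q}=QB$, these three equalities are precisely (\ref{40}), hence $A$ is core-EP by \cite[Lemma 3.2]{am16}.

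The only genuinely computational step is the block evaluation of $A^{\dagger}A^{d}A^{\dagger}$ for a general (not necessarily core-EP) matrix, where the $P$-block cross term must be handled through $QQ^{*}+PP^{*}=I_{r}$ and cancelled via $B^{2}M=B$; once that formula is available, both directions are bookkeeping with the identities listed above. The only other point to watch is that \cite[Lemma 3.2]{am16} is invoked as an \emph{equivalence} --- the relations (\ref{40}) characterize core-EP matrices, and are not merely consequences of core-EPness.
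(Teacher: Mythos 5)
Your proposal is correct and follows essentially the same route as the paper: both express $A^{\dagger ,d,\dagger}$ and $(A^{d})^{3}$ in Hartwig--Spindelb\"ock block form (equations (\ref{WMM2}) and (\ref{ehsan78})), equate blocks, and show the resulting three conditions are equivalent to the characterization (a)--(c) of \cite[Lemma 3.2]{am16} by multiplying by suitable powers of $\Sigma Q$. Your explicit remarks --- that (\ref{WMM2}) is valid for arbitrary $A$ and that the cited lemma is used as an equivalence --- are points the paper leaves implicit but does rely on.
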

\begin{proof}
By (\ref{md}), we have that
\begin{equation}\label{ehsan78}
(A^{d})^{3} = U \left( \begin{array}{cc}
\left((\Sigma Q)^{d}\right)^{3} & \left((\Sigma Q)^{d}\right)^{4}\Sigma P \\ 
0 & 0
\end{array} \right) U^{*}. 
\end{equation}
By (\ref{WMM2}) and (\ref{ehsan78}), the equality $ A^{\dagger ,d,\dagger}=(A^{d})^{3}$ if and only if the  following conditions hold:
\begin{align}
Q^{*}\tilde{\Sigma}&= \left((\Sigma Q)^{d}\right)^{3},\label{30k}
\\\left((\Sigma Q)^{d}\right)^{4}\Sigma P  &= 0 ,\label{31k}
\\P^{*}\tilde{\Sigma}&= 0,\label{32k}   
\end{align}
By \cite[Lemma 3.2]{am16}, $A$ is a core-EP matrix
if and only if the following conditions hold: 
\begin{equation*}
(a) Q^{*}\hat{Q} = \left( \Sigma Q \right)^{d},\\\quad
(b) P^{*}\hat{Q} = 0, \\ \quad 
(c) \left( \Sigma Q \right)^{d} \Sigma P = 0. 
\end{equation*}
By right-multiplying the equations (\ref{30k}) and (\ref{32k}) 
by $ (\Sigma Q)^{2} $, 
we get that the equations (\ref{30k}) and (\ref{32k}) are equivalent to the equations (a) and (b). Pre-multiplying the equalities (\ref{31k}) by $ (\Sigma Q)^{3} $,
we get $ ( \Sigma Q )^{d}  \Sigma P = 0, $ which gives (c)  and the result hold. 
\end{proof}
Employing a similar method as in the proof of Theorem \ref{ehsan100}, \eqref{h},\eqref{KL}, \eqref{cm} and  (\ref{mdhkp}),  the following holds.
\begin{corollary}\label{JKW5}
Suppose that  $ A\in M_{n}(\mathbb{C}) $. Then $A$ is a core-EP matrix 
if and only if $ A^{ \dagger ,d,\dagger}A^{d ,\dagger}=(A^{d})^{4}$ if and only if  
$ A^{\dagger ,d,\dagger}A=AA^{\dagger ,d,\dagger}$ if and only if  $ A^{\dagger ,d,\dagger}A_c=A_cA^{\dagger ,d,\dagger}$.
\end{corollary}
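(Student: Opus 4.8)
The plan is to reduce each of the three asserted equivalences to the Hartwig--Spindelb\"ock form \eqref{h} and to run the argument of Theorem \ref{ehsan100} three times over. So write $A$ as in \eqref{h}, put $B:=\Sigma Q$, $E:=BB^{d}=B^{d}B$ and $\hat Q:=QB^{d}$, and recall the elementary identities $BB^{d}B=B$, $B^{d}BB^{d}=B^{d}$, $B^{d}E=EB^{d}=B^{d}$, $B^{j}(B^{d})^{j}=(B^{d})^{j}B^{j}=E$ and $EB^{j}=B^{j}E=B^{j}$ for every $j\geqslant1$ (hence $(B^{d})^{i}B^{j}=(B^{d})^{i-j}$ and $B^{i}(B^{d})^{j}=B^{i-j}$ for $i>j\geqslant1$). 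By \cite[Lemma 3.2]{am16}, $A$ is core-EP if and only if
\[
\text{(a) } Q^{*}\hat Q=B^{d},\qquad \text{(b) } P^{*}\hat Q=0,\qquad \text{(c) } B^{d}\Sigma P=0 .
\]
Using \eqref{md} one gets inductively $(A^{d})^{j}=U\left(\begin{smallmatrix}(B^{d})^{j}&(B^{d})^{j+1}\Sigma P\\0&0\end{smallmatrix}\right)U^{*}$, and using \eqref{cm}, \eqref{mdhkp}, \eqref{WMM2}, $\tilde\Sigma=\hat Q(B^{d})^{2}=Q(B^{d})^{3}$ and $QQ^{*}+PP^{*}=I_{r}$ (\eqref{KL}), a direct block multiplication gives
\[
A^{\dagger,d,\dagger}A^{d,\dagger}=U\begin{pmatrix}Q^{*}Q(B^{d})^{4}&0\\P^{*}Q(B^{d})^{4}&0\end{pmatrix}U^{*},\qquad
AA^{\dagger,d,\dagger}=U\begin{pmatrix}(B^{d})^{2}&0\\0&0\end{pmatrix}U^{*},
\]
\[
A^{\dagger,d,\dagger}A=U\begin{pmatrix}Q^{*}Q(B^{d})^{2}&Q^{*}Q(B^{d})^{3}\Sigma P\\P^{*}Q(B^{d})^{2}&P^{*}Q(B^{d})^{3}\Sigma P\end{pmatrix}U^{*} .
\]

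For the equivalence with $A^{\dagger,d,\dagger}A^{d,\dagger}=(A^{d})^{4}$, comparing the block forms shows this equality amounts to the three identities $Q^{*}Q(B^{d})^{4}=(B^{d})^{4}$, $P^{*}Q(B^{d})^{4}=0$ and $(B^{d})^{5}\Sigma P=0$. Right-multiplying the first two by $B^{3}$ (and noting $(B^{d})^{4}B^{3}=B^{d}$) converts them into (a) and (b), and left-multiplying the third by $B^{4}$ (noting $B^{4}(B^{d})^{5}=B^{d}$) converts it into (c); conversely, right-multiplying (a),(b) by $(B^{d})^{3}$ and left-multiplying (c) by $(B^{d})^{4}$ recovers the three identities --- exactly the mechanism of Theorem \ref{ehsan100}. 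For the equivalence with $A^{\dagger,d,\dagger}A=AA^{\dagger,d,\dagger}$, comparing the block forms gives the identities $Q^{*}Q(B^{d})^{2}=(B^{d})^{2}$, $P^{*}Q(B^{d})^{2}=0$, $Q^{*}Q(B^{d})^{3}\Sigma P=0$ and $P^{*}Q(B^{d})^{3}\Sigma P=0$; right-multiplying the first two by $B$ gives (a) and (b), and once (a) is available the third identity becomes $(B^{d})^{3}\Sigma P=0$, which yields (c) after left-multiplication by $B^{2}$, while the fourth identity is then automatic; the reverse implications are routine substitutions of (a),(b),(c).

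For the equivalence with $A^{\dagger,d,\dagger}A_{c}=A_{c}A^{\dagger,d,\dagger}$ I would avoid a fourth block computation and instead observe that $A^{\dagger,d,\dagger}A_{n}=A_{n}A^{\dagger,d,\dagger}=0$ for every $A$, where $A_{n}=A-A_{c}$. Indeed, the first theorem of this section gives $A^{d}A^{\dagger}A=A^{d}$ and $AA^{\dagger}A^{d}=A^{d}$; hence $A^{d}A^{\dagger}A_{c}=(A^{d}A^{\dagger}A)A^{d}A=(A^{d})^{2}A=A^{d}$ and, symmetrically, $A_{c}A^{\dagger}A^{d}=A^{d}$, so $A^{d}A^{\dagger}A_{n}=A^{d}-A^{d}=0$ and $A_{n}A^{\dagger}A^{d}=0$; multiplying by $A^{\dagger}$ on the outside gives $A^{\dagger,d,\dagger}A_{n}=A^{\dagger}(A^{d}A^{\dagger}A_{n})=0$ and $A_{n}A^{\dagger,d,\dagger}=(A_{n}A^{\dagger}A^{d})A^{\dagger}=0$. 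Consequently $A^{\dagger,d,\dagger}A=A^{\dagger,d,\dagger}A_{c}$ and $AA^{\dagger,d,\dagger}=A_{c}A^{\dagger,d,\dagger}$ identically, so the condition $A^{\dagger,d,\dagger}A_{c}=A_{c}A^{\dagger,d,\dagger}$ coincides with $A^{\dagger,d,\dagger}A=AA^{\dagger,d,\dagger}$, already treated. (Alternatively, one may compute $A^{\dagger,d,\dagger}A_{c}$ and $A_{c}A^{\dagger,d,\dagger}$ directly in block form, using $\Sigma\hat Q=E$, $\Sigma\hat Q\Sigma Q=\Sigma Q$ and $QQ^{*}+PP^{*}=I_{r}$, and check that they reproduce the block forms of $A^{\dagger,d,\dagger}A$ and $AA^{\dagger,d,\dagger}$ above.)

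The only delicate point is the arithmetic with $B^{d}$ when $B=\Sigma Q$ is singular: one cannot cancel $B$ against $B^{d}$ but only against the idempotent $E=BB^{d}$, so the whole argument rests on choosing, for each candidate block identity, the exact power of $B$ by which to multiply so as to flush out $E$ and land precisely on (a), (b), (c), and on checking that each such multiplication is reversible. Granting the block formulas above, everything else is routine $2\times2$ block bookkeeping.
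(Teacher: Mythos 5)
Your proof is correct and, for the first two equivalences, it is exactly the block computation the paper has in mind: the paper gives no details beyond ``employ the method of Theorem \ref{ehsan100}'', and your block formulas for $A^{\dagger,d,\dagger}A^{d,\dagger}$, $(A^{d})^{4}$, $A^{\dagger,d,\dagger}A$ and $AA^{\dagger,d,\dagger}$, together with the reductions to conditions (a), (b), (c) of \cite[Lemma 3.2]{am16} by multiplying with suitable powers of $\Sigma Q$ and $(\Sigma Q)^{d}$, all check out in both directions. For the third equivalence you genuinely improve on the intended route (which, judging by the citation of \eqref{cm}, is yet another block computation with $A_{c}$): the observation that $A^{\dagger,d,\dagger}A_{n}=A_{n}A^{\dagger,d,\dagger}=0$ holds unconditionally, so that commuting with $A_{c}$ is literally the same condition as commuting with $A$, is cleaner, representation-free, and makes the equivalence of items (iv) and (v) of Corollary \ref{KheiEHSA7889} transparent. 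Two small caveats, neither of which damages the argument: in your preliminary list of ``elementary identities'', $B^{j}E=B^{j}$ for all $j\geqslant 1$ and $B^{i}(B^{d})^{j}=B^{i-j}$ for $i>j$ are false when $j$ (resp.\ $i-j$) is smaller than $\mathrm{Ind}(B)$ --- take $B$ nilpotent --- but you never actually invoke them, since every cancellation you perform is of the form $(B^{d})^{i}B^{j}$ with $i>j$ or $B^{i}(B^{d})^{j}$ with $j\geqslant i$, where enough factors of $B^{d}$ remain to absorb the idempotent $E$; and in the parenthetical alternative for the third part, $\Sigma\hat{Q}\Sigma Q$ equals the core part of $\Sigma Q$, not $\Sigma Q$ itself, so that route would need repair --- fortunately it is not the one you take.
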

\begin{theorem}\label{ehsan36}
Suppose that $ A\in M_{n}(\mathbb{C})$. Then $A$ is a core-EP matrix  if and only if 
$A^{\dagger ,d,\dagger}A^{d}=A^{d}A^{\dagger ,d,\dagger}$.
\end{theorem}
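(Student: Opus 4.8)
My plan rests on a reduction. First I record the identity $A^{d}A^{\dagger}A^{d}=(A^{d})^{3}$, valid for every $A\in M_{n}(\mathbb C)$: since $A^{d}=(A^{d})^{2}A$ (from $A^{d}AA^{d}=A^{d}$ and $A^{d}A=AA^{d}$) and $AA^{\dagger}A^{d}=A^{d}$ (because $AA^{\dagger}$ is the orthogonal projector onto $\mathcal R(A)$, while $A^{d}=A(A^{d})^{2}$ has all its columns in $\mathcal R(A)$), one gets $A^{d}A^{\dagger}A^{d}=(A^{d})^{2}(AA^{\dagger})A^{d}=(A^{d})^{3}$. Substituting $A^{\dagger,d,\dagger}=A^{\dagger}A^{d}A^{\dagger}$ this gives, for every $A$,
\[
A^{\dagger,d,\dagger}A^{d}=A^{\dagger}(A^{d}A^{\dagger}A^{d})=A^{\dagger}(A^{d})^{3},\qquad A^{d}A^{\dagger,d,\dagger}=(A^{d}A^{\dagger}A^{d})A^{\dagger}=(A^{d})^{3}A^{\dagger},
\]
so the statement is equivalent to: $A$ is core-EP if and only if $A^{\dagger}$ commutes with $(A^{d})^{3}$. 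The "only if" direction is then immediate: if $A$ is core-EP, then $A^{\dagger}A_{c}=A_{c}A^{\dagger}$ by definition, and since $A^{d}=A^{c,\dagger}=A^{\dagger}A_{c}A^{\dagger}$ for core-EP matrices (\cite[Theorem~3.3]{am16}), a short computation yields $A^{\dagger}A^{d}=A^{d}A^{\dagger}$, so $A^{\dagger}$ commutes with every power of $A^{d}$, in particular with $(A^{d})^{3}$.

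For the "if" direction I would use the Hartwig--Spindelb{\"o}ck decomposition \eqref{h} together with the block forms \eqref{md} of $A^{\dagger}$ and \eqref{ehsan78} of $(A^{d})^{3}$, writing $D:=\Sigma Q$. Equating the four blocks of $A^{\dagger}(A^{d})^{3}$ and $(A^{d})^{3}A^{\dagger}$ produces four matrix identities in $Q,P,\Sigma,D^{d}$. From the $(2,1)$ block, right-multiplying by $D^{2}$ and then by $D$ and using the Drazin relations $D^{d}DD^{d}=D^{d}$, $D^{d}D=DD^{d}$ and $\Sigma^{-1}D=Q$, I would obtain $P^{*}\hat Q=0$. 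From the $(1,1)$ block, right-multiplying by $D^{2}$, substituting $PP^{*}=I_{r}-QQ^{*}$ from \eqref{KL}, and collapsing the powers of $D^{d}$, the two terms on the right telescope (the $Q^{*}Q$ contributions cancel) and leave $Q^{*}\Sigma^{-1}D^{d}=(D^{d})^{2}$; a further right-multiplication by $D$ gives $Q^{*}\hat Q=(\Sigma Q)^{d}$. Finally, iterating $Q^{*}\Sigma^{-1}D^{d}=(D^{d})^{2}$ and feeding it into the $(1,2)$ block gives $(D^{d})^{5}\Sigma P=0$, whence $(\Sigma Q)^{d}\Sigma P=0$ after pre-multiplying by $D^{4}$. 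By \cite[Lemma~3.2]{am16}, the three equalities $Q^{*}\hat Q=(\Sigma Q)^{d}$, $P^{*}\hat Q=0$, $(\Sigma Q)^{d}\Sigma P=0$ characterize core-EP matrices, so $A$ is core-EP.

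The main obstacle is the "if" direction, concretely the $(1,1)$ block: its right-hand side couples a conjugation of $Q^{*}\Sigma^{-1}$ by $(D^{d})^{3}$ with a term carrying $PP^{*}$, and the $Q^{*}Q$ terms cancel only after the correct normalization by powers of $D$ and the substitution $PP^{*}=I_{r}-QQ^{*}$; in particular one cannot simply cancel the factor $Q$, since $Q^{*}Q\ne I_{r}$ in general.
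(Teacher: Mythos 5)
Your proposal is correct, and it overlaps substantially with the paper's proof while adding a genuinely useful reduction. The paper proves both directions at once by computing $A^{\dagger,d,\dagger}A^{d}$ and $A^{d}A^{\dagger,d,\dagger}$ in the Hartwig--Spindelb\"ock blocks (its equations \eqref{297}--\eqref{301aa}) and matching the resulting four conditions against \cite[Lemma~3.2]{am16}. Your identity $A^{d}A^{\dagger}A^{d}=(A^{d})^{3}$ is valid (the key point, which you state correctly, is that $AA^{\dagger}A^{d}=A^{d}$ because $A^{d}=A(A^{d})^{2}$ has range inside $\mathcal R(A)$), and it shows that the commutation condition of the theorem is exactly $[A^{\dagger},(A^{d})^{3}]=0$; since $\tilde{\Sigma}=\hat{Q}((\Sigma Q)^{d})^{2}=Q((\Sigma Q)^{d})^{3}$ and $\Sigma^{-1}((\Sigma Q)^{d})^{3}=Q((\Sigma Q)^{d})^{4}$, your four block equations for $A^{\dagger}(A^{d})^{3}=(A^{d})^{3}A^{\dagger}$ are literally the paper's \eqref{299}--\eqref{301aa}, and your normalizations by powers of $D=\Sigma Q$ (right-multiplying by $D^{2}$ and $D$, using $PP^{*}=I_{r}-QQ^{*}$ to telescope the $(1,1)$ block to $(D^{d})^{4}$, and pre-multiplying $(D^{d})^{5}\Sigma P=0$ by $D^{4}$) all check out, so the ``if'' direction is sound and is essentially the paper's argument. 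Where you genuinely diverge is the ``only if'' direction: instead of verifying the block identities under (a)--(c), you argue coordinate-free that core-EPness gives $A^{\dagger}A^{d}=A^{d}A^{\dagger}$ (indeed $A^{\dagger}A^{d}=A^{\dagger}(A^{\dagger}A_{c})A^{\dagger}=A^{\dagger}(A_{c}A^{\dagger})A^{\dagger}=A^{d}A^{\dagger}$ using $A^{d}=A^{\dagger}A_{c}A^{\dagger}$ from \cite[Theorem~3.3]{am16}; alternatively, $A^{d}=A_{c}^{\#}$ is a polynomial in $A_{c}$ without constant term, so it inherits commutation with $A^{\dagger}$). This is cleaner than the paper's implicit reverse implication and has the added benefit of exposing what the theorem really says: $A$ is core-EP exactly when $A^{\dagger}$ commutes with $(A^{d})^{3}$, even though commuting with a cube does not in general force commuting with $A^{d}$ itself. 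The only cosmetic weakness is that ``a short computation yields $A^{\dagger}A^{d}=A^{d}A^{\dagger}$'' deserves the one-line display above rather than a hand-wave, but the claim is true and easy.
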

\begin{proof}
By using (\ref{md}) and (\ref{WMM2}), we get
\begin{align}\label{297}
A^{\dagger ,d,\dagger}A^{d}&=U \left( \begin{array}{cc}
Q^{*}\tilde{\Sigma}(\Sigma Q)^{d} &Q^{*}\tilde{\Sigma}((\Sigma Q)^{d})^{2}\Sigma P\\ 
P^{*}\tilde{\Sigma}(\Sigma Q)^{d} & P^{*}\tilde{\Sigma}((\Sigma Q)^{d})^{2}\Sigma P
\end{array} \right) U^{*}, \\
A^{d}A^{\dagger ,d,\dagger}&=U \left( \begin{array}{cc}
((\Sigma Q)^{d})^{4} & 0 \\ 
0 & 0
\end{array} \right) U^{*}. \label{298}
\end{align}
By (\ref{297}) and (\ref{298}), the equality $ A^{\dagger ,d,\dagger}A^{d}=A^{d}A^{\dagger ,d,\dagger}$ holds if and only if
the following conditions fulfill: 
\begin{align}
Q^{*}\tilde{\Sigma}(\Sigma Q)^{d} &= ((\Sigma Q)^{d})^{4},\label{299}
\\Q^{*}\tilde{\Sigma}((\Sigma Q)^{d})^{2}\Sigma P &= 0,\label{300a} 
\\P^{*}\tilde{\Sigma}(\Sigma Q)^{d} &= 0 ,\label{301a}
\\P^{*}\tilde{\Sigma}((\Sigma Q)^{d})^{2}\Sigma P &= 0 ,\label{301aa}
\end{align}
By \cite[Lemma 3.2]{am16}, $A$ is a core-EP matrix
if and only if the following conditions hold: 
\begin{equation*}\label{a20} 
(a) Q^{*}\hat{Q} = \left( \Sigma Q \right)^{d},\\\quad
(b) P^{*}\hat{Q} = 0, \\ \quad 
(c) \left( \Sigma Q \right)^{d} \Sigma P = 0. 
\end{equation*}
Right-multiplying the equation (\ref{299}) and (\ref{301a}) by $(\Sigma Q)^{3} $, respectively, we arrive at 
(a) and (b).
Since $ QQ^{*}+PP^{*}=I_{r},$ we can pre-multiply equations (\ref{300a}) and (\ref{301aa}) by $ (\Sigma Q)^{4} $ and $ (\Sigma Q)^{3}\Sigma P $, respectively, This results in 
$ (\Sigma Q)^{d}\Sigma P=0,$ that is, (c) is equivalent to (\ref{300a}) and (\ref{301aa}) and the result hold.
\end{proof}
Employing  a similar method as in the proof of Theorem \ref{ehsan36} and  (\ref{mdhkp}),  the following holds.
\begin{corollary}\label{JKW59}
Suppose that  $ A\in M_{n}(\mathbb{C}) $. Then $A$ is a core-EP matrix 
 if and only if $ A^{\dagger ,d,\dagger}A^{d}=(A^{d ,\dagger})^{4}.$
\end{corollary}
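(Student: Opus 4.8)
The plan is to obtain the corollary from Theorem~\ref{ehsan36} by first proving that
\[
(A^{d,\dagger})^{4}=A^{d}A^{\dagger ,d,\dagger}
\]
holds for \emph{every} $A\in M_{n}(\mathbb{C})$, with no core-EP hypothesis whatsoever. Once this is established, the equation $A^{\dagger ,d,\dagger}A^{d}=(A^{d,\dagger})^{4}$ becomes word for word the equation $A^{\dagger ,d,\dagger}A^{d}=A^{d}A^{\dagger ,d,\dagger}$, which by Theorem~\ref{ehsan36} holds if and only if $A$ is a core-EP matrix. Thus the whole content of the statement reduces to that unconditional identity.

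To prove the identity I would follow the pattern of the proof of Theorem~\ref{ehsan36}. Writing $A$ in the Hartwig--Spindelb\"{o}ck form \eqref{h}, relation \eqref{mdhkp} exhibits $A^{d,\dagger}$ as block-diagonal relative to $U$ with the single nonzero block $(\Sigma Q)^{d}$; hence $(A^{d,\dagger})^{4}$ is block-diagonal with the single nonzero block $\left((\Sigma Q)^{d}\right)^{4}$, and this is precisely the matrix appearing on the right-hand side of \eqref{298}, i.e.\ it equals $A^{d}A^{\dagger ,d,\dagger}$. A coordinate-free check is equally short: since $\mathcal{R}(A^{d})=\mathcal{R}(A^{k})\subseteq\mathcal{R}(A)$ one has $AA^{\dagger}A^{d}=A^{d}$, and the Drazin equations give $(A^{d})^{2}A=A^{d}$; feeding these in, $A^{d}A^{\dagger ,d,\dagger}=A^{d}A^{\dagger}A^{d}A^{\dagger}=\left((A^{d})^{2}A\right)A^{\dagger}A^{d}A^{\dagger}=(A^{d})^{2}\left(AA^{\dagger}A^{d}\right)A^{\dagger}=(A^{d})^{3}A^{\dagger}$, while iterating $AA^{\dagger}A^{d}=A^{d}$ in $A^{d,\dagger}=A^{d}AA^{\dagger}$ gives $(A^{d,\dagger})^{4}=(A^{d})^{4}AA^{\dagger}=(A^{d})^{3}A^{\dagger}$, so the two sides coincide.

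Combining this with Theorem~\ref{ehsan36} closes the argument, exactly as indicated in the first paragraph. I do not foresee a real obstacle here: the one point that genuinely needs attention is that $(A^{d,\dagger})^{4}=A^{d}A^{\dagger ,d,\dagger}$ requires \emph{no} assumption on $A$ — this is precisely what lets the corollary inherit its proof from Theorem~\ref{ehsan36} — and that fact is settled either by the short block computation above or by the three-line algebraic manipulation. All remaining ingredients are already recorded in the excerpt.
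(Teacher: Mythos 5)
Your proposal is correct, and it is tighter than what the paper actually records. The paper's ``proof'' is a one-line pointer: ``employing a similar method as in the proof of Theorem~\ref{ehsan36} and (\ref{mdhkp})'', which most naturally means redoing the block computation --- write $A^{\dagger ,d,\dagger}A^{d}$ and $(A^{d,\dagger})^{4}$ in Hartwig--Spindelb\"ock coordinates, equate the four blocks, and show the resulting system is equivalent to conditions (a)--(c) of \cite[Lemma 3.2]{am16}. You instead isolate the unconditional identity $(A^{d,\dagger})^{4}=A^{d}A^{\dagger ,d,\dagger}\,(=(A^{d})^{3}A^{\dagger})$ and then invoke Theorem~\ref{ehsan36} as a black box, so the corollary becomes a literal restatement of that theorem. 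Both of your verifications of the identity are sound: the block one simply observes that $(A^{d,\dagger})^{4}$ reproduces the matrix in \eqref{298}, and the coordinate-free one correctly uses $AA^{\dagger}A^{d}=A^{d}$ (valid because $\mathcal{R}(A^{d})=\mathcal{R}(A^{k})\subseteq\mathcal{R}(A)$) and $(A^{d})^{2}A=A^{d}$. What your route buys is a clean separation of the content: the only nontrivial input is Theorem~\ref{ehsan36}, the rest is an identity true for every square matrix, and the coordinate-free computation even dispenses with the decomposition \eqref{h} entirely; what the paper's route buys is uniformity of method across the whole battery of equivalences in Corollary~\ref{KheiEHSA7889}. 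No gap.
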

\begin{corollary}\label{KheiEHSA7889}
Suppose that  $ A \in M_{n}(\mathbb{C}) $. Then 
the following are equivalent: 
\begin{enumerate}[label=(\roman*)]
\item
$ A$ is a core-EP matrix,                   
\item
$A^{\dagger ,d,\dagger}=(A^{d})^{3}, $ 
$\hfill$ by Theorem \ref{ehsan100},
\item
$ A^{ \dagger ,d,\dagger}A^{d ,\dagger}=(A^{d})^{4}$,   
$\hfill$ by Corollary \ref{JKW5},
\item
$A^{\dagger ,d,\dagger}A=AA^{\dagger ,d,\dagger}$, 
$\hfill$ by Corollary \ref{JKW5},
\item
$A^{\dagger ,d,\dagger}A_{c}=A_{c}A^{\dagger ,d,\dagger}, $ 
$\hfill$ by Corollary \ref{JKW5},
\item
$A^{\dagger ,d,\dagger}A^{d}=A^{d}A^{\dagger ,d,\dagger}$, 
$\hfill$ by Theorem \ref{ehsan36},
\item
$ A^{\dagger ,d,\dagger}A^{d}=(A^{d ,\dagger})^{4}$, 
$\hfill$ by Corollary \ref{JKW59}.
\end{enumerate}
\end{corollary}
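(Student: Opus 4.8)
The plan is to obtain this corollary purely by assembling the equivalences already proved in Section 2, since every one of the listed conditions has been individually compared with core-EPness. Concretely: (i) $\Leftrightarrow$ (ii) is Theorem \ref{ehsan100}; (i) $\Leftrightarrow$ (iii), (i) $\Leftrightarrow$ (iv) and (i) $\Leftrightarrow$ (v) are the three clauses of Corollary \ref{JKW5}; (i) $\Leftrightarrow$ (vi) is Theorem \ref{ehsan36}; and (i) $\Leftrightarrow$ (vii) is Corollary \ref{JKW59}. Chaining these through the common statement (i) immediately yields that (i)--(vii) are all equivalent, with no further computation needed; this is exactly the bookkeeping recorded by the right-hand annotations in the statement.

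For completeness I would also recall the uniform mechanism underlying those cited results, in case a reader wants a direct verification. In each case one writes $A$ in its Hartwig--Spindelb{\"o}ck form \eqref{h}, normalized by $QQ^{*}+PP^{*}=I_{r}$ as in \eqref{KL}, and expresses $A^{\dagger ,d,\dagger}$ through \eqref{WMM2} together with $A^{d}$, $A^{d,\dagger}$, $A^{\dagger ,d}$, $A_{c}$ and their powers from \eqref{md}, \eqref{cm}, \eqref{mdhkp}. Each of the seven conditions then becomes a system of $2\times 2$ block equations in $Q$, $P$, $\Sigma$ and $(\Sigma Q)^{d}$ (with $\widehat Q = Q(\Sigma Q)^{d}$ and $\tilde\Sigma = \widehat Q((\Sigma Q)^{d})^{2}$). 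One shows that each such system is equivalent, by right- or left-multiplying the blocks by suitable powers $(\Sigma Q)^{j}$ or by $\Sigma P$ and using $QQ^{*}+PP^{*}=I_{r}$, to the triple of conditions (a) $Q^{*}\widehat Q = (\Sigma Q)^{d}$, (b) $P^{*}\widehat Q = 0$, (c) $(\Sigma Q)^{d}\Sigma P = 0$, which by \cite[Lemma 3.2]{am16} characterizes core-EP matrices.

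The only point requiring care --- and the step I expect to be the main obstacle in a from-scratch proof --- is checking that those multiplications are reversible, i.e.\ that after multiplying a block identity by $(\Sigma Q)^{j}$ (or by $(\Sigma Q)^{j}\Sigma P$) one can recover the original identity. This hinges on the absorption identities for $(\Sigma Q)^{d}$, $\widehat Q$ and $\tilde\Sigma$ under multiplication by powers of $\Sigma Q$, and on the normalization \eqref{KL}; these are precisely the manipulations already carried out in the proofs of Theorems \ref{ehsan100} and \ref{ehsan36}. Since all of the needed equivalences are in hand, I would present the corollary simply as a consolidation, citing the source of each equivalence as indicated in the statement.
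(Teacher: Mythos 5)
Your proposal matches the paper exactly: the corollary is stated there with no separate proof, each equivalence being justified by citing Theorem \ref{ehsan100}, Corollary \ref{JKW5}, Theorem \ref{ehsan36}, and Corollary \ref{JKW59}, all of which compare the respective condition with (i), so chaining through (i) is the whole argument. Your supplementary description of the underlying Hartwig--Spindelb\"ock block computations is also faithful to how those cited results are actually proved in the paper.
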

In what follows, we are looking for equivalent conditions such that $ A^{c,\dagger } $ is an EP matrix.
\begin{lemma}\label{DD}
Let $ A \in M_{n}(\mathbb{C}) $ be as in (\ref{h}). If 
$\Delta P=0$ 
then $P^*\Delta Q=0$, where \\
$\Delta=\hat{Q}(\hat{Q})^{\dagger}$.
\end{lemma}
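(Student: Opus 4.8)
The plan is to work within the Hartwig--Spindelb\"ock coordinates already set up in~\eqref{h} and~\eqref{cm}, and to exploit the symmetry of the orthogonal projector $\Delta=\hat Q(\hat Q)^{\dagger}$, namely $\Delta=\Delta^{*}=\Delta^{2}$ and $\mathcal{R}(\Delta)=\mathcal{R}(\hat Q)$. First I would record what the hypothesis $\Delta P=0$ says about column spaces: it means $\mathcal{R}(P)\subseteq\mathcal{N}(\Delta)$, equivalently (taking conjugate transposes and using $\Delta^{*}=\Delta$) that $P^{*}\Delta=0$. Once $P^{*}\Delta=0$ is in hand, the desired conclusion $P^{*}\Delta Q=0$ is immediate by right-multiplying by $Q$.

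So the real content is the step $\Delta P=0 \Rightarrow P^{*}\Delta=0$. I would argue as follows. Since $\Delta$ is Hermitian, $\Delta P=0$ gives $P^{*}\Delta^{*}=0$, i.e. $P^{*}\Delta=0$ directly — here the Hermitian (in fact orthogonal-projector) nature of $\Delta$ is exactly what makes the implication trivial, and it is the only place the specific form $\Delta=\hat Q(\hat Q)^{\dagger}$ is used. Concretely, $\bigl(\hat Q(\hat Q)^{\dagger}\bigr)^{*}=\bigl((\hat Q)^{\dagger}\bigr)^{*}\hat Q^{*}=\hat Q(\hat Q)^{\dagger}$ because $\hat Q(\hat Q)^{\dagger}$ is the orthogonal projector onto $\mathcal{R}(\hat Q)$ and such projectors are self-adjoint by the Moore--Penrose equations~\eqref{MMoo}. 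Then $P^{*}\Delta Q=\bigl(P^{*}\Delta\bigr)Q=0$.

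The only subtlety worth flagging — and what I would treat as the "main obstacle," though it is minor — is making sure one is allowed to pass from $\Delta P=0$ to the adjoint statement without losing information, i.e. confirming $\Delta^{*}=\Delta$ rather than merely $\Delta^{\dagger}=\Delta$. That is guaranteed because for any matrix $B$ the product $BB^{\dagger}$ satisfies $(BB^{\dagger})^{*}=BB^{\dagger}$ by the third Moore--Penrose equation in~\eqref{MMoo}; applying this with $B=\hat Q$ yields $\Delta^{*}=\Delta$. With that observation the lemma follows in two lines, so in the write-up I would state the projector identity $\Delta=\Delta^{*}$ explicitly, deduce $P^{*}\Delta=(\Delta P)^{*}=0$, and conclude $P^{*}\Delta Q=0$.
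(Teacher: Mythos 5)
Your proposal is correct and follows essentially the same route as the paper: observe that $\Delta=\hat Q(\hat Q)^{\dagger}$ is an orthogonal projector and hence Hermitian, take adjoints of $\Delta P=0$ to get $P^{*}\Delta=0$, and right-multiply by $Q$. The extra care you take in justifying $\Delta^{*}=\Delta$ via the Moore--Penrose equations is a harmless elaboration of the paper's ``it is clear'' step.
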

\begin{proof}
It is clear that $ \Delta = \hat{Q}(\hat{Q})^{\dagger} $ is an orthogonal projector. Thus, 
$ \Delta $ is hermitian. If 
$ \Delta P=0, $ then $ P^{*}\Delta =0 $ and this implies 
$ P^*\Delta Q=0. $
\end{proof}
\begin{theorem}\cite[Theorem 2.10]{new}\label{ADC} 
Let $ A \in M_{n}(\mathbb{C}) $ be as in (\ref{h}) and $ \Delta = \hat{Q}(\hat{Q})^{\dagger}$. Then $ A^{c,\dagger } $ is an EP matrix  if and only if 
\begin{enumerate}[label=(\roman*)]
\item
$Q^{*}\Delta Q=(\hat{Q})^{\dagger}\hat{Q}$,
\item
$\Delta P=0$,
\item
$P^*\Delta Q=0.$
\end{enumerate}
\end{theorem}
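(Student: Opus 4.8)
The plan is to bring $A^{c,\dagger}$ into Hartwig--Spindelb\"ock coordinates, recognise there when the resulting block matrix is EP, and then rephrase the block conditions through the projector $\Delta$. First I would compute $A^{c,\dagger}=A^{\dagger}A_{c}A^{\dagger}$ by inserting the blocks of $A^{\dagger}$ and $A_{c}$ from (\ref{md}) and (\ref{cm}); in the product the inner factors take the shape $\Sigma(QQ^{*}+PP^{*})\Sigma^{-1}$, which collapses to $I_{r}$ by (\ref{KL}), leaving
\begin{equation*}
A^{c,\dagger}=U\left(\begin{array}{cc} Q^{*}\hat{Q} & 0\\ P^{*}\hat{Q} & 0 \end{array}\right)U^{*}.
\end{equation*}
Since conjugation by the unitary $U$ preserves EP-ness, it suffices to decide when the block matrix $N:=\left(\begin{array}{cc} Q^{*}\hat{Q} & 0\\ P^{*}\hat{Q} & 0\end{array}\right)$ is EP, i.e.\ when $\mathcal{R}(N)=\mathcal{R}(N^{*})$.

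Next I would extract conditions (ii) and (iii). The range of $N^{*}$ lies inside the coordinate subspace $\mathbb{C}^{r}\times\{0\}$, whereas $\mathcal{R}(N)$ is the set of vectors whose first block is $Q^{*}\hat{Q}x$ and whose second block is $P^{*}\hat{Q}x$, as $x$ ranges over $\mathbb{C}^{r}$; hence $\mathcal{R}(N)=\mathcal{R}(N^{*})$ forces $P^{*}\hat{Q}=0$. Because $\Delta=\hat{Q}\hat{Q}^{\dagger}$ is the orthogonal projector onto $\mathcal{R}(\hat{Q})$, the identity $P^{*}\hat{Q}=0$ is exactly $\mathcal{R}(\hat{Q})\subseteq\mathcal{R}(P)^{\perp}$, i.e.\ $\Delta P=0$ (condition (ii)), and then $P^{*}\Delta Q=0$ (condition (iii)) follows at once from Lemma \ref{DD}. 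Conversely, once $\Delta P=0$ holds we recover $P^{*}\hat{Q}=0$, so $N$ reduces to the block-diagonal matrix with diagonal blocks $Q^{*}\hat{Q}$ and $0$, which is EP if and only if $G:=Q^{*}\hat{Q}$ is EP.

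It then remains to show that, under $\Delta P=0$, the matrix $G=Q^{*}\hat{Q}$ is EP if and only if condition (i) holds. The key is the structural identity $\hat{Q}=QQ^{*}\hat{Q}$, which comes from $PP^{*}\hat{Q}=0$ and (\ref{KL}); it yields $QQ^{*}\Delta=\Delta$ and, after taking adjoints, $\Delta QQ^{*}=\Delta$. From these one checks: (a) $Q^{*}\Delta Q$ is an orthogonal projector, being Hermitian and, by $\Delta QQ^{*}\Delta=\Delta$, idempotent; (b) $\mathcal{R}(Q^{*}\Delta Q)=\mathcal{R}(Q^{*}\Delta)=\mathcal{R}(Q^{*}\hat{Q})=\mathcal{R}(G)$, via $Q^{*}\Delta=(Q^{*}\Delta Q)Q^{*}$ and $Q^{*}\hat{Q}=Q^{*}\Delta\hat{Q}$; (c) $\mathcal{R}(\hat{Q}^{\dagger}\hat{Q})=\mathcal{R}(\hat{Q}^{*})=\mathcal{R}(G^{*})$, via $G^{*}=\hat{Q}^{*}Q$ and $\hat{Q}^{*}=\hat{Q}^{*}QQ^{*}$. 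Two orthogonal projectors coincide exactly when their ranges agree, so $Q^{*}\Delta Q=\hat{Q}^{\dagger}\hat{Q}$ is equivalent, by (b)--(c), to $\mathcal{R}(G)=\mathcal{R}(G^{*})$, i.e.\ to $G$ being EP. Chaining these reductions proves the ``only if'' direction; reversing them (from (i)--(iii): first use (ii) for the block-diagonal form, then (i) to conclude $G$ is EP) proves the ``if'' direction.

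The step I expect to be the main obstacle is this last one. The block $Q$ is only an $r\times r$ contraction, in general neither invertible nor a partial isometry, so none of the range identities above can be obtained by ``cancelling $Q$''; everything must be funnelled through the single relation $\hat{Q}=QQ^{*}\hat{Q}$ and its consequence $\Delta QQ^{*}=\Delta$, which is precisely what makes $Q^{*}\Delta Q$ behave like a projector with the correct range.
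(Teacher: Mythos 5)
This theorem is not proved in the paper at all: it is imported verbatim from \cite[Theorem 2.10]{new}, and the paper's only contribution here is the observation (via Lemma \ref{DD}) that condition (iii) is redundant. Your proposal therefore cannot be compared to an in-paper argument, but it does supply a correct, self-contained proof, and I could not find a gap in it. The block formula $A^{c,\dagger}=U\left(\begin{smallmatrix} Q^{*}\hat{Q} & 0\\ P^{*}\hat{Q} & 0\end{smallmatrix}\right)U^{*}$ follows from (\ref{md}), (\ref{cm}) and the collapse $\Sigma(QQ^{*}+PP^{*})\Sigma^{-1}=I_{r}$ exactly as you say, and it is consistent with the expression for $(A^{c,\dagger})^{\dagger}$ that the paper quotes from \cite{am16} in the proof of Corollary \ref{WQRT}. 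The extraction of (ii) is sound: $\mathcal{R}(N^{*})\subseteq\mathbb{C}^{r}\times\{0\}$ forces $P^{*}\hat{Q}=0$, which is equivalent to $\Delta P=0$ since $\Delta$ is the orthogonal projector onto $\mathcal{R}(\hat{Q})$; and your reduction of (i) to the EP-ness of $G=Q^{*}\hat{Q}$ is carried correctly through the identity $\hat{Q}=QQ^{*}\hat{Q}$ (valid once $P^{*}\hat{Q}=0$), which makes $Q^{*}\Delta Q$ a Hermitian idempotent with range $\mathcal{R}(G)$ while $\hat{Q}^{\dagger}\hat{Q}$ is the orthogonal projector onto $\mathcal{R}(\hat{Q}^{*})=\mathcal{R}(G^{*})$; equality of two orthogonal projectors is equality of ranges, so (i) is exactly $\mathcal{R}(G)=\mathcal{R}(G^{*})$. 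A pleasant byproduct of your route is that condition (iii) never has to be assumed — it drops out of (ii) just as the paper's Lemma \ref{DD} asserts — so your argument simultaneously proves the cited theorem and the paper's sharpened version of it.
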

We can improve the previous Theorem \ref{ADC}.
In the above theorem the authors show that $ A^{c,\dagger } $ is an EP matrix  if and only if three conditions hold. But Lemma \ref{DD} shows that $(ii)$ implies $(iii)$. Therefore the condition $(iii)$ in \cite[Theorem 2.10]{new} is redundant.

\begin{corollary}\label{WQRT}
Let $ A \in M_{n}(\mathbb{C})$. Then $ A^{c,\dagger } $ is an EP matrix  if and only if \\
$ A^{\dagger ,d}(A^{c,\dagger })^{\dagger}=(A^{c,\dagger })^{\dagger}A^{d ,\dagger}. $
\end{corollary}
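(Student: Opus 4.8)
The plan is to carry out everything in the coordinates of the decomposition~(\ref{h}) and then reduce the asserted identity to the characterization of EP-ness of $A^{c,\dagger}$ in Theorem~\ref{ADC} (as sharpened by Lemma~\ref{DD}); the case $\mathrm{rank}(A)=0$ being trivial, I assume $A$ is written as in~(\ref{h}). First I would obtain block forms for the four matrices occurring in the statement. Inserting (\ref{md}) and (\ref{cm}) into $A^{c,\dagger}=A^{\dagger}A_cA^{\dagger}$ and using $QQ^{*}+PP^{*}=I_r$ from~(\ref{KL}) to collapse the $\Sigma\Sigma^{-1}$ cancellations, one gets
\[
A^{c,\dagger}=U\begin{pmatrix}Q^{*}\hat{Q}&0\\ P^{*}\hat{Q}&0\end{pmatrix}U^{*}.
\]
The key structural remark is that $W:=\begin{pmatrix}Q^{*}\\ P^{*}\end{pmatrix}$ satisfies $W^{*}W=QQ^{*}+PP^{*}=I_r$, i.e.\ $W$ is an isometry; hence $A^{c,\dagger}$ is, up to conjugation by $U$, the $n\times r$ matrix $W\hat{Q}$ padded with zero columns, and $(W\hat{Q})^{\dagger}=\hat{Q}^{\dagger}W^{*}$. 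This yields
\[
(A^{c,\dagger})^{\dagger}=U\begin{pmatrix}\hat{Q}^{\dagger}Q&\hat{Q}^{\dagger}P\\ 0&0\end{pmatrix}U^{*},
\]
while $A^{d,\dagger}$ and $A^{\dagger,d}$ are already available from~(\ref{mdhkp}).

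Next I would multiply out the two sides of the claimed equality. Writing $\Delta=\hat{Q}(\hat{Q})^{\dagger}$ as in Lemma~\ref{DD} and using $\hat{Q}=Q(\Sigma Q)^{d}$ (so that $\hat{Q}^{\dagger}Q(\Sigma Q)^{d}=\hat{Q}^{\dagger}\hat{Q}$), a short block computation should give
\[
A^{\dagger,d}(A^{c,\dagger})^{\dagger}=U\begin{pmatrix}Q^{*}\Delta Q&Q^{*}\Delta P\\ P^{*}\Delta Q&P^{*}\Delta P\end{pmatrix}U^{*},\qquad
(A^{c,\dagger})^{\dagger}A^{d,\dagger}=U\begin{pmatrix}\hat{Q}^{\dagger}\hat{Q}&0\\ 0&0\end{pmatrix}U^{*}.
\]
Consequently $A^{\dagger,d}(A^{c,\dagger})^{\dagger}=(A^{c,\dagger})^{\dagger}A^{d,\dagger}$ holds if and only if the four block equations $Q^{*}\Delta Q=\hat{Q}^{\dagger}\hat{Q}$, $\;Q^{*}\Delta P=0$, $\;P^{*}\Delta Q=0$, $\;P^{*}\Delta P=0$ hold simultaneously.

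It then remains to match these with Theorem~\ref{ADC}. Its conditions (i) and (iii) are exactly $Q^{*}\Delta Q=(\hat{Q})^{\dagger}\hat{Q}$ and $P^{*}\Delta Q=0$. For the other two: if $\Delta P=0$ (condition (ii)) then trivially $Q^{*}\Delta P=0$ and $P^{*}\Delta P=0$; conversely, $Q^{*}\Delta P=0$ together with $P^{*}\Delta P=0$ says precisely $W\Delta P=0$, and injectivity of the isometry $W$ forces $\Delta P=0$. Hence the four block equations are equivalent to conditions (i), (ii), (iii) of Theorem~\ref{ADC}, which by that theorem is equivalent to $A^{c,\dagger}$ being an EP matrix (consistently with Lemma~\ref{DD}, in which (iii) is seen to follow from (ii)). This establishes Corollary~\ref{WQRT}.

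The step I expect to be the main obstacle is getting the two block identities into the stated shape: the collapse of $A^{c,\dagger}$ (and hence of $(A^{c,\dagger})^{\dagger}$) relies on the cancellations coming from $QQ^{*}+PP^{*}=I_r$, and the formula $(W\hat{Q})^{\dagger}=\hat{Q}^{\dagger}W^{*}$ needs a brief verification of the four Penrose equations (using $W^{*}W=I_r$ and the fact that $\hat{Q}\hat{Q}^{\dagger}$ and $\hat{Q}^{\dagger}\hat{Q}$ are Hermitian). Once those are in place, the products and the final matching with Theorem~\ref{ADC} are routine.
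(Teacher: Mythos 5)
Your proposal is correct and follows essentially the same route as the paper: express $A^{c,\dagger}$, $(A^{c,\dagger})^{\dagger}$, $A^{\dagger,d}$ and $A^{d,\dagger}$ in the Hartwig--Spindelb\"ock blocks, multiply out, and match the resulting block equations with the conditions of Theorem~\ref{ADC} via $QQ^{*}+PP^{*}=I_r$. The only cosmetic difference is that you re-derive the block form of $(A^{c,\dagger})^{\dagger}$ through the isometry identity $(W\hat{Q})^{\dagger}=\hat{Q}^{\dagger}W^{*}$, where the paper cites the corresponding computation from the literature, and you keep all four block equations explicitly where the paper suppresses the one made redundant by Hermiticity.
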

\begin{proof}
Suppose that $ A \in M_{n}(\mathbb{C})$ be as in (\ref{h}). By using (\ref{mdhkp}) 
and \cite[ the proof of Theorem 2.6 (1)]{am16},
we get 
\begin{align*}
A^{\dagger ,d}(A^{c,\dagger })^{\dagger}&=U \left( \begin{array}{cc}
Q^{*}\hat{Q}(\hat{Q})^{\dagger}Q &Q^{*}\hat{Q}(\hat{Q})^{\dagger} P\\ 
P^{*}\hat{Q}(\hat{Q})^{\dagger}Q & P^{*}\hat{Q}(\hat{Q})^{\dagger}P
\end{array} \right) U^{*}, 
\end{align*}
which is hermitian, and 
\begin{align*}
(A^{c,\dagger })^{\dagger}A^{d ,\dagger}&=U \left( \begin{array}{cc}
(\hat{Q})^{\dagger}\hat{Q} &0\\ 
0 & 0
\end{array} \right) U^{*}. 
\end{align*}
Then $ A^{\dagger ,d}(A^{c,\dagger })^{\dagger}=(A^{c,\dagger })^{\dagger}A^{d, \dagger}$ if and only if the   folowing conditions hold:
\begin{align}
Q^{*}\hat{Q}(\hat{Q})^{\dagger}Q&=(\hat{Q})^{\dagger}\hat{Q},\label{162}
\\Q^{*}\hat{Q}(\hat{Q})^{\dagger} P&=0,\label{163}
\\P^{*}\hat{Q}(\hat{Q})^{\dagger}P&=0.\label{165} 
\end{align}
Thus, by Theorem \ref{ADC}, we know that $ A^{c,\dagger } $ is EP matrix if and only if 
\begin{align}
Q^{*}\hat{Q}(\hat{Q})^{\dagger} Q &=(\hat{Q})^{\dagger}\hat{Q},\label{166}
\\\hat{Q}(\hat{Q})^{\dagger}P&=0.\label{167}
\end{align}
The equations in (\ref{162}) and (\ref{166}) are the same. By pre-multiplying (\ref{163}) by $Q $ and (\ref{165}) by $P $
and utilizing (\ref{KL}), we arrive at
$ \hat{Q}(\hat{Q})^{\dagger}P = 0, $ which is (\ref{167}).
\end{proof}
We obtain properties by using  the MPDMP matrix associated with $A$.
\begin{theorem}\label{ER}
Let $ A \in M_{n}(\mathbb{C}) $ be written as in (\ref{h}). Then
\begin{equation}\label{ehsan80}
(A^{\dagger ,d ,\dagger})^{\dagger}= 
U \left( \begin{array}{cc}
(\tilde{\Sigma})^{\dagger}Q &(\tilde{\Sigma})^{\dagger}P\\ 
0 & 0
\end{array} \right) U^{*}.
\end{equation}
\end{theorem}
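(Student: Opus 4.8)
The plan is to compute $(A^{\dagger,d,\dagger})^{\dagger}$ directly from the block form~\eqref{WMM2}, which states that $A^{\dagger ,d,\dagger}=U\left(\begin{smallmatrix}Q^{*}\tilde{\Sigma} & 0\\ P^{*}\tilde{\Sigma} & 0\end{smallmatrix}\right)U^{*}$ with $\tilde{\Sigma}=\hat{Q}((\Sigma Q)^{d})^{2}$. Since $U$ is unitary, the Moore--Penrose inverse commutes with the unitary similarity, so it suffices to find the Moore--Penrose inverse of the single block matrix $M=\left(\begin{smallmatrix}Q^{*}\tilde{\Sigma} & 0\\ P^{*}\tilde{\Sigma} & 0\end{smallmatrix}\right)$. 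Writing $M=\left(\begin{smallmatrix}Q^{*}\\ P^{*}\end{smallmatrix}\right)\tilde{\Sigma}\left(\begin{smallmatrix}I_{r} & 0\end{smallmatrix}\right)$, I would note that $\left(\begin{smallmatrix}Q^{*}\\ P^{*}\end{smallmatrix}\right)$ has orthonormal columns because $QQ^{*}+PP^{*}=I_{r}$ by~\eqref{KL}, i.e. $\left(\begin{smallmatrix}Q & P\end{smallmatrix}\right)\left(\begin{smallmatrix}Q^{*}\\ P^{*}\end{smallmatrix}\right)=I_{r}$; likewise $\left(\begin{smallmatrix}I_{r} & 0\end{smallmatrix}\right)$ has orthonormal rows.

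The key step is the observation that for a product $E\,\tilde{\Sigma}\,F$ where $E$ has orthonormal columns ($E^{*}E=I$) and $F$ has orthonormal rows ($FF^{*}=I$), one has $(E\tilde{\Sigma}F)^{\dagger}=F^{*}\tilde{\Sigma}^{\dagger}E^{*}$. This is a standard fact (it is the reverse-order law, valid here because $E$ is a partial isometry with $\mathcal R(E)\supseteq\mathcal R(\tilde\Sigma)$ and $F^*$ is a partial isometry with $\mathcal R(F^*)\supseteq\mathcal R(\tilde\Sigma^*)$); alternatively one simply verifies the four Penrose equations~\eqref{MMoo} for the candidate $F^{*}\tilde{\Sigma}^{\dagger}E^{*}$ using $E^{*}E=I$ and $FF^{*}=I$. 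Applying it with $E=\left(\begin{smallmatrix}Q^{*}\\ P^{*}\end{smallmatrix}\right)$, $F=\left(\begin{smallmatrix}I_{r} & 0\end{smallmatrix}\right)$ gives
\begin{equation*}
M^{\dagger}=\begin{pmatrix}I_{r}\\ 0\end{pmatrix}(\tilde{\Sigma})^{\dagger}\begin{pmatrix}Q & P\end{pmatrix}=\begin{pmatrix}(\tilde{\Sigma})^{\dagger}Q & (\tilde{\Sigma})^{\dagger}P\\ 0 & 0\end{pmatrix},
\end{equation*}
and then $(A^{\dagger,d,\dagger})^{\dagger}=U M^{\dagger} U^{*}$ is exactly~\eqref{ehsan80}.

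I expect the main obstacle to be justifying the product rule for the Moore--Penrose inverse cleanly: one must make sure the hypotheses $E^{*}E=I$ and $FF^{*}=I$ are genuinely what~\eqref{KL} supplies and that no range condition on $\tilde{\Sigma}$ is needed beyond them. The safest route in the write-up is to avoid invoking a named reverse-order law and instead directly check the four equations of~\eqref{MMoo} for the explicit candidate $\left(\begin{smallmatrix}(\tilde{\Sigma})^{\dagger}Q & (\tilde{\Sigma})^{\dagger}P\\ 0 & 0\end{smallmatrix}\right)$ against $M$, using $QQ^{*}+PP^{*}=I_{r}$ and the Penrose equations for $\tilde\Sigma^\dagger$; this is a short computation and sidesteps any subtlety about when the reverse-order law holds. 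If one instead wants to mirror the style of the surrounding proofs, one can also derive~\eqref{WMM2} afresh and then factor, but the factorization argument above is the most economical.
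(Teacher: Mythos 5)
Your proposal is correct. The paper's own proof takes exactly the route you describe as the ``safest'': it writes down the candidate $X=U\left(\begin{smallmatrix}(\tilde{\Sigma})^{\dagger}Q & (\tilde{\Sigma})^{\dagger}P\\ 0&0\end{smallmatrix}\right)U^{*}$ and verifies the four Penrose equations \eqref{MMoo} blockwise against \eqref{WMM2}, using \eqref{KL} in precisely the places you indicate (the products collapse via $\tilde{\Sigma}\tilde{\Sigma}^{\dagger}(QQ^{*}+PP^{*})\tilde{\Sigma}=\tilde{\Sigma}\tilde{\Sigma}^{\dagger}\tilde{\Sigma}=\tilde{\Sigma}$, and the Hermiticity of the last two products comes from that of $\tilde{\Sigma}\tilde{\Sigma}^{\dagger}$ and $\tilde{\Sigma}^{\dagger}\tilde{\Sigma}$); it then appends an explicit uniqueness chain, which is just the standard argument that the Moore--Penrose inverse is unique. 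Your primary argument --- factoring $M=E\tilde{\Sigma}F$ with $E=\left(\begin{smallmatrix}Q^{*}\\ P^{*}\end{smallmatrix}\right)$, $F=\left(\begin{smallmatrix}I_{r}&0\end{smallmatrix}\right)$, noting $E^{*}E=QQ^{*}+PP^{*}=I_{r}$ and $FF^{*}=I_{r}$, and applying $(E\tilde{\Sigma}F)^{\dagger}=F^{*}\tilde{\Sigma}^{\dagger}E^{*}$ --- is a genuinely cleaner packaging of the same computation: that identity holds for an arbitrary middle factor once the outer factors have orthonormal columns and orthonormal rows respectively (it is an immediate check of \eqref{MMoo}, with no further range hypotheses), and it makes structurally visible why \eqref{KL} is what drives the block formula; it also delivers uniqueness for free. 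One small caveat: your parenthetical range condition $\mathcal{R}(E)\supseteq\mathcal{R}(\tilde{\Sigma})$ does not type-check ($E$ is $n\times r$ while $\tilde{\Sigma}$ is $r\times r$) and is not needed --- $E^{*}E=I$ and $FF^{*}=I$ alone suffice, as your own fallback verification shows --- so I would drop that remark in the write-up.
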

\begin{proof} 
Assume that $A $ is represented  as 
in (\ref{h}) and 
\begin{equation*} 
X=U \left( \begin{array}{cc}
(\tilde{\Sigma})^{\dagger}Q &(\tilde{\Sigma})^{\dagger}P\\ 
0 & 0
\end{array} \right) U^{*}.
\end{equation*}
By using (\ref{KL}) and \eqref{WMM2}, we have that
\begin{align*} 
A^{\dagger ,d,\dagger}XA^{\dagger ,d,\dagger}
&=U \left( \begin{array}{cc}
Q^{*}\tilde{\Sigma} &0\\ 
P^{*}\tilde{\Sigma} & 0
\end{array} \right) U^{*}=A^{\dagger ,d,\dagger}, \\ 
XA^{\dagger ,d,\dagger}X
&=U \left( \begin{array}{cc}
(\tilde{\Sigma})^{\dagger}Q &(\tilde{\Sigma})^{\dagger}P\\ 
0 & 0
\end{array} \right) U^{*}=X, \\
(A^{\dagger ,d,\dagger}X)^{*}
&=U\left( \begin{array}{cc}
Q^{*}\tilde{\Sigma}(\tilde{\Sigma})^{\dagger}Q & Q^{*}\tilde{\Sigma}(\tilde{\Sigma})^{\dagger}P \\ 
P^{*}\tilde{\Sigma}(\tilde{\Sigma})^{\dagger}Q & P^{*}\tilde{\Sigma}(\tilde{\Sigma})^{\dagger}P
\end{array} \right)U^{*} =A^{\dagger ,d,\dagger}X, \\ 
(XA^{\dagger ,d,\dagger})^{*}
&=U \left(\begin{array}{cc}
(\tilde{\Sigma})^{\dagger}(\tilde{\Sigma})^{*} & 0 \\ 
0 & 0
\end{array} \right) U^{*}
=XA^{\dagger ,d,\dagger}.
\end{align*}
The matrix $X$ satisfies four equations \eqref{MMoo}. Suppose both $X_1$ and $X_2$ also satisfy four equations each. In order to establish uniqueness, we proceed as follows 
\begin{align*} 
X_1&=X_1(AX_1)^{*}=X_1X_1^{*}A^{*}=X_1X_1^{*}(AX_1A)^{*}=
X_1X_1^{*}A^{*}Z^{*}A^{*}
=X_1(AX_1)^{*}(AX_2)^{*}
\\&=X_1AX_2
=X_1AX_2AX_2=(X_1A)^{*}(X_2A)^{*}X_2
=A^{*}X_1^{*}A^{*}X_2^{*}X_2=(X_2A)^{*}X_2=X_2.
\end{align*}
\end{proof}
We obtain three equivalent conditions for $A^{\dagger ,d,\dagger}, A^{C,\tddd}$ and $  A^{c,\dagger}$ to be an EP matrix.
\begin{theorem}\label{ehsank1}
Assume that $A $ is represented  as 
in (\ref{h}). Then $ A^{\dagger ,d,\dagger } $ is an EP matrix if and only if 
\begin{enumerate}[label=(\roman*)]
\item
$Q^{*}\hat{\Delta} Q=(\tilde{\Sigma})^{\dagger}\tilde{\Sigma}$,
\item
$\hat{\Delta} P=0$,
\end{enumerate}
where $ \hat{\Delta} =\tilde{\Sigma}( \tilde{\Sigma})^{\dagger}$.
\end{theorem}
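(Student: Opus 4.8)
The plan is to use the standard criterion that a square matrix $M$ is EP if and only if $MM^{\dagger}=M^{\dagger}M$ (equivalently $\mathcal R(M)=\mathcal R(M^{*})$), and to apply it to $M=A^{\dagger,d,\dagger}$ using the two block forms already available: $A^{\dagger,d,\dagger}$ is given by \eqref{WMM2} and its Moore--Penrose inverse $(A^{\dagger,d,\dagger})^{\dagger}$ is given by \eqref{ehsan80} in Theorem \ref{ER}. So the computation reduces to multiplying those two blocked matrices in both orders and comparing.

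First I would compute the two products. Multiplying $A^{\dagger,d,\dagger}$ by $(A^{\dagger,d,\dagger})^{\dagger}$ yields the $2\times2$ block matrix whose blocks are $Q^{*}\tilde{\Sigma}(\tilde{\Sigma})^{\dagger}Q$, $Q^{*}\tilde{\Sigma}(\tilde{\Sigma})^{\dagger}P$, $P^{*}\tilde{\Sigma}(\tilde{\Sigma})^{\dagger}Q$, $P^{*}\tilde{\Sigma}(\tilde{\Sigma})^{\dagger}P$, i.e. the matrix with blocks $Q^{*}\hat{\Delta}Q$, $Q^{*}\hat{\Delta}P$, $P^{*}\hat{\Delta}Q$, $P^{*}\hat{\Delta}P$, since $\hat{\Delta}=\tilde{\Sigma}(\tilde{\Sigma})^{\dagger}$. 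Multiplying in the other order, the only possibly nonzero block is the $(1,1)$ entry $(\tilde{\Sigma})^{\dagger}QQ^{*}\tilde{\Sigma}+(\tilde{\Sigma})^{\dagger}PP^{*}\tilde{\Sigma}=(\tilde{\Sigma})^{\dagger}(QQ^{*}+PP^{*})\tilde{\Sigma}=(\tilde{\Sigma})^{\dagger}\tilde{\Sigma}$, using \eqref{KL}. Hence $A^{\dagger,d,\dagger}$ is EP if and only if the four equations $Q^{*}\hat{\Delta}Q=(\tilde{\Sigma})^{\dagger}\tilde{\Sigma}$, $Q^{*}\hat{\Delta}P=0$, $P^{*}\hat{\Delta}Q=0$, $P^{*}\hat{\Delta}P=0$ all hold.

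Next I would reduce these four equations to the two stated conditions, exactly in the spirit of Lemma \ref{DD}. The matrix $\hat{\Delta}=\tilde{\Sigma}(\tilde{\Sigma})^{\dagger}$ is the orthogonal projector onto $\mathcal R(\tilde{\Sigma})$, hence Hermitian and idempotent. If (i) and (ii) hold, then $\hat{\Delta}P=0$ gives $P^{*}\hat{\Delta}=(\hat{\Delta}P)^{*}=0$, so the blocks $P^{*}\hat{\Delta}Q$ and $P^{*}\hat{\Delta}P$ vanish, and $Q^{*}\hat{\Delta}P=Q^{*}\hat{\Delta}^{2}P=Q^{*}\hat{\Delta}(\hat{\Delta}P)=0$; together with (i) this recovers all four equations. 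Conversely, if the four equations hold, then $\hat{\Delta}P=(QQ^{*}+PP^{*})\hat{\Delta}P=Q(Q^{*}\hat{\Delta}P)+P(P^{*}\hat{\Delta}P)=0$ by \eqref{KL}, which is (ii), while (i) is already one of the four. This establishes the equivalence.

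The calculation itself is routine block-matrix bookkeeping; the only substantive points are recognizing that $\hat{\Delta}$ is an orthogonal projector (so that the two equations in the $P^{*}\hat{\Delta}\,\cdot\,$ row and the $Q^{*}\hat{\Delta}P$ block become redundant once (ii) is assumed) and using $QQ^{*}+PP^{*}=I_{r}$ both to simplify $(A^{\dagger,d,\dagger})^{\dagger}A^{\dagger,d,\dagger}$ and to recover $\hat{\Delta}P=0$ from the vanishing of the second block column of $A^{\dagger,d,\dagger}(A^{\dagger,d,\dagger})^{\dagger}$. I do not expect any genuine obstacle beyond this.
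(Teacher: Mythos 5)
Your proposal is correct and follows essentially the same route as the paper: both compute $A^{\dagger,d,\dagger}(A^{\dagger,d,\dagger})^{\dagger}$ and $(A^{\dagger,d,\dagger})^{\dagger}A^{\dagger,d,\dagger}$ from \eqref{WMM2} and \eqref{ehsan80}, reduce the resulting four block equations to (i)--(ii) via $QQ^{*}+PP^{*}=I_{r}$, and exploit that $\hat{\Delta}$ is an orthogonal projector. If anything, your write-up is slightly more complete, since you verify both directions of the reduction explicitly, whereas the paper only spells out one.
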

\begin{proof}
By (\ref{KL}), \eqref{WMM2} and \eqref{ehsan80}, we have that 
\begin{align*}
A^{\dagger ,d,\dagger}(A^{\dagger ,d,\dagger })^{\dagger}&=U \left( \begin{array}{cc}
Q^{*}\tilde{\Sigma}(\tilde{\Sigma})^{\dagger}Q &Q^{*}\tilde{\Sigma}(\tilde{\Sigma})^{\dagger}P\\ 
P^{*}\tilde{\Sigma}(\tilde{\Sigma})^{\dagger}Q & P^{*}\tilde{\Sigma}(\tilde{\Sigma})^{\dagger}P
\end{array} \right) U^{*}, \\
(A^{\dagger ,d,\dagger})^{\dagger}A^{\dagger ,d,\dagger}&=U \left( \begin{array}{cc}
( \tilde{\Sigma})^{\dagger}\tilde{\Sigma} &0\\ 
0 & 0
\end{array} \right) U^{*}. 
\end{align*}
Then  $ A^{\dagger ,d,\dagger}(A^{\dagger ,d,\dagger })^{\dagger}=(A^{\dagger ,d,\dagger})^{\dagger}A^{\dagger ,d,\dagger} $  if and only if  the below conditions hold:
\begin{align}
Q^{*}\tilde{\Sigma}( \tilde{\Sigma})^{\dagger}Q&=( \tilde{\Sigma})^{\dagger}\tilde{\Sigma},\label{192t}
\\Q^{*}\tilde{\Sigma}(\tilde{\Sigma})^{\dagger}P&=0,\label{193t}
\\ P^{*}\tilde{\Sigma}( \tilde{\Sigma})^{\dagger}Q &= 0,\label{194t} 
\\ P^{*}\tilde{\Sigma}( \tilde{\Sigma})^{\dagger}P&=0.\label{195t} 
\end{align}
Observe that the equation (\ref{192t}) is equivalent to Theorem \ref{ADC}$(i)$.
 Using (\ref{KL}), by  left-multiplying the equations (\ref{193t}) and (\ref{195t}) by $Q $ and $ P$,
respectively, we obtain \\
$\tilde{\Sigma}(\tilde{\Sigma})^{\dagger}P=0$, equivalent to  Theorem \ref{ADC}$(ii)$.
\end{proof}
\begin{corollary}\label{XSDF11}
Let $A \in M_{n}(\mathbb{C})$ be written as in (\ref{h}). If 
$  A^{\dagger ,d,\dagger }$ is an EP matrix, then
\begin{enumerate}
\item
$ [PP^{*}, \hat{\Delta}]=0, $
\item
$ [QQ^{*}, \hat{\Delta}]=0, $
\item
$\hat{\Delta} =Q( \tilde{\Sigma})^{\dagger}\tilde{\Sigma}Q^{*}$,
\end{enumerate}
where $ [A, B]=AB-BA.$
\end{corollary}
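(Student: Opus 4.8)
The plan is to derive all three conclusions from the two conditions supplied by Theorem~\ref{ehsank1}, namely (i) $Q^{*}\hat{\Delta}Q=(\tilde{\Sigma})^{\dagger}\tilde{\Sigma}$ and (ii) $\hat{\Delta}P=0$, together with the normalization $QQ^{*}+PP^{*}=I_{r}$ from~\eqref{KL}. First I would note that $\hat{\Delta}=\tilde{\Sigma}(\tilde{\Sigma})^{\dagger}$ is an orthogonal projector (onto $\mathcal{R}(\tilde{\Sigma})$), hence Hermitian and idempotent; in particular, taking conjugate transposes in (ii) gives $P^{*}\hat{\Delta}=0$ as well. This is the same observation used for $\Delta=\hat{Q}(\hat{Q})^{\dagger}$ in Lemma~\ref{DD}.

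For item (1), from $\hat{\Delta}P=0$ I would right-multiply by $P^{*}$ to obtain $\hat{\Delta}PP^{*}=0$, and from $P^{*}\hat{\Delta}=0$ I would left-multiply by $P$ to obtain $PP^{*}\hat{\Delta}=0$; subtracting yields $[PP^{*},\hat{\Delta}]=PP^{*}\hat{\Delta}-\hat{\Delta}PP^{*}=0$. For item (2), I would substitute $QQ^{*}=I_{r}-PP^{*}$ from~\eqref{KL}, so that $[QQ^{*},\hat{\Delta}]=[I_{r},\hat{\Delta}]-[PP^{*},\hat{\Delta}]=0$ by item (1).

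For item (3), I would use the same two identities $\hat{\Delta}PP^{*}=0$ and $PP^{*}\hat{\Delta}=0$ in a different way: the first gives $\hat{\Delta}(I_{r}-QQ^{*})=0$, i.e. $\hat{\Delta}=\hat{\Delta}QQ^{*}$, and the second gives $\hat{\Delta}=QQ^{*}\hat{\Delta}$; combining the two yields the ``sandwiched'' form $\hat{\Delta}=QQ^{*}\hat{\Delta}QQ^{*}$. Finally, inserting condition (i) in the middle gives $\hat{\Delta}=Q\left(Q^{*}\hat{\Delta}Q\right)Q^{*}=Q(\tilde{\Sigma})^{\dagger}\tilde{\Sigma}Q^{*}$, which is the desired formula.

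I do not expect any genuine obstacle here; the argument is purely algebraic bookkeeping once Theorem~\ref{ehsank1} is in hand. The only steps that require a moment's care are recognizing that $\hat{\Delta}$ is Hermitian (so that (ii) also yields $P^{*}\hat{\Delta}=0$) and using~\eqref{KL} to trade $PP^{*}$ for $I_{r}-QQ^{*}$, which is precisely what makes both the commutator identity in (2) and the sandwich $\hat{\Delta}=QQ^{*}\hat{\Delta}QQ^{*}$ in (3) work. The overall structure parallels the reduction already carried out in Corollary~\ref{WQRT}.
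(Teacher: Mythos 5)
Your proof is correct, and it is a streamlined variant of the paper's argument rather than a verbatim reproduction of it. The paper proves items (1) and (2) starting only from the off-diagonal EP equations $Q^{*}\hat{\Delta}P=0$ and $P^{*}\hat{\Delta}Q=0$ (its (\ref{193t}) and (\ref{194t})): it sandwiches these between $Q,P^{*}$ and $P,Q^{*}$, invokes (\ref{KL}) to rewrite $QQ^{*}$ as $I_{r}-PP^{*}$, and after a chain of substitutions concludes only that $\hat{\Delta}PP^{*}=PP^{*}\hat{\Delta}$ and $\hat{\Delta}QQ^{*}=QQ^{*}\hat{\Delta}$. You instead start from the distilled condition $\hat{\Delta}P=0$ of Theorem \ref{ehsank1} (which is legitimately available, since that theorem shows EP-ness of $A^{\dagger,d,\dagger}$ is equivalent to its conditions (i) and (ii)), combine it with Hermitianness of the orthogonal projector $\hat{\Delta}$ to get $P^{*}\hat{\Delta}=0$, and observe that both commutator terms $\hat{\Delta}PP^{*}$ and $PP^{*}\hat{\Delta}$ vanish individually. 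This yields a strictly stronger intermediate conclusion with less computation, and makes item (2) a one-line consequence of (1) via (\ref{KL}). For item (3) the two arguments are essentially parallel: the paper multiplies $Q^{*}\hat{\Delta}Q=(\tilde{\Sigma})^{\dagger}\tilde{\Sigma}$ and $\hat{\Delta}P=0$ by $Q$, $P$, $Q^{*}$, $P^{*}$ and adds using (\ref{KL}), whereas your sandwich identity $\hat{\Delta}=QQ^{*}\hat{\Delta}QQ^{*}=Q(Q^{*}\hat{\Delta}Q)Q^{*}$ packages the same manipulation more transparently. What your route buys is clarity: it exposes that, under the EP hypothesis, the commutators in (1) and (2) vanish for the trivial reason that each product is zero, a fact the paper's longer derivation obscures.
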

\begin{proof}
Suppose that  $ A^{\dagger ,d,\dagger}(A^{\dagger ,d,\dagger })^{\dagger}=(A^{\dagger ,d,\dagger})^{\dagger}A^{\dagger ,d,\dagger} $. Then 
(\ref{193t}) and (\ref{194t}) hold.
Pre and post multiplying (\ref{193t}) by $ Q $ and $ P^{*}, $ respectively, and moreover, pre and post multiplying (\ref{194t}) by 
$ P $ and 
$ Q^{*} $, respectively, we have 
\begin{equation}\label{ase234}
\begin{split}
QQ^{*}\tilde{\Sigma}( \tilde{\Sigma})^{\dagger}PP^{*}&= 0\\ 
PP^{*}\tilde{\Sigma}(\tilde{\Sigma})^{\dagger}QQ^{*}&= 0.
\end{split}
\end{equation}
Using (\ref{KL}) and (\ref{ase234}), we get
\begin{equation}\label{ase236}
\begin{split}
(I_{r}-PP^{*})\tilde{\Sigma}( \tilde{\Sigma})^{\dagger}PP^{*}&= 0\\ 
(I_{r}-QQ^{*})\tilde{\Sigma}(\tilde{\Sigma})^{\dagger}QQ^{*}&= 0.
\end{split}
\end{equation}
Then (\ref{ase236}) can be written as 
\begin{equation}\label{ase238}
\begin{split}
\tilde{\Sigma}(\tilde{\Sigma})^{\dagger}PP^{*}&= PP^{*}\tilde{\Sigma}(\tilde{\Sigma})^{\dagger}PP^{*}, \\
\tilde{\Sigma}( \tilde{\Sigma})^{\dagger}QQ^{*}&=QQ^{*}\tilde{\Sigma}( \tilde{\Sigma})^{\dagger}QQ^{*}.
\end{split}
\end{equation}
Using (\ref{KL}), (\ref{ase234}) and (\ref{ase238}), we obtain
\begin{equation*}
\begin{split}
&\tilde{\Sigma}(\tilde{\Sigma})^{\dagger}PP^{*}= PP^{*}\tilde{\Sigma}(\tilde{\Sigma})^{\dagger}PP^{*}\\
&= PP^{*}\tilde{\Sigma}(\tilde{\Sigma})^{\dagger}PP^{*}
+PP^{*}\tilde{\Sigma}(\tilde{\Sigma})^{\dagger}QQ^{*}
\\&=PP^{*}\tilde{\Sigma}( \tilde{\Sigma})^{\dagger}(PP^{*}+QQ^{*})
=PP^{*}\tilde{\Sigma}(\tilde{\Sigma})^{\dagger}\\
&\tilde{\Sigma}( \tilde{\Sigma})^{\dagger}QQ^{*}
=QQ^{*}\tilde{\Sigma}( \tilde{\Sigma})^{\dagger}QQ^{*}
+QQ^{*}\tilde{\Sigma}(\tilde{\Sigma})^{\dagger}PP^{*}
\\&=QQ^{*}\tilde{\Sigma}(\tilde{\Sigma})^{\dagger}(QQ^{*}+PP^{*})
=QQ^{*}\tilde{\Sigma}(\tilde{\Sigma})^{\dagger}.
\end{split}
\end{equation*}
Therefore, $ [PP^{*} , \hat{\Delta}]=0 $ and 
$ [QQ^{*} ,\hat{\Delta}]=0$.

3. By Theorem \ref{ADC}$(i)$,
premultiplying
$Q^{*}\hat{\Delta} Q=(\tilde{\Sigma})^{\dagger}\tilde{\Sigma}$ by $Q$, Moreover, $ \hat{\Delta} = \tilde{\Sigma}( \tilde{\Sigma})^{\dagger}$ is an orthogonal projector. Thus, 
$ \hat{\Delta} $ is hermitian. By Theorem \ref{ADC}$(ii)$,
$ \hat{\Delta}P=0, $ then $ P^{*}\hat{\Delta} =0 $ and this implies 
$P^*\hat{\Delta} Q=0,$
by $P$ and adding them  and using (\ref{KL}), we get $\hat{\Delta} Q=Q( \tilde{\Sigma})^{\dagger}\tilde{\Sigma} $.
Now, post-multyplying $\hat{\Delta} Q=Q( \tilde{\Sigma})^{\dagger}\tilde{\Sigma} $ by $Q^*$ and $\hat{\Delta} P=0$ by $P^*$ and adding then, we get,
$\hat{\Delta}  =Q(\tilde{\Sigma})^{\dagger}\tilde{\Sigma} Q^*$.
\end{proof}
Now, we consider the CCE-inverse $A^{C,\tddd}=A^\dag A A^{\tddd} A A^\dag$ of $A \in M_{n}(\mathbb{C})$ defined in \cite{Zuo}.

Employing  a similar method as in the proof of Theorem \ref{ER},  the following hold.
\begin{corollary}\label{ERUPO}
Let $ A \in M_{n}(\mathbb{C}) $ be written as in (\ref{h}). Then
\begin{equation*}
(A^{C,\tddd})^{\dagger}= 
U \left( \begin{array}{cc}
(\tilde{Q})^{\dagger}Q &(\tilde{Q})^{\dagger}P\\ 
0 & 0
\end{array} \right) U^{*}.
\end{equation*}
where $ \tilde{Q}=Q(\Sigma Q)^{\tddd}.$
\end{corollary}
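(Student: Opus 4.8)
The plan is to imitate the proof of Theorem \ref{ER} almost verbatim, replacing the matrix $A^{\dagger ,d,\dagger}$ by $A^{C,\tddd}$ and the block $\tilde{\Sigma}$ by $\tilde{Q}=Q(\Sigma Q)^{\tddd}$. First I would record the Hartwig--Spindelb\"ock form \eqref{h} of $A$ together with \eqref{KL}, and obtain a closed-form block expression for $A^{C,\tddd}=A^\dagger A A^{\tddd} A A^\dagger$. This requires the block form of the core-EP inverse $A^{\tddd}$ in the coordinates of \eqref{h}; using $A^{\tddd}=A^d A^k (A^k)^{\dagger}$ (or the known expression $A^{\tddd}=U\begin{psmallmatrix}(\Sigma Q)^{\tddd} & 0\\ 0&0\end{psmallmatrix}U^*$-type block, with the $(\Sigma Q)^{\tddd}$ notation already in play via $\tilde{Q}$), a direct multiplication with $A^\dagger=U\begin{psmallmatrix}Q^*\Sigma^{-1}&0\\ P^*\Sigma^{-1}&0\end{psmallmatrix}U^*$ from \eqref{md} and $A$ from \eqref{h} should collapse to
\begin{equation*}
A^{C,\tddd}=U\left(\begin{array}{cc}Q^*\tilde{Q} & 0\\ P^*\tilde{Q} & 0\end{array}\right)U^*,
\end{equation*}
entirely parallel to \eqref{WMM2}. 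I expect this bookkeeping step — getting $A^{C,\tddd}$ into exactly the analogue of \eqref{WMM2} — to be the main obstacle, since it is where one must be careful about how $A^{\tddd}$ sits in the decomposition and about the interplay $\Sigma^{-1}(\Sigma Q)=Q$.

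Once $A^{C,\tddd}$ has the block form $U\begin{psmallmatrix}Q^*\tilde{Q}&0\\ P^*\tilde{Q}&0\end{psmallmatrix}U^*$, the rest is routine and mirrors Theorem \ref{ER}. I set
\begin{equation*}
X=U\left(\begin{array}{cc}(\tilde{Q})^{\dagger}Q & (\tilde{Q})^{\dagger}P\\ 0&0\end{array}\right)U^*
\end{equation*}
and verify the four Moore--Penrose equations \eqref{MMoo} for the pair $(A^{C,\tddd},X)$ by block multiplication, using $QQ^*+PP^*=I_r$ from \eqref{KL} and the defining identities of $(\tilde{Q})^{\dagger}$, namely $\tilde{Q}(\tilde{Q})^{\dagger}\tilde{Q}=\tilde{Q}$, $(\tilde{Q})^{\dagger}\tilde{Q}(\tilde{Q})^{\dagger}=(\tilde{Q})^{\dagger}$, and the hermitian symmetry of $\tilde{Q}(\tilde{Q})^{\dagger}$ and $(\tilde{Q})^{\dagger}\tilde{Q}$. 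The computation of $A^{C,\tddd}XA^{C,\tddd}$ uses $(\tilde{Q})^{\dagger}(QQ^*+PP^*)\tilde{Q}=(\tilde{Q})^{\dagger}\tilde{Q}$, that of $XA^{C,\tddd}X$ uses the same identity, $(A^{C,\tddd}X)^*=A^{C,\tddd}X$ uses hermiticity of $\tilde{Q}(\tilde{Q})^{\dagger}$, and $(XA^{C,\tddd})^*=XA^{C,\tddd}$ uses hermiticity of $(\tilde{Q})^{\dagger}\tilde{Q}$ together with $QQ^*+PP^*=I_r$.

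Finally, for uniqueness I would either invoke the uniqueness of the Moore--Penrose inverse directly — since $X$ satisfies \eqref{MMoo} with respect to $A^{C,\tddd}$, it must equal $(A^{C,\tddd})^{\dagger}$ — or reproduce verbatim the chain of equalities ending the proof of Theorem \ref{ER}. The upshot is the claimed formula
\begin{equation*}
(A^{C,\tddd})^{\dagger}=U\left(\begin{array}{cc}(\tilde{Q})^{\dagger}Q & (\tilde{Q})^{\dagger}P\\ 0&0\end{array}\right)U^*,\qquad \tilde{Q}=Q(\Sigma Q)^{\tddd}.
\end{equation*}
Since the excerpt explicitly says ``Employing a similar method as in the proof of Theorem \ref{ER}'', a short proof that points to that template, states the block form of $A^{C,\tddd}$, and checks that $X$ above satisfies the four equations should suffice; I would not grind through every block entry in print.
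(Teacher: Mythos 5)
Your proposal is correct and is exactly the route the paper intends: the paper gives no explicit proof of Corollary~\ref{ERUPO} beyond the remark that one should employ a similar method as in Theorem~\ref{ER}, and your plan — compute $A^{C,\tddd}=A^{\dagger}AA^{\tddd}AA^{\dagger}=U\left(\begin{smallmatrix}Q^{*}\tilde{Q} & 0\\ P^{*}\tilde{Q} & 0\end{smallmatrix}\right)U^{*}$ in the Hartwig--Spindelb\"ock coordinates and then verify the four Moore--Penrose equations for the candidate $X$ using \eqref{KL} and the defining identities of $(\tilde{Q})^{\dagger}$ — is precisely that method. The block form you anticipate for $A^{C,\tddd}$ is the one the paper itself uses later (via \cite[Theorem 3.2]{Zuo}), so the step you flag as the main obstacle does collapse as you expect.
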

Employing  a similar method as in the proofs of Theorem \ref{ehsank1}, Corollarys \ref{XSDF11}, \ref{ERUPO} and \cite[Theorem 3.2]{Zuo},  the following hold.
\begin{corollary}
Assume that $A $ is represented  as 
in (\ref{h}). Then $ A^{C,\tddd} $ is an EP matrix if and only if 
\begin{enumerate}[label=(\roman*)]
\item
$Q^{*}\tilde{\Delta} Q=(\tilde{Q})^{\dagger}\tilde{Q}$,
\item
$\tilde{\Delta} P=0$,
\end{enumerate}
 Moreover, If $  A^{C,\tddd}$ is an EP matrix, then
\begin{enumerate}
\item
$ [PP^{*}, \tilde{\Delta}]=0, $
\item
$ [QQ^{*}, \tilde{\Delta}]=0, $
\item
$\tilde{\Delta} =Q( \tilde{Q})^{\dagger}\tilde{Q}Q^{*}$,
\end{enumerate}
where $ \tilde{\Delta} =\tilde{Q}( \tilde{Q})^{\dagger}$.
\end{corollary}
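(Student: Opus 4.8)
The plan is to transcribe the arguments in the proofs of Theorem~\ref{ehsank1} and Corollary~\ref{XSDF11}, with $\tilde{\Sigma}$ replaced by $\tilde{Q}$ throughout, after first fixing the block form of $A^{C,\tddd}$ in the Hartwig--Spindelb\"ock coordinates of~(\ref{h}). From \cite[Theorem 3.2]{Zuo} (equivalently, by computing the Moore--Penrose inverse of the matrix given in Corollary~\ref{ERUPO} and using $(\tilde{Q})^{\dagger\dagger}=\tilde{Q}$) one has
\[
A^{C,\tddd}=U\left(\begin{array}{cc} Q^{*}\tilde{Q} & 0\\ P^{*}\tilde{Q} & 0\end{array}\right)U^{*}\quad \& \quad (A^{C,\tddd})^{\dagger}=U\left(\begin{array}{cc} (\tilde{Q})^{\dagger}Q & (\tilde{Q})^{\dagger}P\\ 0 & 0\end{array}\right)U^{*},
\]
where $\tilde{Q}=Q(\Sigma Q)^{\tddd}$. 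Multiplying these two blocks in both orders and using $QQ^{*}+PP^{*}=I_{r}$ from~(\ref{KL}), exactly as in the proof of Theorem~\ref{ehsank1}, one obtains
\[
A^{C,\tddd}(A^{C,\tddd})^{\dagger}=U\left(\begin{array}{cc} Q^{*}\tilde{\Delta}Q & Q^{*}\tilde{\Delta}P\\ P^{*}\tilde{\Delta}Q & P^{*}\tilde{\Delta}P\end{array}\right)U^{*}\quad \& \quad (A^{C,\tddd})^{\dagger}A^{C,\tddd}=U\left(\begin{array}{cc} (\tilde{Q})^{\dagger}\tilde{Q} & 0\\ 0 & 0\end{array}\right)U^{*},
\]
with $\tilde{\Delta}=\tilde{Q}(\tilde{Q})^{\dagger}$.

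Since $A^{C,\tddd}$ is EP if and only if these two products coincide, $A^{C,\tddd}$ is EP precisely when $Q^{*}\tilde{\Delta}Q=(\tilde{Q})^{\dagger}\tilde{Q}$, $Q^{*}\tilde{\Delta}P=0$, $P^{*}\tilde{\Delta}Q=0$ and $P^{*}\tilde{\Delta}P=0$ all hold. The first equality is condition~$(i)$. Left-multiplying $Q^{*}\tilde{\Delta}P=0$ by $Q$ and $P^{*}\tilde{\Delta}P=0$ by $P$ and adding, then using~(\ref{KL}), yields $\tilde{\Delta}P=0$, which is~$(ii)$. Conversely, $\tilde{\Delta}=\tilde{Q}(\tilde{Q})^{\dagger}$ is an orthogonal projector, hence Hermitian, so $\tilde{\Delta}P=0$ gives $P^{*}\tilde{\Delta}=0$; together with~$(i)$ this recovers all four equalities. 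This proves the stated equivalence.

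For the ``moreover'' part I would follow the proof of Corollary~\ref{XSDF11} verbatim. Assume $A^{C,\tddd}$ is EP. Pre- and post-multiplying $Q^{*}\tilde{\Delta}P=0$ by $Q$ and $P^{*}$, and $P^{*}\tilde{\Delta}Q=0$ by $P$ and $Q^{*}$, gives $QQ^{*}\tilde{\Delta}PP^{*}=0$ and $PP^{*}\tilde{\Delta}QQ^{*}=0$; by~(\ref{KL}) these are $(I_{r}-PP^{*})\tilde{\Delta}PP^{*}=0$ and $(I_{r}-QQ^{*})\tilde{\Delta}QQ^{*}=0$, so $\tilde{\Delta}PP^{*}=PP^{*}\tilde{\Delta}PP^{*}=PP^{*}\tilde{\Delta}(PP^{*}+QQ^{*})=PP^{*}\tilde{\Delta}$ and, symmetrically, $\tilde{\Delta}QQ^{*}=QQ^{*}\tilde{\Delta}$; these are $(1)$ and $(2)$. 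For $(3)$, pre-multiply~$(i)$ by $Q$ and add $P(P^{*}\tilde{\Delta}Q)=0$ to obtain $\tilde{\Delta}Q=Q(\tilde{Q})^{\dagger}\tilde{Q}$ via~(\ref{KL}); then post-multiply by $Q^{*}$ and add $\tilde{\Delta}PP^{*}=0$ (which follows from $\tilde{\Delta}P=0$) to conclude $\tilde{\Delta}=Q(\tilde{Q})^{\dagger}\tilde{Q}Q^{*}$.

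The only step that is not a mechanical rewriting of the earlier proofs is the first one, namely justifying the block form of $A^{C,\tddd}$. The key observation is that any $n\times n$ matrix of the form $M=U\left(\begin{array}{cc} Q^{*}S & 0\\ P^{*}S & 0\end{array}\right)U^{*}$ with $S\in M_{r}(\mathbb{C})$ can be written $M=UWSE\,U^{*}$, where $W=\left(\begin{array}{c} Q^{*}\\ P^{*}\end{array}\right)$ satisfies $W^{*}W=QQ^{*}+PP^{*}=I_{r}$ by~(\ref{KL}) and $E=\left(\begin{array}{cc} I_{r} & 0\end{array}\right)$ satisfies $EE^{*}=I_{r}$; a direct check of the four equations in~(\ref{MMoo}) then gives $M^{\dagger}=UE^{*}S^{\dagger}W^{*}U^{*}$. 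Applying this with $S=\tilde{Q}$, comparing $M^{\dagger}$ with the formula in Corollary~\ref{ERUPO}, and using $(\tilde{Q})^{\dagger\dagger}=\tilde{Q}$, identifies $A^{C,\tddd}$ with $M$; from there the computation is identical to those in Theorem~\ref{ehsank1} and Corollary~\ref{XSDF11}.
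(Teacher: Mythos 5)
Your proposal is correct and follows essentially the same route the paper intends: it invokes the block form of $A^{C,\tddd}$ from \cite[Theorem 3.2]{Zuo} together with Corollary \ref{ERUPO}, and then repeats the computations of Theorem \ref{ehsank1} and Corollary \ref{XSDF11} with $\tilde{\Sigma}$ replaced by $\tilde{Q}$, which is exactly what the paper means by ``employing a similar method.'' The extra justification you give for the block form of $A^{C,\tddd}$ and of its Moore--Penrose inverse (via the factorization $M=UW\tilde{Q}EU^{*}$ with $W^{*}W=I_{r}$ and $EE^{*}=I_{r}$) is sound and only makes the argument more self-contained.
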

\begin{theorem} 
Let $ A \in M_{n}(\mathbb{C}) $ be written as in (\ref{h}). If $ (\Sigma Q)^{\tddd}=(\Sigma Q)^{d} $ , then 
 $ A^{c,\dagger } $ is an EP matrix if and only if 
$ A^{C,\tddd}(A^{c,\dagger })^{\dagger}=(A^{c,\dagger })^{\dagger} A^{C,\tddd}$.
\end{theorem}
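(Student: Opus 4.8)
The plan is to observe that the hypothesis $(\Sigma Q)^{\tddd}=(\Sigma Q)^{d}$ makes the CCE-inverse and the CMP-inverse of $A$ coincide, i.e. $A^{C,\tddd}=A^{c,\dagger}$, and then to read off the asserted equivalence from the standard description of EP matrices applied to the single matrix $B:=A^{c,\dagger}$.

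First I would compare the Hartwig--Spindelb\"ock expressions for the two Moore--Penrose inverses involved. Corollary~\ref{ERUPO} gives $(A^{C,\tddd})^{\dagger}=U\left(\begin{smallmatrix}(\tilde Q)^{\dagger}Q & (\tilde Q)^{\dagger}P\\ 0 & 0\end{smallmatrix}\right)U^{*}$ with $\tilde Q=Q(\Sigma Q)^{\tddd}$, while the same block computation carried out in the proof of Corollary~\ref{WQRT} (through \cite[the proof of Theorem 2.6(1)]{am16}) shows that $(A^{c,\dagger})^{\dagger}=U\left(\begin{smallmatrix}(\hat Q)^{\dagger}Q & (\hat Q)^{\dagger}P\\ 0 & 0\end{smallmatrix}\right)U^{*}$ with $\hat Q=Q(\Sigma Q)^{d}$. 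Since the hypothesis is exactly $(\Sigma Q)^{\tddd}=(\Sigma Q)^{d}$, we get $\tilde Q=\hat Q$, hence $(A^{C,\tddd})^{\dagger}=(A^{c,\dagger})^{\dagger}$; applying the Moore--Penrose operation once more and using $(M^{\dagger})^{\dagger}=M$ yields $A^{C,\tddd}=A^{c,\dagger}$.

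It then remains to invoke that a square matrix $B$ is EP (range-Hermitian) if and only if $BB^{\dagger}=B^{\dagger}B$: both $BB^{\dagger}$ and $B^{\dagger}B$ are the orthogonal projectors onto $\mathcal R(B)$ and $\mathcal R(B^{*})$ respectively, so they agree exactly when $\mathcal R(B)=\mathcal R(B^{*})$. Taking $B=A^{c,\dagger}$ and replacing $A^{C,\tddd}$ by $A^{c,\dagger}$ in both factors, the proposed equation $A^{C,\tddd}(A^{c,\dagger})^{\dagger}=(A^{c,\dagger})^{\dagger}A^{C,\tddd}$ becomes literally $A^{c,\dagger}(A^{c,\dagger})^{\dagger}=(A^{c,\dagger})^{\dagger}A^{c,\dagger}$, which is equivalent to $A^{c,\dagger}$ being EP. This establishes both implications at once, since under the hypothesis the reduction is an equality, not a mere sufficient condition.

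The only step carrying any content is the identification $A^{C,\tddd}=A^{c,\dagger}$; once one notices that $\tilde Q$ and $\hat Q$ differ solely through $(\Sigma Q)^{\tddd}$ versus $(\Sigma Q)^{d}$, it is forced, so I expect no genuine obstacle. A route not passing through Corollary~\ref{ERUPO} is to write $A^{C,\tddd}=A^{\dagger}(AA^{\tddd}A)A^{\dagger}$ and $A^{c,\dagger}=A^{\dagger}A_{c}A^{\dagger}=A^{\dagger}(AA^{d}A)A^{\dagger}$ and to check from the Hartwig--Spindelb\"ock forms that $AA^{\tddd}A=AA^{d}A$ as soon as $(\Sigma Q)^{\tddd}=(\Sigma Q)^{d}$; the only inconvenience there is that the block form of $A^{\tddd}$ is not recorded in the excerpt, which is why I would prefer the argument via Corollary~\ref{ERUPO}.
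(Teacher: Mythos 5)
Your proof is correct, but it takes a genuinely different (and shorter) route than the paper. The paper never observes that the hypothesis collapses the two generalized inverses; instead it computes the block forms of $A^{C,\tddd}(A^{c,\dagger})^{\dagger}$ and $(A^{c,\dagger})^{\dagger}A^{C,\tddd}$ in the Hartwig--Spindelb\"ock decomposition, extracts the four resulting block equations, and matches them one by one against the three conditions of Theorem~\ref{ADC} (using $(\Sigma Q)^{\tddd}=(\Sigma Q)^{d}$ only to identify $Q^{*}\tilde{Q}(\hat{Q})^{\dagger}Q=(\hat{Q})^{\dagger}\tilde{Q}$ with condition $(i)$ there, and \eqref{KL} to merge the off-diagonal equations into $\hat{Q}(\hat{Q})^{\dagger}P=0$). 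You instead note that $\tilde{Q}=Q(\Sigma Q)^{\tddd}=Q(\Sigma Q)^{d}=\hat{Q}$ forces $A^{C,\tddd}=A^{c,\dagger}$ (either via Corollary~\ref{ERUPO} and the block form of $(A^{c,\dagger})^{\dagger}$, or directly from $A^{C,\tddd}=A^{\dagger}AA^{\tddd}AA^{\dagger}$ and $A^{c,\dagger}=A^{\dagger}AA^{d}AA^{\dagger}$), after which the asserted identity is literally $BB^{\dagger}=B^{\dagger}B$ for $B=A^{c,\dagger}$, i.e.\ the standard characterization of EP that the paper itself uses elsewhere (e.g.\ in Theorem~\ref{ehsank1}). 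Your argument is more conceptual and exposes that, under its hypothesis, the theorem is essentially the tautology ``$A^{c,\dagger}$ is EP iff it commutes with its own Moore--Penrose inverse''; the paper's computation buys an explicit verification against the conditions of \cite[Theorem 2.10]{new} and would still produce usable block conditions in situations where $(\Sigma Q)^{\tddd}\neq(\Sigma Q)^{d}$, where your reduction is unavailable. Both proofs are sound.
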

\begin{proof}
Assume that $A $ is represented  as 
in (\ref{h}). By using  the proof of  \cite[Theorem 2.6 (1)]{am16} and
\cite[Theorem 3.2]{Zuo}, we have 
\begin{align*}
A^{C, \tddd}(A^{c,\dagger })^{\dagger}&=U \left( \begin{array}{cc}
Q^{*}\tilde{Q}(\hat{Q})^{\dagger}Q &Q^{*}\tilde{Q}(\hat{Q})^{\dagger} P\\ 
P^{*}\tilde{Q}(\hat{Q})^{\dagger}Q & P^{*}\tilde{Q}(\hat{Q})^{\dagger}P
\end{array} \right) U^{*}, \\
(A^{c,\dagger })^{\dagger}A^{C ,\tddd}&=U \left( \begin{array}{cc}
(\hat{Q})^{\dagger}\tilde{Q} &0\\ 
0 & 0
\end{array} \right) U^{*}. 
\end{align*}
Then $ A^{C ,\tddd}(A^{c,\dagger })^{\dagger}=(A^{c,\dagger })^{\dagger}A^{C, \tddd} $  if and only if  the  following conditions hold:
\begin{align}
Q^{*}\tilde{Q}(\hat{Q})^{\dagger}Q&=(\hat{Q})^{\dagger}\tilde{Q},\label{192}
\\Q^{*}\tilde{Q}(\hat{Q})^{\dagger} P&=0,\label{193}
\\ P^{*}\tilde{Q}(\hat{Q})^{\dagger}Q &= 0,\nonumber 
\\ P^{*}\tilde{Q}(\hat{Q})^{\dagger}P&=0.\label{195} 
\end{align}
Thus, by Theorem \ref{ADC}, we know that $ A^{c,\dagger } $ is EP if and only if 
\begin{align}
Q^{*}\hat{Q}(\hat{Q})^{\dagger} Q &=(\hat{Q})^{\dagger}\hat{Q},\label{196}
\\\hat{Q}(\hat{Q})^{\dagger}P&=0.\label{197}
\end{align}
By using $ (\Sigma Q)^{\tddd}=(\Sigma Q)^{d},$ the equations in (\ref{192}) and (\ref{196}) are equivalent.  By pre-multiplying (\ref{193}) by $Q $ and (\ref{195}) by $P $
and utilizing (\ref{KL}), we arrive at\\
$ \tilde{Q}(\hat{Q})^{\dagger}P = 0, $ equivalent to (\ref{197}).
\end{proof}
\section{Some properties of  CMP, DMP and MPD   inverses}
We start this section by considering characterizations and properties  of generalized inverses. 

In the below theorem, we describe $A_c $  by equations in (\ref{a101}).
\begin{theorem}
Let $ A \in M_{n}(\mathbb{C})$ with Ind$(A)=k.$ Then $ X= A_c$ is the unique solution of the following equations:
\begin{equation}\label{a101}
A^{k}X=A^{k+1}, \quad \quad AX=XA, \quad \quad XA^{d}X=X.
\end{equation}
\end{theorem}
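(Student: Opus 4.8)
The plan is to verify directly that $X=A_c$ satisfies the three equations, and then prove uniqueness by showing that any solution must coincide with $A_c$. For the verification step, recall that $A_c=AA^dA$, so $A_c$ commutes with $A$ (since $A^d$ is a polynomial in $A$), giving $AX=XA$. For $A^kX=A^{k+1}$: since $A^kA^d=A^{k-1}$ when $\mathrm{Ind}(A)=k$ (more precisely $A^{k+1}A^d=A^k$), we get $A^kA_c=A^kAA^dA=A^k A^d A^2=A^{k-1}A^2=A^{k+1}$, using $A^kA^d=A^{k-1}$. Finally, $XA^dX=X$ follows from the known identity $A_cA^dA_c=AA^dA\cdot A^d\cdot AA^dA=A_c$ (using $A^dAA^d=A^d$ repeatedly, or equivalently that $A_c$ has group inverse $A^d$ restricted appropriately, since $A_c(A^d)A_c=A_c$).

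For uniqueness, suppose $X$ satisfies all three equations in \eqref{a101}. From $AX=XA$ and $A^kX=A^{k+1}$, I would iterate: multiplying $AX=XA$ appropriately and combining with $A^kX=A^{k+1}$ should pin down $A^jX$ for large $j$. The idea is that $A^{k}X = A^{k+1}$ together with commutativity yields $A^{k}X^{m}=A^{k+m}$ by induction, and more usefully $A^{k}(A^dX) = A^{k-1}X$ etc. The key is to use the third equation $XA^dX=X$ to "downsize" $X$ onto the core space: pre- or post-multiplying $XA^dX=X$ by powers of $A$ and using $A^kX=A^{k+1}$ should force $\mathcal{R}(X)\subseteq\mathcal{R}(A^k)=\mathcal{R}(A_c)$ and a dual containment on the null space, after which $A^kX=A^{k+1}=A^kA_c$ and the range condition give $X=A_c$ (since $A^k$ is injective on $\mathcal{R}(A^k)$).

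Concretely, from $XA^dX=X$ we get $\mathcal{R}(X)=\mathcal{R}(XA^dX)\subseteq\mathcal{R}(XA^d)$ and also $X=XA^dX$ shows $X=(XA^d)X$, so $X=(XA^d)^mX$ for all $m\ge 1$; since $XA^d=A^dX$ (from commutativity of $X$ with $A$, hence with $A^d$), we have $X=(A^d)^mX^m\cdot(\text{stuff})$—more cleanly $X=(A^dX)^mX$ is not quite right, so instead I would use $X=XA^dX=A^dX^2$, then iterate $X=A^dX^2=A^d(A^dX^2)X=(A^d)^2X^3=\cdots=(A^d)^{k}X^{k+1}$. Then multiply by $A^{k}$: $A^kX=A^k(A^d)^kX^{k+1}=(A^kA^d)(A^d)^{k-1}X^{k+1}$; using $A^{k}(A^d)^{k}=A^dA$ (the core projector) we get $A^kX=(A^dA)(A^d)^{?}\cdots$, which I would simplify to show $A^kX$ determines $X$ via $X=(A^d)^kA^kX=(A^d)^kA^{k+1}=A^dA^2\cdot(A^d)^{k-1}A^{k-1}=\cdots$—tracking exponents carefully, $(A^d)^kA^{k+1}=A\cdot(A^d)^{k}A^{k}=A\cdot A^dA=A^dA^2=A_c$. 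So once I establish $X=(A^d)^kA^kX$ (which is exactly the statement that $X$ lies in the core subspace), applying $A^kX=A^{k+1}$ yields $X=(A^d)^kA^{k+1}=A_c$.

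The main obstacle I expect is the uniqueness argument's bookkeeping: carefully justifying $X=(A^d)^{k}X^{k+1}$ from $XA^dX=X$ and commutativity, and then correctly simplifying $(A^d)^kA^{k+1}=A_c$ using the index-$k$ identities $A^{k+1}A^d=A^k$ and $A^dA A^d=A^d$. None of this is deep, but getting the exponents right in the iteration is where an error would creep in, so I would write that chain of equalities out explicitly rather than leaving it to the reader.
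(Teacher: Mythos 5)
Your argument is correct and ends up in essentially the same place as the paper's: the paper proves uniqueness by running a symmetric chain $X_{1}=X_{1}A^{d}X_{1}=\cdots=A^{k+2}(A^{d})^{k+2}A=\cdots=X_{2}$, i.e.\ it funnels any two solutions into the common expression $A^{k+2}(A^{d})^{k+2}A$ (which is $A_c$, though the paper never names it), whereas you identify any single solution with $A_c$ directly. Your organization is arguably cleaner: from $XA^{d}X=X$ and $XA=AX$ (hence $XA^{d}=A^{d}X$, since $A^{d}$ is a polynomial in $A$) you get $X=A^{d}X^{2}$, so $\mathcal{R}(X)\subseteq\mathcal{R}(A^{d})=\mathcal{R}(A^{k})$, hence $X=AA^{d}X=(A^{d})^{k}A^{k}X=(A^{d})^{k}A^{k+1}=A_c$. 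That last chain is exactly the one-line justification you were missing for the step ``$X=(A^{d})^{k}A^{k}X$'': it follows from the range inclusion because $AA^{d}=(A^{d})^{k}A^{k}$ is the projector onto $\mathcal{R}(A^{k})$ along $\mathcal{N}(A^{k})$. You should drop the detour through $X=(A^{d})^{k}X^{k+1}$, which is true but unnecessary and is where your bookkeeping got tangled.

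One genuine error to fix in the verification step: the identity $A^{k}A^{d}=A^{k-1}$ is \emph{false} in general when $\mathrm{Ind}(A)=k>1$. Writing $A=A_c+A_n$ with $A_n$ nilpotent of index $k$, one has $A^{k}A^{d}=A_c^{\,k-1}$ while $A^{k-1}=A_c^{\,k-1}+A_n^{\,k-1}$, and $A_n^{\,k-1}\neq 0$. Your parenthetical already contains the correct identity $A^{k+1}A^{d}=A^{k}$, and the computation should be routed through it: $A^{k}A_c=A^{k+1}A^{d}A=A^{k}A=A^{k+1}$. The remaining verifications ($AA_c=A_cA$ and $A_cA^{d}A_c=A_c$) are fine as written.
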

\begin{proof}
It is evident that the matrix $ X= A_c $ fulfills the three equations in the system (\ref{a101}).
Now, we suppose that matrices $X_{1}$ and $ X_{2} $ satisfy (\ref{a101}). Then,
\begin{align*}
X_{1}&=X_{1}A^{d}X_{1}=X_{1}(A^{d})^{2}AX_{1}=X_{1}(A^{d})^{2}X_{1}A
=X_{1}(A^{d})^{k+2}A^{k}X_{1}A
\\&=X_{1}(A^{d})^{k+2}A^{k+1}A=X_{1}A^{k+1}(A^{d})^{k+2}A=A^{k}X_{1}A(A^{d})^{k+2}A
\\&=A^{k+1}A(A^{d})^{k+2}A=A^{k}X_{2}A(A^{d})^{k+2}A
=X_{2}A^{k+1}(A^{d})^{k+2}A
\\&=X_{2}(A^{d})^{k+2}A^{k+1}A
=X_{2}(A^{d})^{k+2}A^{k}X_{2}A=X_{2}(A^{d})^{2}X_{2}A
\\&=X_{2}(A^{d})^{2}AX_{2}=X_{2}A^{d}X_{2}=X_{2}.
\end{align*}
\end{proof}
\begin{proposition}\label{ehGH45}\cite[Theorem 3.2 ]{Ferreyra}
Let $ A \in M_{n}(\mathbb{C})$ with Ind$(A)= k$. Then 
\begin{enumerate}[label=(\roman*)]
\item
$A^{d,\dagger}=A_{\mathcal{R}(A^k) , \mathcal{N}(A^kA^\dagger)}^{(2)}$,
\item
$A^{\dagger , d}=A_{\mathcal{R}(A^{\dagger} A^k) , \mathcal{N}(A^k)}^{(2)}$.
\end{enumerate}
\end{proposition}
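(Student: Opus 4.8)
The statement to establish is Proposition~\ref{ehGH45}, which expresses the DMP and MPD inverses as outer inverses with prescribed range and null space, i.e. in the notation $A^{(2)}_{T,S}$ for the unique matrix $X$ satisfying $XAX=X$, $\mathcal R(X)=T$, $\mathcal N(X)=S$ (whenever $AT\oplus S=\mathbb C^n$). Since this is cited as \cite[Theorem 3.2]{Ferreyra}, the natural plan is to verify the three defining properties directly from the closed forms $A^{d,\dagger}=A^dAA^\dagger$ and $A^{\dagger,d}=A^\dagger A A^d$ (the latter being the MPD inverse, the dual of DMP). I would treat (i) and (ii) in parallel, since (ii) follows from (i) by a transpose/duality argument, or can be redone verbatim with the roles of left and right interchanged.

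\textbf{Step 1 (outer inverse property).} First I would check $X A X = X$ for $X=A^{d,\dagger}=A^dAA^\dagger$. Compute $A^{d,\dagger}AA^{d,\dagger}=A^dAA^\dagger\,A\,A^dAA^\dagger = A^d(AA^\dagger A)A^dAA^\dagger=A^dAA^dAA^\dagger=A^dAA^\dagger=A^{d,\dagger}$, using $AA^\dagger A=A$ and $A^dAA^d=A^d$. The same computation with $X=A^\dagger AA^d$ gives the outer-inverse property for the MPD inverse.

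\textbf{Step 2 (range).} For (i) I would show $\mathcal R(A^{d,\dagger})=\mathcal R(A^k)$. Since $A^{d,\dagger}=A^dAA^\dagger$, clearly $\mathcal R(A^{d,\dagger})\subseteq\mathcal R(A^d)=\mathcal R(A^k)$ (using $\mathcal R(A^d)=\mathcal R(A^k)$, which holds since $A^d=A^k(A^{k+1})^{(1)}\cdots$ — more simply $A^d=A^{2k}(A^d)^{k+1}$ and $A^k=A^{d}A^{k+1}$). Conversely $A^k=A^dA^{k+1}=A^dAA^\dagger(AA^\dagger)^{?}\cdots$; cleaner: $A^k = A^dA\cdot A^k = A^dAA^\dagger A\cdot A^{k-1}\cdot\!\!\!$, hmm — instead use $A^k=A^dA^{k+1}=A^dAA^\dagger A^{k+1}=A^{d,\dagger}A^{k+1}$, so $\mathcal R(A^k)\subseteq\mathcal R(A^{d,\dagger})$. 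Combining, $\mathcal R(A^{d,\dagger})=\mathcal R(A^k)$. For (ii), $A^{\dagger,d}=A^\dagger AA^d$ gives $\mathcal R(A^{\dagger,d})\subseteq\mathcal R(A^\dagger A)=\mathcal R(A^*)$, but one wants $\mathcal R(A^\dagger A^k)$; note $A^\dagger A^k=A^\dagger A\,A^{k-1}=A^\dagger A A^d A^k\cdot\!\!\!$ — rather, $A^\dagger A^k = A^\dagger A A^\dagger A^k = A^\dagger A A^d A^{k+1}A^\dagger\cdots$; the clean identity is $A^\dagger A^k=A^\dagger AA^d\cdot A^{k+1}(A^\dagger)^{?}$ — I would instead write $\mathcal R(A^{\dagger,d})=\mathcal R(A^\dagger AA^d)=\mathcal R(A^\dagger A^{d}A^{k+1})\cdots$; since $A^d=A^dA^kA^d\cdot\!\!\!$, use $\mathcal R(A^d)=\mathcal R(A^k)$ to get $\mathcal R(A^\dagger AA^d)=\mathcal R(A^\dagger A A^k)=\mathcal R(A^\dagger A^{k+1})=\mathcal R(A^\dagger A^k)$ where the last step uses $A^\dagger A^{k+1}=A^\dagger A A^k$ and $\mathcal R(A^\dagger A^{k+1})\supseteq$ and $\subseteq \mathcal R(A^\dagger A^k)$ via $A^\dagger A^k=A^\dagger A\cdot A^\dagger A^k$ and $A^k=A^dA^{k+1}$.

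\textbf{Step 3 (null space).} For (i), $\mathcal N(A^{d,\dagger})=\mathcal N(A^dAA^\dagger)$. Since $A^k$ and $A^{k+1}$ have the same rank and $\mathcal N(A^dA)=\mathcal N(A^k)$, and $A^\dagger$ is injective on $\mathcal R(A)$, I would compute: $x\in\mathcal N(A^{d,\dagger})\iff A^dAA^\dagger x=0\iff A^\dagger x\in\mathcal N(A^dA)=\mathcal N(A^k)\iff A^kA^\dagger x=0\iff x\in\mathcal N(A^kA^\dagger)$, using injectivity facts carefully — actually the chain $A^dAA^\dagger x=0\Rightarrow A^kA^\dagger x = A^kA^dAA^\dagger x\cdot\!\!\!$ wait, $A^k = A^{k}A^dA$ so $A^kA^\dagger x = A^kA^dAA^\dagger x=A^k(A^dAA^\dagger)x=0$ immediately, and conversely $A^kA^\dagger x=0\Rightarrow A^dAA^\dagger x=(A^d)^{k+1}A^{k+1}A^\dagger x\cdots$ need $A^dA=(A^d)^{k+1}A^{k+1}$, yes since $A^d=(A^d)^{k+1}A^{k}$, so $A^dAA^\dagger x=(A^d)^{k+1}A^{k}AA^\dagger x=(A^d)^{k+1}A^{k}A^\dagger A x\cdots$ — here the commuting relation $A^kAA^\dagger$ vs $A^kA^\dagger A$ needs care, so I'd instead argue $A^dAA^\dagger x = (A^d)^{k+1}A^{k+1}A^\dagger x$ directly from $A^dA=(A^d)^{k+1}A^{k+1}$, and $A^{k+1}A^\dagger x = A\cdot A^kA^\dagger x=0$. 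Hence $\mathcal N(A^{d,\dagger})=\mathcal N(A^kA^\dagger)$. For (ii) the dual computation gives $\mathcal N(A^{\dagger,d})=\mathcal N(A^k)$, using $A^\dagger A y=0\iff Ay\in\mathcal N(A^\dagger)=\mathcal N(A^*)\Rightarrow A^*Ay=0\Rightarrow Ay=0$, combined with $\mathcal N(A^d)=\mathcal N(A^k)$.

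\textbf{Step 4 (uniqueness/consistency).} Finally I would note that the transversality condition $A\cdot T\oplus S=\mathbb C^n$ holds in each case (for (i), $A\mathcal R(A^k)\oplus\mathcal N(A^kA^\dagger)=\mathcal R(A^{k+1})\oplus\mathcal N(A^kA^\dagger)$, and a rank count using $\operatorname{rank}(A^{k+1})=\operatorname{rank}(A^k)=\operatorname{rank}(A^kA^\dagger)$ plus the inclusion $\mathcal R(A^{k+1})\cap\mathcal N(A^kA^\dagger)=\{0\}$ gives the direct sum), so that the outer inverse with that range and null space is \emph{unique} and equals $A^{d,\dagger}$; symmetrically for (ii).

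\textbf{Main obstacle.} The genuinely delicate point is Step~3, the null-space identification, because $A^\dagger$ does not commute with powers of $A$, so one cannot freely shuffle $A^\dagger$ past $A^k$; the argument must route every implication through the Drazin identities $A^d=(A^d)^{k+1}A^k$ and $A^k=A^dA^{k+1}$ together with $AA^\dagger A=A$, being careful about which side $A^\dagger$ sits on. Everything else — the outer-inverse property and the range computations — is a short manipulation of the idempotents $AA^\dagger$, $A^\dagger A$, $AA^d$ and the identity $\mathcal R(A^d)=\mathcal R(A^k)$, $\mathcal N(A^d)=\mathcal N(A^k)$.
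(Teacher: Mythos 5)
Your proposal is correct, but note that the paper itself offers no proof of this proposition: it is stated verbatim as a quotation of \cite[Theorem 3.2]{Ferreyra}, so there is nothing internal to compare against. What you have produced is a self-contained verification from the closed forms $A^{d,\dagger}=A^dAA^\dagger$ and $A^{\dagger,d}=A^\dagger AA^d$, and every step checks out: the outer-inverse identity in Step 1 is immediate from $AA^\dagger A=A$ and $A^dAA^d=A^d$; the range identifications in Step 2 reduce correctly to $\mathcal{R}(A^d)=\mathcal{R}(A^k)$ together with $A^k=A^{d,\dagger}A^{k+1}$ (for (i)) and $\mathcal{R}(A^\dagger A^{k+1})=\mathcal{R}(A^\dagger A^k)$ (for (ii)); and the null-space chain in Step 3, routed through $A^k=A^kA^dA$ and $A^dA=(A^d)^{k+1}A^{k+1}$, is exactly the right way to avoid illegally commuting $A^\dagger$ past powers of $A$. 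Two small remarks. First, the exposition of Step 2(ii) is cluttered with abandoned attempts; the clean chain is simply $\mathcal{R}(A^\dagger AA^d)=\mathcal{R}(A^\dagger AA^k)=\mathcal{R}(A^\dagger A^{k+1})=\mathcal{R}(A^\dagger A^k)$, and only that should survive into a final write-up. Second, Step 4 is partly redundant: once you have exhibited a matrix $X$ with $XAX=X$, $\mathcal{R}(X)=T$ and $\mathcal{N}(X)=S$, any other outer inverse $Y$ with the same range and null space satisfies $Y=YAY$ with $YA$ a projector onto a complement of $S$ along which $X$ and $Y$ agree, so $X=Y$ follows without separately checking the transversality condition $A T\oplus S=\mathbb{C}^n$ (which is then automatic); your rank count is nevertheless a harmless sanity check.
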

\begin{lemma}\label{ASS}
Let  $ A \in M_{n}(\mathbb{C})$ with Ind$(A)= k$. Then 
\begin{equation*}
A^{c,\dagger}=A^{\dagger ,d}A^{d,\dagger}~~~~~~ \Leftrightarrow \quad A^{k+1}=A^k ~~~\Leftrightarrow ~~~ \mathcal{R}(A^k) \subseteq \mathcal{N}(I- A).
\end{equation*}
\end{lemma}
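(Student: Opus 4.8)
The statement is a two-part equivalence, so the plan is to establish each arrow of the chain
\[
A^{c,\dagger}=A^{\dagger ,d}A^{d,\dagger}\ \Longleftrightarrow\ A^{k+1}=A^k\ \Longleftrightarrow\ \mathcal{R}(A^k)\subseteq\mathcal{N}(I-A)
\]
separately, and the easiest entry point is the Hartwig--Spindelb\"ock form (\ref{h}) together with the explicit block expressions (\ref{md}), (\ref{cm}), and (\ref{mdhkp}) for $A^{\dagger}$, $A^d$, $A_c$, $A^{d,\dagger}$, $A^{\dagger ,d}$. The second equivalence is essentially a tautology: for any vector $x$, $A^{k+1}x=A^kx$ says exactly that $A^kx\in\mathcal{N}(I-A)$, so ranging $x$ over $\mathbb{C}^n$ gives $\mathcal{R}(A^k)\subseteq\mathcal{N}(I-A)$; conversely if $\mathcal{R}(A^k)\subseteq\mathcal{N}(I-A)$ then $(I-A)A^k=0$, i.e. $A^{k+1}=A^k$. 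I would dispatch this in one or two sentences.

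For the first equivalence, I would compute both sides in the block form. Recall $A^{c,\dagger}=A^{\dagger}A_cA^{\dagger}$; using (\ref{md}) and (\ref{cm}) one gets a block matrix whose only nonzero block (top-left) is $Q^{*}\Sigma^{-1}\Sigma\hat Q\Sigma Q Q^{*}\Sigma^{-1}=Q^{*}\hat Q\Sigma Q Q^{*}\Sigma^{-1}$, and from the $A^{\dagger ,d}$, $A^{d,\dagger}$ formulas in (\ref{mdhkp}) the product $A^{\dagger ,d}A^{d,\dagger}$ is again supported only in the first column, with top-left block $Q^{*}\hat Q(\Sigma Q)^{d}$ (after simplification) and bottom-left block $P^{*}\hat Q(\Sigma Q)^{d}$. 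Comparing the two matrices reduces the identity $A^{c,\dagger}=A^{\dagger ,d}A^{d,\dagger}$ to a small system of equations in the blocks $\Sigma$, $Q$, $P$, $\hat Q=Q(\Sigma Q)^d$. I expect this system, after using $QQ^{*}+PP^{*}=I_r$ (\ref{KL}) to eliminate $P$, to collapse to $(\Sigma Q)\hat Q = \hat Q$ or an equivalent relation forcing $\Sigma Q$ to act as the identity on the relevant invariant subspace; translating back through (\ref{h}) this is precisely $A^{k+1}=A^k$ on $\mathcal{R}(A^k)$, hence (using $\mathcal{N}(A^{k+1})=\mathcal{N}(A^k)$ from the index) $A^{k+1}=A^k$ globally.

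An alternative, cleaner route for the first equivalence avoids coordinates: note $A^{\dagger ,d}A^{d,\dagger}=A^{\dagger}A^dA\cdot A^dAA^{\dagger}=A^{\dagger}A^d(AA^d)A A^{\dagger}=A^{\dagger}(A^d)A_c A^{\dagger}/A \cdots$ — more precisely, since $A^{d}AA^{d}=A^d$ and $A_c=AA^dA$, one has $A^{\dagger ,d}A^{d,\dagger}=A^{\dagger}A^{d}A^{2}A^{d}AA^{\dagger}=A^{\dagger}A^{d}A_cA^{d}AA^{\dagger}$, while $A^{c,\dagger}=A^{\dagger}A_cA^{\dagger}$. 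The difference involves the factor $A^{d}A_cA^{d}A=A^{d}A$, so $A^{\dagger ,d}A^{d,\dagger}=A^{\dagger}A^{d}A_cA^{\dagger}$ can be compared with $A^{\dagger}A_cA^{\dagger}$; their equality is governed by whether $A^dA$ acts trivially, and since $A_c=A^{d}A^{2}$ has $\mathcal{R}(A_c)=\mathcal{R}(A^k)$, this is controlled by the action of $A$ on $\mathcal{R}(A^k)$. The main obstacle is bookkeeping: making the reduction rigorous requires care with which generalized inverse absorbs which projector, and I anticipate the coordinate computation in the block form is the safest way to pin down exactly the equation $A^{k+1}=A^k$ without an off-by-one error in the index, so I would carry out that computation as the core of the proof and present the equivalence with $\mathcal{N}(I-A)$ as an immediate rephrasing.
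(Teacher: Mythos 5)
Your treatment of the second equivalence is fine, and the general strategy for the first one (compute both sides and compare) is workable, but as written the proposal has two concrete problems. First, the ``cleaner'' coordinate-free sketch contains an algebra error that would contradict the lemma: you assert $A^{\dagger ,d}A^{d,\dagger}=A^{\dagger}A^{d}A^{2}A^{d}AA^{\dagger}$, but $A^{d}A^{2}A^{d}A=(A^{d}A)(AA^{d})A=(AA^{d})^{2}A=AA^{d}A=A_{c}$, so that expression equals $A^{\dagger}A_{c}A^{\dagger}=A^{c,\dagger}$ identically, which would make the first condition hold for \emph{every} $A$ --- false. The correct simplification is $A^{\dagger ,d}A^{d,\dagger}=A^{\dagger}AA^{d}\cdot A^{d}AA^{\dagger}=A^{\dagger}(AA^{d})A^{\dagger}$, which is one factor of $A$ short of $A^{c,\dagger}=A^{\dagger}(AA^{d}A)A^{\dagger}$; your later expression $A^{\dagger}A^{d}A_{c}A^{\dagger}$ does equal this, but the intermediate identity you wrote does not. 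Second, in your primary (block) route the decisive step is only ``expected.'' Carrying the computation out, both sides are supported in the first block column, $A^{c,\dagger}$ having blocks $Q^{*}\hat{Q}$, $P^{*}\hat{Q}$ and $A^{\dagger ,d}A^{d,\dagger}$ having blocks $Q^{*}\hat{Q}(\Sigma Q)^{d}$, $P^{*}\hat{Q}(\Sigma Q)^{d}$, so via (\ref{KL}) the identity collapses to the single relation $\hat{Q}=\hat{Q}(\Sigma Q)^{d}$, equivalently $\Sigma Q(\Sigma Q)^{d}=(\Sigma Q)^{d}$ --- not $(\Sigma Q)\hat{Q}=\hat{Q}$ as you guess. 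You would then still have to show that this one relation is equivalent to \emph{both} block equations hidden in $A^{k+1}=A^{k}$, namely $(\Sigma Q)^{k+1}=(\Sigma Q)^{k}$ and $(\Sigma Q)^{k}\Sigma P=(\Sigma Q)^{k-1}\Sigma P$. That is precisely the bookkeeping you defer, so the proof is not complete as it stands.

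For comparison, the paper's proof needs neither the decomposition nor any of this: writing $A^{c,\dagger}=A^{\dagger}AA^{d}AA^{\dagger}$ and $A^{\dagger ,d}A^{d,\dagger}=A^{\dagger}AA^{d}A^{d}AA^{\dagger}$, left-multiplication by $A$ (reversible, since left-multiplying back by $A^{\dagger}$ and using $A^{\dagger}AA^{\dagger}=A^{\dagger}$ recovers the original identity) turns the equality into $AA^{d,\dagger}=A^{d,\dagger}$, i.e. $(I-A)A^{d,\dagger}=0$, and then $\mathcal{R}(A^{d,\dagger})=\mathcal{R}(A^{k})$ from Proposition~\ref{ehGH45} gives $\mathcal{R}(A^{k})\subseteq\mathcal{N}(I-A)$ directly, hence $A^{k+1}=A^{k}$. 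If you replace your faulty identity by $A^{\dagger ,d}A^{d,\dagger}=A^{\dagger}AA^{d}A^{\dagger}$ and cancel the outer $A^{\dagger}$ in this way, your coordinate-free route becomes essentially that argument.
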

\begin{proof}
 By Proposition \ref{ehGH45}$(i)$, we get 
\begin{align*}
A^{c,\dagger}=A^{\dagger ,d}A^{d,\dagger}&\Leftrightarrow A^{\dagger}AA^{d}AA^{\dagger}=A^{\dagger}AA^{d}A^{d}AA^{\dagger}\\
&\Leftrightarrow AA^{\dagger}AA^{d}AA^{\dagger}=AA^{\dagger}A^{d}AA^{\dagger}\\
&\Leftrightarrow AA^{d,\dagger}=A^{d,\dagger}\\
&\Leftrightarrow (I-A)A^{d,\dagger}=0\\
&\Leftrightarrow \mathcal{R}(A^{k})=\mathcal{R}(A^{d,\dagger}) \subseteq \mathcal{N}(I-A) \\
&\Leftrightarrow A^{k+1}=A^k. 
\end{align*}
\end{proof}

The following theorem gives the aforementioned relationships in terms of mainly the Moore-Penrose inverse.
\begin{theorem} 
Let  $ A \in M_{n}(\mathbb{C})$ with Ind$(A)= k$. Then 
\begin{enumerate}[label=(\roman*)]
\item
$(A^{d,\dagger})^{\dagger}A^{d}=A^{d}(A^{d,\dagger})^{\dagger} $~~~~~ $\Longleftrightarrow$ ~~~ $ (\Sigma Q)^{d} $ is EP and 
$ Q \Sigma P = 0. $
\item
$A^{c,\dagger}=A^{\dagger ,d}A $~~~~~~~~~~~~~~~~~~~~~~$\Longleftrightarrow$ ~~~ 
$A^kA^\dagger = A^k$.
\item
$A^{c,\dagger}=AA^{d, \dagger}$~~~~~~~~~~~~~~~~~~~~~~$\Longleftrightarrow$ ~~~ 
$A^\dagger A^k = A^k$.
\item
$A^{c,\dagger}=A^{\dagger ,d}A^{*} $~~~~~~~~~~~~~~~~~~~~$\Longleftrightarrow$ ~~~ 
$A^k(A^\dagger)^*=A^k$.
\item
$A^{c,\dagger}=A^{*}A^{d, \dagger} $~~~~~~~~~~~~~~~~~~~ $\Longleftrightarrow$ ~~~
$(A^\dagger)^* A^k=A^k$.
\end{enumerate}
\end{theorem}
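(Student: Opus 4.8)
The plan is to settle item~(i) with the Hartwig--Spindelb\"ock apparatus of (\ref{h})--(\ref{mdhkp}) and to settle items~(ii)--(v) by algebraic cancellations resting on the representations $A^{d,\dagger}=A^{d}AA^{\dagger}$, $A^{\dagger,d}=A^{\dagger}AA^{d}$, $A^{c,\dagger}=A^{\dagger}A_{c}A^{\dagger}$ and $A_{c}=AA^{d}A$, on the commutativity $A^{d}A=AA^{d}$, and on the fact that $P_{A}=AA^{\dagger}$, $Q_{A}=A^{\dagger}A$ are the orthogonal projectors onto $\mathcal{R}(A)$, $\mathcal{R}(A^{*})$.

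For (i), write $A$ as in (\ref{h}). By (\ref{mdhkp}) the matrix $A^{d,\dagger}=U\,\mathrm{diag}((\Sigma Q)^{d},0)\,U^{*}$ is block diagonal up to the unitary conjugation, so $(A^{d,\dagger})^{\dagger}=U\,\mathrm{diag}(((\Sigma Q)^{d})^{\dagger},0)\,U^{*}$ because the four Penrose equations decouple blockwise. Multiplying this against $A^{d}$ from (\ref{md}), both $(A^{d,\dagger})^{\dagger}A^{d}$ and $A^{d}(A^{d,\dagger})^{\dagger}$ come out block upper triangular with zero second row, so their equality is equivalent to two block identities: the $(1,1)$ blocks agree iff $((\Sigma Q)^{d})^{\dagger}(\Sigma Q)^{d}=(\Sigma Q)^{d}((\Sigma Q)^{d})^{\dagger}$, i.e. $(\Sigma Q)^{d}$ is EP, and the $(1,2)$ blocks agree iff $((\Sigma Q)^{d})^{\dagger}((\Sigma Q)^{d})^{2}\Sigma P=0$. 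Under the EP condition $((\Sigma Q)^{d})^{\dagger}((\Sigma Q)^{d})^{2}=(\Sigma Q)^{d}$, so the second identity becomes $(\Sigma Q)^{d}\Sigma P=0$, and I would then match this with the stated condition $Q\Sigma P=0$ using (\ref{KL}) (the implication $Q\Sigma P=0\Rightarrow(\Sigma Q)\Sigma P=0\Rightarrow(\Sigma Q)^{d}\Sigma P=0$ is immediate; the converse is where $QQ^{*}+PP^{*}=I_{r}$ and EP-ness of $(\Sigma Q)^{d}$ must be used). This matching is the main computational obstacle of the theorem.

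For (ii) and (iii) note $A^{\dagger,d}A=A^{\dagger}(AA^{d}A)=A^{\dagger}A_{c}$, $AA^{d,\dagger}=(AA^{d}A)A^{\dagger}=A_{c}A^{\dagger}$, while $A^{c,\dagger}=A^{\dagger}A_{c}A^{\dagger}$. Hence $A^{c,\dagger}=A^{\dagger,d}A$ reads $A^{\dagger}A_{c}A^{\dagger}=A^{\dagger}A_{c}$; left-multiplying by $A$ and using $AA^{\dagger}A_{c}=A_{c}$ gives $A_{c}A^{\dagger}=A_{c}$, and left-multiplying by $A^{\dagger}$ reverses the step, so the two are equivalent. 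Now $A_{c}=A^{2}A^{d}$ and the Drazin identity $A^{d}=(A^{d})^{k+1}A^{k}$ give $A_{c}A^{\dagger}=A^{2}(A^{d})^{k+1}A^{k}A^{\dagger}$, so $A^{k}A^{\dagger}=A^{k}\Rightarrow A_{c}A^{\dagger}=A_{c}$; conversely $A_{c}A^{\dagger}=A_{c}$ yields $A^{k}A^{\dagger}=A_{c}^{k}A^{\dagger}=A_{c}^{k-1}(A_{c}A^{\dagger})=A_{c}^{k-1}A_{c}=A_{c}^{k}=A^{k}$, where $A_{c}^{k}=A^{k}$ since $A=A_{c}+A_{n}$ with $A_{n}$ nilpotent of index $\le k$ and $A_{c}A_{n}=A_{n}A_{c}=0$. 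Item (iii) is the right-handed mirror: reduce to $A^{\dagger}A_{c}=A_{c}$ (using $A_{c}A^{\dagger}A=A_{c}$, $A_{c}=A^{d}A^{2}$, $A^{d}=A^{k}(A^{d})^{k+1}$) and then to $A^{\dagger}A^{k}=A^{k}$.

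For (iv) and (v) the same cancellation gives $A^{c,\dagger}=A^{\dagger,d}A^{*}\Longleftrightarrow A_{c}A^{\dagger}=AA^{d}A^{*}$, i.e. $AA^{d}(AA^{\dagger}-A^{*})=0$; since $AA^{d}$ is the spectral idempotent with $\mathcal{N}(AA^{d})=\mathcal{N}(A^{k})$ and $\mathcal{R}(AA^{d})=\mathcal{R}(A^{k})$, this is equivalent to $A^{k}(AA^{\dagger}-A^{*})=0$, that is $A^{k+1}A^{\dagger}=A^{k}A^{*}$. It then remains to check $A^{k+1}A^{\dagger}=A^{k}A^{*}\Longleftrightarrow A^{k}(A^{\dagger})^{*}=A^{k}$: right-multiplying the latter by $A^{*}$ and using $(A^{\dagger})^{*}A^{*}=(AA^{\dagger})^{*}=AA^{\dagger}$ gives one direction, while right-multiplying the former by $(A^{\dagger})^{*}$ and using $A^{*}(A^{\dagger})^{*}=A^{\dagger}A$, $A^{k}A^{\dagger}A=A^{k}$ (since $A^{\dagger}A$ fixes $\mathcal{R}(A^{*})\supseteq\mathcal{R}((A^{*})^{k})=\mathcal{R}((A^{k})^{*})$) and $AA^{\dagger}(A^{\dagger})^{*}=(A^{\dagger})^{*}$ (since $A^{\dagger}AA^{\dagger}=A^{\dagger}$ and $AA^{\dagger}$ is Hermitian) gives the other. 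Item (v) is the dual, with the multiplications on the opposite side, yielding $A^{\dagger}A^{k+1}=A^{*}A^{k}\Longleftrightarrow(A^{\dagger})^{*}A^{k}=A^{k}$. Beyond the reduction in (i), everything reduces to routine bookkeeping with $AA^{\dagger}A=A$, $A^{\dagger}AA^{\dagger}=A^{\dagger}$ and $A^{d}A=AA^{d}$.
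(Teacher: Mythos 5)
Your treatment of items (ii)--(v) is correct and follows essentially the same route as the paper: reduce each identity to a one-sided condition on $A_c$ (or on $AA^{d}$) by cancelling an outer factor $A$ or $A^{\dagger}$, and then translate that condition into a statement about $A^{k}$. The only real difference is how the translation is done: where the paper invokes $\mathcal{N}(A_c)=\mathcal{N}(A^{k})$ and $\mathcal{N}(A^{\dagger ,d})=\mathcal{N}(A^{k})$ (the latter from Proposition \ref{ehGH45}), you use $A_c^{k}=A^{k}$ from the core--nilpotent decomposition together with the spectral projector $AA^{d}$. Both are sound, and the cancellations you indicate (including the adjoint manipulations in (iv) and (v)) check out.

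Item (i) is where the genuine gap lies. Your block computation agrees with the paper's and correctly reduces the commutation $(A^{d,\dagger})^{\dagger}A^{d}=A^{d}(A^{d,\dagger})^{\dagger}$ to the pair of conditions ``$(\Sigma Q)^{d}$ is EP'' and ``$(\Sigma Q)^{d}\Sigma P=0$.'' But you then only assert that the passage from $(\Sigma Q)^{d}\Sigma P=0$ back to $Q\Sigma P=0$ ``must use'' (\ref{KL}) and EP-ness, without supplying the argument --- and this step cannot be supplied. Since $\mathcal{N}((\Sigma Q)^{d})=\mathcal{N}((\Sigma Q)^{m})$ with $m=\mathrm{Ind}(\Sigma Q)$, while $\mathcal{N}(Q)=\mathcal{N}(\Sigma Q)$, the implication $(\Sigma Q)^{d}\Sigma P=0\Rightarrow Q\Sigma P=0$ requires $\mathrm{Ind}(\Sigma Q)\leqslant 1$. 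For instance, take $\Sigma=I_{2}$, $Q=\left(\begin{smallmatrix}0&c\\0&0\end{smallmatrix}\right)$, $P=\left(\begin{smallmatrix}\sqrt{1-c^{2}}&0\\0&1\end{smallmatrix}\right)$ with $0<c<1$: then (\ref{KL}) holds, $(\Sigma Q)^{d}=0$ is EP and annihilates $\Sigma P$, yet $Q\Sigma P\neq 0$. So the equivalence you need (and which the paper obtains via its one-line appeal to the group inverse of $\Sigma Q$, implicitly assuming $\mathrm{Ind}(\Sigma Q)\leqslant 1$) is false in general; what the block computation actually establishes is the equivalence with ``$(\Sigma Q)^{d}$ is EP and $(\Sigma Q)^{d}\Sigma P=0$,'' and the further simplification to $Q\Sigma P=0$ holds only under the extra hypothesis $\mathrm{Ind}(\Sigma Q)\leqslant 1$. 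Your instinct that this matching is ``the main computational obstacle'' was well founded, but leaving it unproved leaves your proof of (i) incomplete.
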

\begin{proof}
$(i)$   By \cite[Proposition 2.15 (b)]{Malik}, (\ref{KL}) and (\ref{md}), we get
\begin{equation*}
(A^{d,\dagger})^{\dagger}A^{d}=U \left( \begin{array}{cc}
((\Sigma Q)^{d})^{\dagger}(\Sigma Q)^{d} &((\Sigma Q)^{d})^{\dagger}((\Sigma Q)^{d})^{2}\Sigma P \\ 
0 & 0
\end{array} \right) U^{*}, 
\end{equation*}
\begin{equation*}
A^{d}(A^{d,\dagger})^{\dagger}=U \left( \begin{array}{cc}
(\Sigma Q)^{d}((\Sigma Q)^{d})^{\dagger} & 0 \\ 
0 & 0
\end{array} \right) U^{*}. 
\end{equation*}
Therefore, $(A^{d,\dagger})^{\dagger}A^{d}=A^{d}(A^{d ,\dagger})^{\dagger}$ if and only if
\begin{equation*}
((\Sigma Q)^{d})^{\dagger}(\Sigma Q)^{d}=(\Sigma Q)^{d}((\Sigma Q)^{d})^{\dagger}\quad \& \quad ((\Sigma Q)^{d})^{\dagger}((\Sigma Q)^{d})^{2}\Sigma P=0.
\end{equation*}
The first equation states that $ (\Sigma Q)^{d} $ is EP (since it commutes with its Moore-Penrose inverse). Pre-multiplying the equation $ ((\Sigma Q)^{d})^{\dagger}((\Sigma Q)^{d})^{2}\Sigma P=0 $ by 
$ \Sigma \hat{Q} $ and using $ (\Sigma Q)^{d}((\Sigma Q)^{d})^{\dagger}(\Sigma Q)^{d}=(\Sigma Q)^{d},$ we obtain $ ( \Sigma Q )^{d} \Sigma P = 0. $ 
Since $(\Sigma Q)^d$ has index at most 1, it coincides with $(\Sigma Q)^\#$. So, the expression $(\Sigma Q)^d \Sigma P=0$ is equivalent to the more simplified one given by $Q \Sigma P=0$.

$ (ii) $ It is clear that 
$A^{c,\dagger}=A^{\dagger ,d}A \Leftrightarrow A^{\dagger}AA^{d}AA^{\dagger}=A^{\dagger}AA^{d}A$
$\Leftrightarrow$ $AA^{\dagger}AA^{d}AA^{\dagger}=AA^{\dagger}AA^{d}A$ 
$\Leftrightarrow$ $A_{c}A^{\dagger}=A_{c}$
$\Leftrightarrow$ $A_{c}(I-A^{\dagger})=0$
$\Leftrightarrow$ $\mathcal{R}(I-A^{\dagger}) \subseteq \mathcal{N}(A_{c})$.
Moreover,
\begin{align*}
\mathcal{N}(A_{c})&=\mathcal{N}(AA^{d}A)\subseteq \mathcal{N}(A^kA^{d}AA^{d}A)=\mathcal{N}(A^{k})
\subseteq \mathcal{N}((A^{d})^{k}A^{k})\\
&=\mathcal{N}(A^{d}A)\subseteq \mathcal{N}(A_{c}).
\end{align*}
Therefore, $ \mathcal{N}(A_{c})=\mathcal{N}(A^{k}) $. Now, we have that $ \mathcal{R}(I-A^{\dagger}) \subseteq \mathcal{N}(A^{k}) \Leftrightarrow A^kA^\dagger = A^k .$

$ (iii) $ It is similar to the proof of $(ii)$.

$ (iv) $ By Proposition \ref{ehGH45}$(ii)$, we obtain
\begin{align*}
A^{c,\dagger}=A^{\dagger ,d}A^{*}&\Leftrightarrow A^{\dagger}AA^{d}AA^{\dagger}=A^{\dagger}AA^{d}A^{*}\\
&\Leftrightarrow A^{\dagger}AA^{d}AA^{\dagger}(A^{\dagger})^{*}=A^{\dagger}AA^{d}A^{*}(A^{\dagger})^{*}\\
&\Leftrightarrow A^{\dagger}AA^{d}(A^{\dagger}AA^{\dagger})^{*}=A^{\dagger}AA^{d}(A^{\dagger}A)^{*}\\
&\Leftrightarrow A^{\dagger ,d}(A^{\dagger})^{*}=A^{\dagger ,d}\\
&\Leftrightarrow A^{\dagger ,d}(I-(A^{\dagger})^{*})=0\\
&\Leftrightarrow \mathcal{R}(I-(A^{\dagger})^{*}) \subseteq \mathcal{N}(A^{\dagger ,d})=\mathcal{N}(A^{k})\\
&\Leftrightarrow A^k(A^\dagger)^*=A^k.
\end{align*}
$ (v) $ By Proposition \ref{ehGH45}$(i)$ and similar to the proof of (iv).
\end{proof}

Item (i) in theorem above is equivalent to $ A^{d,\dag} $ is EP matrix and $Q \Sigma P=0$ \\
\cite[Proposition 2.15]{Malik}.

The following theorem gives the aforementioned relationships in terms of mainly the  core part of the matrix $A$.
\begin{theorem} 
Let $ A \in M_{n}(\mathbb{C})$ with Ind$(A)= k$. Then 
\begin{enumerate}[label=(\roman*)]
\item
$A^{d,\dagger}A_c=A_{c}A^{d,\dagger} $~~~~~~~~~ $\Longleftrightarrow$ ~~~ 
$\mathcal{N}(A^{*})\subseteq \mathcal{N}(A^k) $.
\item
$A^{\dagger ,d}A_c=A_{c}A^{\dagger ,d}$~~~~~~~~~~$\Longleftrightarrow$ ~~~ 
$\mathcal{R}(A^k)\subseteq \mathcal{R}(A^{*}) $.
\item
$A_c=A^{d, \dagger}A_c $~~~~~~~~~~~~~~~
 $\Longleftrightarrow$ ~~~ 
$A^k=A^{k+1}$.
\item
$A_c=A^{\dagger ,d}A_c $~~~~~~~~~~~~~~~$\Longleftrightarrow$ ~~~  
$A^{\dagger}A^{k} = A^k$.
\item
$ A_c=A^{c,\dagger}A_c $~~~~~~~~~~~~~~ 
$\Longleftrightarrow$ ~~~ 
$A{^\dagger}A^k=A^k$.
\end{enumerate}
\end{theorem}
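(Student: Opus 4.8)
The plan is to prove each of the five equivalences by the same two-step pattern used in the previous theorem: first reduce the left-hand matrix equation to a cleaner identity using the factorizations $A^{d,\dagger}=A^dAA^\dagger$, $A^{\dagger,d}=A^\dagger AA^d$, $A^{c,\dagger}=A^\dagger A_cA^\dagger$, and $A_c=AA^dA$; then translate that identity into the stated range/null-space inclusion (or power identity) by exploiting the fact that $\mathcal N(A_c)=\mathcal N(A^k)$ and $\mathcal R(A_c)=\mathcal R(A^k)$, which were established in the proof of item $(ii)$ of the preceding theorem.

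For $(i)$: I would compute $A^{d,\dagger}A_c=A^dAA^\dagger A_c$ and $A_cA^{d,\dagger}=A_cA^dAA^\dagger$. Using $A^dAA^\dagger\cdot AA^dA=A^dA^2A^\dagger$ (since $A^\dagger A\cdot A^dA=A^dA$ by commutation of $A^\dagger A$ with polynomials in $A$... actually I would be careful here) one side simplifies to something involving $A^\dagger$ acting on the right, the other to $A_c$ itself; the equality should collapse to a condition of the form $A^\dagger A_c = A_c$ or its adjoint, which by $\mathcal N(A_c)=\mathcal N(A^k)$ becomes $\mathcal N(A^*)\subseteq\mathcal N(A^k)$. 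For $(ii)$ the symmetric computation with $A^{\dagger,d}$ should produce a condition like $A_cA^\dagger A=A_c$, equivalently $\mathcal R(A^k)=\mathcal R(A_c)\subseteq\mathcal R(A^\dagger A)=\mathcal R(A^*)$. For $(iii)$: $A^{d,\dagger}A_c=A^dAA^\dagger\cdot AA^dA=A^dA^2A^\dagger A=A^dA^2$ (absorbing $A^\dagger A$ into the polynomial $A^dA^2$), wait — more carefully, $A^dA^2A^\dagger A = A^dA^2$ only if $A^\dagger A$ fixes $\mathcal R((A^dA^2)^*)$, which need not hold; instead I expect the reduction $A^{d,\dagger}A_c = AA^d A \cdot$ something, so that $A_c=A^{d,\dagger}A_c$ becomes $A_c=A_cA\cdot(\text{stuff})$ giving $A_c=A_cA$ hence $A^{k+1}=A^kA=A_cA\cdot(A^d)^{k-1}\cdots$, i.e. $A^k=A^{k+1}$. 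For $(iv)$: $A_c=A^{\dagger,d}A_c=A^\dagger AA^dA\cdot AA^dA=A^\dagger A_c$ directly (since $AA^dA\cdot AA^dA=AA^dA\cdot A^? $; actually $A^dA\cdot AA^dA = A^dA^2A^dA$ and $AA^dA^dA = A_cA^d A$...). The clean route: $A^{\dagger,d}A_c=A^\dagger A\cdot A^d A\cdot A_c=A^\dagger A\cdot A^dA\cdot AA^dA=A^\dagger A\cdot A^dA^2A^dA=A^\dagger A\cdot A_c$ using $A^dA^2A^dA=A_c$ — hmm that needs checking but is plausible since $A^dA\cdot A = A^dA^2$ and then $A^dA^2\cdot A^dA = (A^dA)(AA^dA)=A^dA\cdot A_c$... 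The upshot I expect: $A_c=A^{\dagger,d}A_c\iff A^\dagger A_c=A_c\iff \mathcal R(I-A^\dagger A)\subseteq\mathcal N(A_c^*)=\mathcal N((A^k)^*)$... no, $\iff A^\dagger A^k=A^k$ using $\mathcal R(A_c)=\mathcal R(A^k)$. For $(v)$: $A^{c,\dagger}A_c=A^\dagger A_cA^\dagger\cdot A_c$; since $A_cA^\dagger A_c$ is itself a messy object, I would instead use $A^{c,\dagger}=A^\dagger A_cA^\dagger$ and note $A_cA^\dagger\cdot A_c$... the equality $A_c=A^{c,\dagger}A_c$ should reduce to $A^\dagger A_c=A_c$ (same as $(iv)$'s core condition), hence to $A^\dagger A^k=A^k$, consistent with the stated right-hand side.

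The main obstacle, and the step I would spend real care on, is the algebraic simplification in each case: repeatedly one must know when a factor $A^\dagger A$ or $AA^\dagger$ can be absorbed or dropped, and this is exactly where the proof is not ``routine'' — it depends on identities like $A^\dagger A\cdot A^d=A^d$ being \emph{false} in general (so one cannot over-simplify) versus identities like $A^dA\cdot A^dA=A^dA$ being true. I would handle this by consistently rewriting everything in terms of the Hartwig--Spindelböck form \eqref{h} together with the formulas \eqref{md}, \eqref{cm}, \eqref{mdhkp} — exactly as the paper does elsewhere — so that each claimed equivalence becomes a block-matrix identity over $\Sigma,Q,P,\hat Q$, and then reading off the condition. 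Once in that form, $(i)$ and $(ii)$ become statements about which blocks of $\Sigma\hat Q$ vanish after multiplying by $PP^*$ versus $QQ^*$, which translate back to $\mathcal N(A^*)\subseteq\mathcal N(A^k)$ and $\mathcal R(A^k)\subseteq\mathcal R(A^*)$ respectively; $(iii)$ is visibly the ``core part is idempotent-like'' condition $A^k=A^{k+1}$; and $(iv)$, $(v)$ both reduce to $A^\dagger A^k=A^k$ via $\mathcal R(A_c)=\mathcal R(A^k)$. The secondary obstacle is bookkeeping: making sure the inclusion I derive ($\mathcal R(I-A^\dagger A)\subseteq\mathcal N(A_c)$, say) is correctly converted, using $\mathcal N(A_c)=\mathcal N(A^k)$, into the crisp power identity claimed in the statement, and similarly on the range side.
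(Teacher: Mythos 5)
Your overall strategy is the one the paper uses: write $A^{d,\dagger}=A^dAA^\dagger$, $A^{\dagger,d}=A^\dagger AA^d$, $A_c=AA^dA$, collapse each product, and then convert the resulting projector-annihilation identity into the stated inclusion via $\mathcal N(A_c)=\mathcal N(A^k)$ and $\mathcal R(A_c)=\mathcal R(A^k)$. The paper does exactly this by direct algebra, with no recourse to the Hartwig--Spindelb\"ock block form; the only identities needed are $AA^\dagger A=A$ and $A^dAA^dA=A^dA$, which resolve all of your ``can I absorb $A^\dagger A$?'' worries: one gets
$A^{d,\dagger}A_c=A^dAA^\dagger\cdot AA^dA=A^dA$, $\;A_cA^{d,\dagger}=AA^dA\cdot A^dAA^\dagger=AA^dAA^\dagger$, $\;A^{\dagger,d}A_c=A^\dagger A_c$, $\;A_cA^{\dagger,d}=AA^dA\cdot A^\dagger AA^d=AA^d$, and $A^{c,\dagger}A_c=A^\dagger A_cA^\dagger A_c=A^\dagger A_c$.

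The genuine problem is that your intermediate reductions for (i) and (ii) are on the wrong side, and as written they would not yield the stated conditions. For (i) you propose collapsing to ``$A^\dagger A_c=A_c$ or its adjoint''; but $A^\dagger A_c=A_c$ is precisely the condition governing items (iv) and (v) (it gives $\mathcal R(A^k)\subseteq\mathcal N(I-A^\dagger)$, hence $A^\dagger A^k=A^k$), not a null-space inclusion for $A^*$. The correct collapse is $A^dA=A^dA\cdot AA^\dagger$, i.e.\ $A^dA(I-AA^\dagger)=0$, whence $\mathcal N(A^*)=\mathcal R(I-AA^\dagger)\subseteq\mathcal N(A^dA)=\mathcal N(A^k)$. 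For (ii) you propose the reduction ``$A_cA^\dagger A=A_c$''; this identity is \emph{always} true ($A_cA^\dagger A=AA^d(AA^\dagger A)=A_c$), so it cannot be equivalent to anything nontrivial. The correct collapse is $(I-A^\dagger A)A^dA=0$, i.e.\ $\mathcal R(A^k)\subseteq\mathcal R(A^\dagger A)=\mathcal R(A^*)$. You reach the right final statements anyway, but only by asserting them; the chain of equivalences you actually write down does not produce them. Fixing this requires nothing beyond carrying out the five collapses above carefully, after which (iii)--(v) go through exactly as you sketch.
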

\begin{proof}
$ (i) $ By  \cite[Remark 3.1]{Ferreyra}, we have 
\begin{align*}
A^{d,\dagger}A_c=A_{c}A^{d,\dagger}
&\Leftrightarrow 
A^{d}AA^{\dagger}AA^{d}A
=AA^{d}AA^{d}AA^{\dagger}\\
&\Leftrightarrow 
A^{d}A=AA^{d}AA^{\dagger}\\
&\Leftrightarrow A^{d}A(I-AA^{\dagger})=0\\
&\Leftrightarrow \mathcal{N}(A^{*})=\mathcal{N}(A^{\dagger})=\mathcal{N}(AA^{\dagger})=\mathcal{R}(I-AA^{\dagger}) \subseteq \mathcal{N}(A^{d}A)
\\&=\mathcal{N}(A^{d})=\mathcal{N}(A^k).
\end{align*}
Proofs of items $ (ii) $ and $ (iii) $ resemble to that of item $(i)$.

$(iv)$ 
\begin{align*}
A_c=A^{\dagger ,d}A_c&\Leftrightarrow 
A_c
=A^{\dagger}AA^{d}AA^{d}A\\
&\Leftrightarrow 
A_c
=A^{\dagger}A_c
\\&\Leftrightarrow (I-A^{\dagger})A_c=0
\\&\Leftrightarrow \mathcal{R}(A_c)\subseteq \mathcal{N}(I-A^{\dagger}).
\end{align*}
It is clear that $ \mathcal{R}(A_c)=\mathcal{R}(A^k) $.  We have  
$ \mathcal{R}(A^k)\subseteq \mathcal{N}(I-A^{\dagger}) \Leftrightarrow 
A^{\dagger}A^k = A^k$.

Part $ (v) $ is similar to the proof of $(iv)$.
\end{proof}
\begin{remark}
In order to compute explicitly DMP and MPD inverses are useful the following expressions:
\[
A^{d,\dagger} = A^k (A^{2k+1})^{\dagger }A^{k+1}A^{\dagger} \qquad \text{ and } \qquad 
A^{\dagger ,d} = A^{\dagger} A^{k+1} (A^{2k+1})^{\dagger} A^{k}, 
\] 
where $k=$Ind($A$). These formulas follow from the well-known Greville formula \\
$A^d=A^k(A^{2k+1})^{\dagger} A^k$, and they are interesting for computing both inverses by means of only the Moore-Penrose of some powers by using a package like MATLAB.

By using \cite[Corollary 3.8]{Ferreyra}, it is interesting compare the above one with the formula for the core-EP inverse of $A$ given by
\[
A^{\tddd} =A^{d}A^{k}(A^{k})^{\dagger}= A^k (A^{2k+1})^\dagger A^{2k}(A^k)^\dagger.
\]
In addition, by substracting both expressions, it is easy to see that $A^k(A^k)^\dagger=AA^\dagger$ implies $A^{d,\dagger} =A^{\tddd}$. 
\end{remark}
\begin{theorem}\label{q1} 
Let $ A \in M_{n}(\mathbb{C}). $ The general solution of equation 
$$XA= A^{\dagger}A_c$$ is given by 
$ X=A^{\dagger ,d}+F(I-AA^{\dagger})$, for arbitrary $F \in M_{n}(\mathbb{C})$.
\end{theorem}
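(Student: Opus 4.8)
The plan is to verify that $X = A^{\dagger,d}$ is a particular solution of $XA = A^{\dagger}A_c$, and then to show that the general solution of this linear matrix equation is obtained by adding the general solution of the corresponding homogeneous equation $XA = 0$. For the particular solution, I would compute directly: since $A^{\dagger,d} = A^{\dagger}AA^{d}$ (this is the known expression for the MPD inverse, the dual of the DMP formula $A^{d,\dagger} = A^{d}AA^{\dagger}$), we have $A^{\dagger,d}A = A^{\dagger}AA^{d}A = A^{\dagger}(AA^{d}A) = A^{\dagger}A_c$, using $A_c = AA^{d}A$. So $X = A^{\dagger,d}$ indeed satisfies the equation.

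Next I would describe the homogeneous part. The matrix equation $YA = 0$ holds if and only if $\mathcal{R}(A^{*}) \subseteq \mathcal{N}(Y)$, equivalently $\mathcal{R}(Y^{*}) \subseteq \mathcal{N}(A^{*})$; using the orthogonal projector $P_{A} = AA^{\dagger}$ onto $\mathcal{R}(A)$, this is equivalent to $Y = Y(I - P_{A}) = Y(I - AA^{\dagger})$, since $I - AA^{\dagger}$ is the orthogonal projector onto $\mathcal{N}(A^{*})$ and $\mathcal{N}(A) = \mathcal{N}(A^{\dagger}A)$ gives the needed range condition. Hence every solution of $YA = 0$ is of the form $Y = F(I - AA^{\dagger})$ for some $F \in M_{n}(\mathbb{C})$, and conversely any such matrix annihilates $A$ on the right because $(I - AA^{\dagger})A = A - A = 0$. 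Therefore, if $X$ is any solution of $XA = A^{\dagger}A_c$, then $X - A^{\dagger,d}$ solves the homogeneous equation, so $X = A^{\dagger,d} + F(I - AA^{\dagger})$; and conversely every matrix of this form satisfies $XA = A^{\dagger,d}A + F(I - AA^{\dagger})A = A^{\dagger}A_c + 0 = A^{\dagger}A_c$.

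I expect essentially no serious obstacle here: the argument is the standard "particular plus homogeneous" description of the solution set of a right-multiplication matrix equation, and the only two facts needed are the identity $A^{\dagger,d} = A^{\dagger}AA^{d}$ together with $A_c = AA^{d}A$, and the characterization $YA = 0 \iff Y = F(I - AA^{\dagger})$. The mildly delicate point, if any, is making the parametrization statement precise — i.e., that $F$ ranges over all of $M_{n}(\mathbb{C})$ and different $F$ may give the same $X$, so this is a description of the solution set rather than a bijective parametrization — but for the purposes of the theorem statement it suffices to exhibit the set equality, which the two inclusions above establish.
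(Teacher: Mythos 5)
Your argument is correct and is essentially the paper's: the paper simply invokes the general-solution formula $X=BA^{\dagger}+Z-ZAA^{\dagger}$ for $XA=B$ from Ben-Israel and Greville and rewrites it into the stated form, whereas you rederive the same ``particular solution plus homogeneous solution'' description directly, verifying $A^{\dagger,d}A=A^{\dagger}AA^{d}A=A^{\dagger}A_c$ and that $YA=0$ iff $Y=F(I-AA^{\dagger})$. Only a cosmetic slip: $YA=0$ is equivalent to $\mathcal{R}(A)\subseteq\mathcal{N}(Y)$, not $\mathcal{R}(A^{*})\subseteq\mathcal{N}(Y)$, but the chain you actually use ($YA=0\Rightarrow YAA^{\dagger}=0\Rightarrow Y=Y(I-AA^{\dagger})$, and conversely $(I-AA^{\dagger})A=0$) is sound.
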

\begin{proof}
By \cite[p. 52]{Ben}, we arrive at the general solution of $ XA=A^{\dagger}A_{c} $, which is given by
\begin{align*}
X&=A^{\dagger}A_{c}A^{\dagger}+Z-ZAA^{\dagger}\\
&=A^{\dagger}AA^{d}AA^{\dagger}+Z-ZAA^{\dagger}\\
&=A^{\dagger ,d}-A^{\dagger ,d}+Z-(Z-A^{\dagger ,d})AA^{\dagger}\\
&=A^{\dagger ,d}+(Z-A^{\dagger ,d})-(Z-A^{\dagger ,d})AA^{\dagger}\\
&=A^{\dagger ,d}+F(I-AA^{\dagger}),
\end{align*}
where 
$ F=Z-A^{\dagger ,d}$. 
\end{proof}
In a similar way, we prove the following result.
\begin{theorem}\label{q2}
Let $ A \in M_{n}(\mathbb{C}). $ The general solution of equation 
$$ 
A_cA^{\dagger}=AX
$$
is given by $ X=A^{d ,\dagger}+(I-A^{\dagger}A)F$, for arbitrary $F \in M_{n}(\mathbb{C})$. 
\end{theorem}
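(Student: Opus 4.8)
The plan is to mirror the argument of Theorem \ref{q1}, but now solving a matrix equation of the form $AX = B$ rather than $XA = B$. First I would recall the standard consistency-and-general-solution result for $AX = B$: by \cite[p.~52]{Ben}, the equation $AX = B$ is consistent if and only if $AA^{\dagger}B = B$, and in that case the general solution is $X = A^{\dagger}B + (I - A^{\dagger}A)Z$ for arbitrary $Z \in M_{n}(\mathbb{C})$. Here $B = A_{c}A^{\dagger}$, so the first step is to verify consistency, i.e. that $AA^{\dagger}A_{c}A^{\dagger} = A_{c}A^{\dagger}$; this follows from $A_{c} = A A^{d} A$, since $AA^{\dagger}A_{c} = AA^{\dagger}AA^{d}A = AA^{d}A = A_{c}$ (using $AA^{\dagger}A = A$).

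Next I would compute the particular solution $A^{\dagger}B = A^{\dagger}A_{c}A^{\dagger} = A^{\dagger}AA^{d}AA^{\dagger}$ and identify it with the MPD inverse. Recall $A^{\dagger,d} = A^{\dagger}AA^{d}$ (the dual of $A^{d,\dagger} = A^{d}AA^{\dagger}$, see \cite[Remark 2.9]{Malik}); but the particular solution that drops out of the formula is $A^{\dagger}AA^{d}AA^{\dagger}$, which is $A^{\dagger,d}AA^{\dagger}$, not obviously $A^{d,\dagger}$. So I would instead rewrite, as in the proof of Theorem \ref{q1}: write the general solution as $X = A^{\dagger}A_{c}A^{\dagger} + (I - A^{\dagger}A)Z$, then add and subtract $A^{d,\dagger}$:
\begin{align*}
X &= A^{d,\dagger} + \bigl(A^{\dagger}A_{c}A^{\dagger} - A^{d,\dagger}\bigr) + (I - A^{\dagger}A)Z \\
&= A^{d,\dagger} + (I - A^{\dagger}A)Z - (I-A^{\dagger}A)\bigl(A^{d,\dagger} - A^{\dagger}A_{c}A^{\dagger}\bigr)\cdot(\text{check}),
\end{align*}
and the point to verify is that the ``correction'' term $A^{\dagger}A_{c}A^{\dagger} - A^{d,\dagger}$ itself lies in the range of $I - A^{\dagger}A$, i.e. is annihilated on the left by $A^{\dagger}A$. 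Equivalently, I would check $A^{\dagger}A A^{d,\dagger} = A^{\dagger}A_{c}A^{\dagger}$; indeed $A^{\dagger}A A^{d,\dagger} = A^{\dagger}A A^{d}AA^{\dagger} = A^{\dagger}(AA^{d}A)A^{\dagger} = A^{\dagger}A_{c}A^{\dagger}$. Hence $A^{d,\dagger} - A^{\dagger}A_{c}A^{\dagger}$ is left-annihilated by $A^{\dagger}A$, so it can be absorbed: $A^{\dagger}A_{c}A^{\dagger} - A^{d,\dagger} = -(I - A^{\dagger}A)(A^{d,\dagger} - A^{\dagger}A_{c}A^{\dagger})$.

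Putting these together, every solution has the form $X = A^{d,\dagger} + (I-A^{\dagger}A)F$ with $F := Z - (A^{d,\dagger} - A^{\dagger}A_{c}A^{\dagger})$ arbitrary, and conversely every such $X$ is a solution because $A^{d,\dagger}$ is one (check: $AA^{d,\dagger} = AA^{d}AA^{\dagger} = A_{c}A^{\dagger}$) and $A(I-A^{\dagger}A)F = 0$. The main obstacle is purely bookkeeping: making sure the particular solution produced by the generic formula is correctly re-expressed in terms of $A^{d,\dagger}$ via the identity $A^{\dagger}A A^{d,\dagger} = A^{\dagger}A_{c}A^{\dagger}$, exactly parallel to how Theorem \ref{q1} used $A^{\dagger,d}$; there is no deep difficulty, only the left-versus-right duality to keep straight.
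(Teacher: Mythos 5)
Your proposal is correct and follows essentially the same route as the paper: the paper proves Theorem \ref{q1} by invoking the general-solution formula from \cite[p.~52]{Ben} and regrouping the particular solution $A^{\dagger}A_{c}A^{\dagger}$ around the DMP/MPD inverse, and then states that Theorem \ref{q2} is proved ``in a similar way,'' which is exactly the left/right-dual computation you carry out (including the key identity $A^{\dagger}AA^{d,\dagger}=A^{\dagger}A_{c}A^{\dagger}$ and the consistency check). Nothing further is needed.
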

\begin{lemma}
Let $A \in M_{n}(\mathbb{C})$. Then $ X=A^{d}$ is a solution of the  following equation:
\begin{equation*}
XA^{\dagger ,d}=A^{d,\dagger}X.
\end{equation*}
\end{lemma}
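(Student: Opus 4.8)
The plan is to verify directly that $X=A^d$ satisfies the equation $XA^{\dagger,d}=A^{d,\dagger}X$, i.e.\ that
\[
A^{d}A^{\dagger,d}=A^{d,\dagger}A^{d}.
\]
First I would substitute the known closed forms $A^{\dagger,d}=A^{\dagger}AA^{d}$ (equivalently the expression with the Moore--Penrose projectors) and $A^{d,\dagger}=A^{d}AA^{\dagger}$ into both sides. The left-hand side becomes $A^{d}A^{\dagger}AA^{d}$ and the right-hand side becomes $A^{d}AA^{\dagger}A^{d}$. So the claim reduces to the identity $A^{d}A^{\dagger}AA^{d}=A^{d}AA^{\dagger}A^{d}$, which is a statement about how $A^{d}$ interacts with the two orthogonal projectors $P_A=AA^{\dagger}$ and $Q_A=A^{\dagger}A$.

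The key observation I would use is that $A^{d}$ has all its range and null-space behaviour governed by powers of $A$: $\mathcal{R}(A^{d})=\mathcal{R}(A^{k})$ and $\mathcal{N}(A^{d})=\mathcal{N}(A^{k})$ where $k=\mathrm{Ind}(A)$, together with the fact that $A^{d}$ is a polynomial in $A$ and hence commutes with $A$. A clean way to run the computation is to note $A^{d}=A^{d}A A^{d}$ so that on the left, $A^{d}A^{\dagger}AA^{d}=A^{d}A^{\dagger}A\cdot A A^{d}A^{d}=A^{d}Q_A\cdot A(A^{d})^{2}$, and since $A^{d}Q_A=A^{d}A^{\dagger}A$; on the other hand $A^{d}=A A^{d}A^{d}$ gives a parallel manipulation. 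More efficiently, I would use $A^{d}A^{\dagger}=A^{d}AA^{d}A^{\dagger}=A^{d}(A^{d}A)A^{\dagger}$ and, recalling $A^{d}A=AA^{d}$, rewrite $A^{d}A^{\dagger}AA^{d}$ and $A^{d}AA^{\dagger}A^{d}$ each as $A^{d}(AA^{\dagger}A)A^{d}$-type expressions after inserting a redundant $AA^{d}$; concretely $A^{d}A^{\dagger}AA^{d}=A^{d}A^{\dagger}(AA^{d})A=(A^{d}A^{\dagger}AA^{d})$ while inserting $A^{d}A=AA^{d}$ in the middle of the right side yields the same string. I expect this bookkeeping to collapse to a short chain of equalities once one consistently applies $A^{d}A=AA^{d}$, $A^{d}AA^{d}=A^{d}$, and $A^{\dagger}AA^{\dagger}=A^{\dagger}$.

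The main obstacle, such as it is, is organizing the insertions of $A^{d}A=AA^{d}$ and $A^{d}AA^{d}=A^{d}$ so that the $A^{\dagger}$ gets "sandwiched" symmetrically; there is no deep content, and in particular \emph{no core-EP hypothesis is needed}, which is why the statement is phrased merely as "$X=A^d$ is a solution" rather than claiming uniqueness. An alternative, even more transparent route is to pass to the Hartwig--Spindelb\"ock form (\ref{h}) and use the explicit block matrices for $A^{d}$ in (\ref{md}) and for $A^{d,\dagger}$, $A^{\dagger,d}$ in (\ref{mdhkp}); both $A^{d}A^{\dagger,d}$ and $A^{d,\dagger}A^{d}$ then come out as the same upper-left block built from $(\Sigma Q)^{d}$ and $\Sigma P$ with zeros elsewhere, after using $QQ^{*}+PP^{*}=I_r$ and $(\Sigma Q)^{d}(\Sigma Q)(\Sigma Q)^{d}=(\Sigma Q)^{d}$. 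I would present the first (projector-algebra) proof as the primary argument since it is coordinate-free, and only fall back on the block computation if any step resists a clean one-line justification.
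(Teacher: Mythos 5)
Your proposal is correct and follows essentially the same route as the paper: substitute $A^{d,\dagger}=A^{d}AA^{\dagger}$ and $A^{\dagger,d}=A^{\dagger}AA^{d}$, then use $A^{d}=(A^{d})^{2}A=A(A^{d})^{2}$ together with $AA^{\dagger}A=A$ and $A^{d}AA^{d}=A^{d}$ to collapse both sides to $(A^{d})^{2}$, which is exactly the paper's computation (and the remark it draws afterward). The only caveat is that one of your intermediate rewritings ($A^{d}A^{\dagger}AA^{d}=A^{d}A^{\dagger}(AA^{d})A$) is not a valid identity, but the surrounding plan makes clear the correct insertions, so this is a slip in bookkeeping rather than a gap in the argument.
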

\begin{proof}
Let $ X=A^{d}$. Then
\begin{align*}
A^{d}A^{\dagger ,d}&=A^{d}A^{\dagger}AA^{d}=(A^{d})^{2}AA^{\dagger}AA^{d}
\\&=(A^{d})^{2}AA^{d}=A^{d}A(A^{d})^{2}=A^{d}AA^{\dagger}A(A^{d})^{2}=A^{d,\dagger}A^{d}.
\end{align*}
\end{proof}
Note that, the relations in the last proof show that $$A^{d}A^{\dagger ,d}=A^{d,\dagger}A^{d}=(A^{d})^{2}.$$
\section{DMP and MPD binary relationships}
In this section, new binary relations based on the \emph{DMP} and \emph{MPD-}inverses are considered. 
The relationship between these binary relations and other binary relation orders is investigated.

Assume that  $ A, B \in M_{n}(\mathbb{C})$.
By \cite{MitraSK} and \cite[Definition 4.1]{new}, we state the following: 
\begin{align*}
A \qlestardod B ~~~~ & \textrm{if and only if} ~~~~ A^{d,\dagger}A=A^{d,\dagger}B\quad \&
\quad AA^{d,\dagger}=BA^{d,\dagger},\\
A \qlestardodd B ~~~~& \textrm{if and only if} ~~~~ A^{\dagger ,d}A=A^{\dagger ,d}B\quad \&
\quad AA^{\dagger ,d}=BA^{\dagger ,d},\\
A \qlestardot B ~~~~ & \textrm{if and only if} ~~~~ A^{c,\dagger}A=A^{c,\dagger}B\quad \& 
\quad AA^{c,\dagger}=BA^{c,\dagger}, \\
A \qlestardooD B ~~~~ & \textrm{if and only if} ~~~~ A^{d}A=A^{d}B\quad \& 
\quad AA^{d}=BA^{d}.
\end{align*}

Next results shows that the core part of a matrix $A$ is always an upper bound of $A$ under the considered binary relations. 
\begin{theorem}\label{EW2}
Let $ A \in M_{n}(\mathbb{C}) $. Then 
\begin{enumerate}[label=(\roman*)]
\item
$ A \qlestardot A_{c}$,
\item
$ A \qlestardooD A_{c}$, 
\item
$ A \qlestardodd A_{c}$,
\item
$ A \qlestardod A_{c}$.
\end{enumerate}
\end{theorem}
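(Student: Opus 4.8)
The plan is to reduce all four statements to a small stock of elementary identities and then dispatch the cases by essentially the same manipulation. First I would record what I need about the core part. From $A_{c}=AA^{d}A=A^{2}A^{d}=A^{d}A^{2}$, together with $A^{d}AA^{d}=A^{d}$ and $A^{d}A=AA^{d}$, one gets
\[
A^{d}A_{c}=A^{d}AA^{d}A=A^{d}A,\qquad A_{c}A^{d}=AA^{d}AA^{d}=AA^{d},\qquad A^{d}A_{c}=A_{c}A^{d}=A^{d}A=AA^{d}.
\]
Next, using only $AA^{\dagger}A=A$, I would note $AA^{\dagger}A_{c}=(AA^{\dagger}A)A^{d}A=A_{c}$, $A_{c}A^{\dagger}A=AA^{d}(AA^{\dagger}A)=A_{c}$, and consequently $A_{c}A^{\dagger}A_{c}=AA^{d}(AA^{\dagger}A)A^{d}A=A_{c}$. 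Finally I would use the explicit representations $A^{d,\dagger}=A^{d}AA^{\dagger}$, $A^{\dagger,d}=A^{\dagger}AA^{d}$ and $A^{c,\dagger}=A^{\dagger}A_{c}A^{\dagger}$ (equivalently, the one-sided relations $A^{d,\dagger}A=A^{d}A$, $AA^{\dagger,d}=AA^{d}$, $A^{c,\dagger}A=A^{\dagger}A_{c}$, $AA^{c,\dagger}=A_{c}A^{\dagger}$ coming from their defining systems).

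With these in hand, $(ii)$ is immediate: $A^{d}A=A^{d}A_{c}$ and $AA^{d}=A_{c}A^{d}$ are two of the identities above. For $(iv)$ I would compute $A^{d,\dagger}A=A^{d}(AA^{\dagger}A)=A^{d}A$ and $A^{d,\dagger}A_{c}=A^{d}(AA^{\dagger}A_{c})=A^{d}A_{c}=A^{d}A$, which gives the left-hand equality; for the right-hand one, $AA^{d,\dagger}=(AA^{d}A)A^{\dagger}=A_{c}A^{\dagger}$ while $A_{c}A^{d,\dagger}=(A_{c}A^{d})AA^{\dagger}=AA^{d}AA^{\dagger}=A_{c}A^{\dagger}$. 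Case $(iii)$ is the mirror image: $AA^{\dagger,d}=(AA^{\dagger}A)A^{d}=AA^{d}$ and $A_{c}A^{\dagger,d}=(A_{c}A^{\dagger}A)A^{d}=A_{c}A^{d}=AA^{d}$; and $A^{\dagger,d}A=A^{\dagger}(AA^{d}A)=A^{\dagger}A_{c}$ while $A^{\dagger,d}A_{c}=A^{\dagger}A(A^{d}A_{c})=A^{\dagger}A(A^{d}A)=A^{\dagger}(AA^{d}A)=A^{\dagger}A_{c}$.

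Case $(i)$ is handled by substituting $A^{c,\dagger}=A^{\dagger}A_{c}A^{\dagger}$ and collapsing the triple products with the identities of the first paragraph: $A^{c,\dagger}A_{c}=A^{\dagger}(A_{c}A^{\dagger}A_{c})=A^{\dagger}A_{c}=A^{c,\dagger}A$ and $A_{c}A^{c,\dagger}=(A_{c}A^{\dagger}A_{c})A^{\dagger}=A_{c}A^{\dagger}=AA^{c,\dagger}$. I do not expect any serious obstacle; the only thing to watch is the bookkeeping of which side $A^{\dagger}$ (or $A^{d}$) sits on and where to insert $A_{c}=AA^{d}A$ so that the idempotent $AA^{\dagger}$ or $AA^{d}$ can be absorbed. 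Case $(i)$ is the most delicate of the four because $A^{c,\dagger}$ carries two copies of $A^{\dagger}$, and there one leans on the ``reflexivity'' identity $A_{c}A^{\dagger}A_{c}=A_{c}$ rather than on a one-sided relation; no deeper structural input, such as the Hartwig--Spindelb{\"o}ck form, is required.
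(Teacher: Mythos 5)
Your proposal is correct and follows essentially the same route as the paper: both verify the defining equalities of each binary relation by direct computation from the product representations $A^{c,\dagger}=A^{\dagger}A_{c}A^{\dagger}$, $A^{d,\dagger}=A^{d}AA^{\dagger}$, $A^{\dagger,d}=A^{\dagger}AA^{d}$ together with $A_{c}=AA^{d}A$, $AA^{\dagger}A=A$, and $A^{d}AA^{d}=A^{d}$. The only difference is one of completeness: the paper writes out case (i) and declares the others ``similar,'' whereas you supply the absorption identities $AA^{\dagger}A_{c}=A_{c}A^{\dagger}A=A_{c}A^{\dagger}A_{c}=A_{c}$ explicitly and dispatch all four cases.
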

\begin{proof}
$ (i) $  By  \cite[Theorem 2.1]{am16}, we have
\begin{align*}
A^{c,\dagger}A&=A^{\dagger}AA^{d}AA^{\dagger}A=A^{\dagger}AA^{d}AA^{d}A
=A^{\dagger}AA^{d}AA^{\dagger}AA^{d}A=A^{c,\dagger}A_{c}\\
AA^{c,\dagger}&=AA^{\dagger}AA^{d}AA^{\dagger}=AA^{d}AA^{d}AA^{\dagger}=
AA^{d}AA^{\dagger}AA^{d}AA^{\dagger}=A_{c}A^{c,\dagger}.
\end{align*}
Proofs of items $ (ii) $, $ (iii) $ and $ (iv) $ are similar to that of item (i).
\end{proof}
\begin{theorem}\label{ADF}
Let $ A, B \in M_{n}(\mathbb{C}) $ with Ind$(A)= k$.
Then the following are equivalent:
\begin{enumerate}[label=(\roman*)]
\item
$ A \qlestardod B$,
\item
$ A^{d}=A^{d}A^{\dagger}B=B(A^{d})^{2}$, 
\item
$ A^{k}=A^{k}A^{\dagger}B=BA^{d}A^{k}$.
\end{enumerate}
\end{theorem}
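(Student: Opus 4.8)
The plan is to prove the chain of equivalences by the usual cyclic strategy $(i)\Rightarrow(ii)\Rightarrow(iii)\Rightarrow(i)$, using throughout the explicit form of the DMP-inverse $A^{d,\dagger}=A^{d}AA^{\dagger}$ together with the standard identities $A^{d}AA^{d}=A^{d}$, $A^{d}A=AA^{d}$, and $A^{d}A^{k}=A^{k-1}$ (so that $\mathcal R(A^{d})=\mathcal R(A^{k})$ and $A^{d}(A^{d})^{k}A^{k}=A^{d}$). First I would unwind the definition: $A\qlestardod B$ means $A^{d,\dagger}A=A^{d,\dagger}B$ and $AA^{d,\dagger}=BA^{d,\dagger}$, i.e. $A^{d}AA^{\dagger}A=A^{d}AA^{\dagger}B$ and $AA^{d}AA^{\dagger}=BA^{d}AA^{\dagger}$. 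For the first of these, note $A^{d}AA^{\dagger}A=A^{d}A\cdot A^{\dagger}A$; since $A^{d}A=A^{d}A^{d}A^{2}$ and one can absorb the projector $A^{\dagger}A$ using $A^{d}=A^{d}A A^{\dagger} A A^{d}\cdots$, the left-hand side simplifies to $A^{d}$ — more cleanly, $A^{d}AA^{\dagger}A = A^{d}(AA^{\dagger}A)A^{d}A = A^{d}\cdot A\cdot A^{d}A$? I would instead argue directly: $A^{d}AA^{\dagger}A=A^{d}$ is false in general, so the right normalization is to keep $A^{d,\dagger}A=A^{d}AA^{\dagger}A$, multiply the hypothesis on the left by $(A^{d})^{k}A^{k-1}$-type factors to strip $A^{d,\dagger}$ down to $A^{d}$, and land on $A^{d}=A^{d}A^{\dagger}B$. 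Symmetrically, from $AA^{d,\dagger}=BA^{d,\dagger}$, multiplying on the right by $(A^{d})^{2}$ and using $A^{\dagger}A^{d}=A^{\dagger}A A^{d}A^{d}\cdots$ collapses $AA^{d,\dagger}(A^{d})^{2}$ to $A^{d}$ and $BA^{d,\dagger}(A^{d})^{2}$ to $B(A^{d})^{2}$ (here $A^{d}AA^{\dagger}(A^{d})^{2}=A^{d}A A^{d}\cdot A^{\dagger}A\cdot (A^{d})^{2}$, and $A^{\dagger}A(A^{d})^{2}=(A^{d})^{2}$ because $\mathcal R((A^{d})^{2})\subseteq\mathcal R(A^{\dagger}A)=\mathcal R(A^{*})$? not quite — but $A(A^{d})^{2}=A^{d}$ and $A^{\dagger}A A^{d}=A^{\dagger}A^{d}A A^{d}$, so the needed reduction is $A^{d}AA^{\dagger}(A^{d})^{2}=(A^{d})^{2}$ after all). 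This gives $(i)\Rightarrow(ii)$. For the converse $(ii)\Rightarrow(i)$, starting from $A^{d}=A^{d}A^{\dagger}B=B(A^{d})^{2}$, I would left- and right-multiply by $A$ and by $A^{\dagger}$ as appropriate to rebuild $A^{d,\dagger}A=A^{d,\dagger}B$ and $AA^{d,\dagger}=BA^{d,\dagger}$, using $AA^{d}=A^{d}A$ and $A\cdot A^{d}=A\cdot B(A^{d})^{2}$.

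For $(ii)\Leftrightarrow(iii)$ the key tool is the interchangeability of $A^{d}$ and $A^{k}$ on the relevant ranges: $A^{d}=A^{k}(A^{d})^{k+1}=(A^{d})^{k+1}A^{k}$, and conversely $A^{k}=A^{d}A^{k+1}=A^{k+1}A^{d}$. So from $A^{d}=A^{d}A^{\dagger}B$ I would multiply on the left by $A^{k+1}$ to get $A^{k+1}A^{d}=A^{k}=A^{k}A^{\dagger}B$ (since $A^{k+1}A^{d}A^{\dagger}=A^{k}A^{\dagger}$), and reversing, from $A^{k}=A^{k}A^{\dagger}B$ multiply on the left by $(A^{d})^{k+1}$ to recover $A^{d}=A^{d}A^{\dagger}B$. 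Likewise $A^{d}=B(A^{d})^{2}$ gives, after right-multiplication by $A^{k+1}$, that $A^{k}=BA^{d}A^{k}$ (using $(A^{d})^{2}A^{k+1}=A^{d}A^{k}$? one has $(A^{d})^{2}A^{k+1}=A^{d}A^{k}$ indeed), and the reverse direction multiplies by $(A^{d})^{k+1}$ on the right and uses $A^{d}A^{k}(A^{d})^{k+1}=(A^{d})^{2}$.

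The step I expect to be the main obstacle is the very first reduction in $(i)\Rightarrow(ii)$: namely verifying precisely which one-sided multiplications turn $A^{d,\dagger}A=A^{d}AA^{\dagger}A$ and $AA^{d,\dagger}=AA^{d}AA^{\dagger}$ into the clean statements $A^{d}=A^{d}A^{\dagger}B$ and $A^{d}=B(A^{d})^{2}$, because the projector $A^{\dagger}A$ (respectively $AA^{\dagger}$) sits in the middle and does not simply cancel — one must exploit that it acts as the identity on $\mathcal R(A^{*})\supseteq\mathcal R((A^{d})^{*})$ (resp. on $\mathcal R(A)\supseteq\mathcal R(A^{d})$), so that $A^{d}AA^{\dagger}=A^{d}$ only after composing with a further $A^{d}$ or $A$ factor on the correct side. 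Once the correct sequence of multipliers is pinned down, everything else is bookkeeping with the three defining Drazin identities and the two Moore–Penrose projector identities, and I would present it as three short aligned computations, one per implication.
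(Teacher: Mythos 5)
Your overall strategy coincides with the paper's: unwind $A^{d,\dagger}=A^{d}AA^{\dagger}$, remove the Moore--Penrose factor via $AA^{\dagger}A=A$, and pass between $A^{d}$ and $A^{k}$ by multiplying with suitable powers of $A$ and $A^{d}$. Your treatment of $(ii)\Leftrightarrow(iii)$ is essentially the paper's argument and the identities you use there ($A^{k+1}A^{d}=A^{k}$, $(A^{d})^{k+1}A^{k}=A^{d}$, $(A^{d})^{2}A^{k+1}=A^{d}A^{k}$) are all correct.

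The $(i)\Leftrightarrow(ii)$ step, however, is where your write-up goes wrong, and it is exactly the step you flag as the main obstacle. First, you talk yourself out of the one observation that makes the first equation trivial: $A^{d,\dagger}A=A^{d}AA^{\dagger}A=A^{d}A$ \emph{immediately}, because $AA^{\dagger}A=A$; no range argument and no ``$(A^{d})^{k}A^{k-1}$-type factors'' are needed. After that, $A^{d}A=A^{d}AA^{\dagger}B$ becomes $A^{d}=A^{d}A^{\dagger}B$ upon left-multiplication by $A^{d}$ (using $(A^{d})^{2}A=A^{d}$), and the converse is left-multiplication by $A$; this is precisely the paper's chain of equivalences. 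Second, for the other equation your asserted identities are false: $A^{d}AA^{\dagger}(A^{d})^{2}=(A^{d})^{3}$, not $(A^{d})^{2}$, hence $AA^{d,\dagger}(A^{d})^{2}=(A^{d})^{2}$ (not $A^{d}$) and $BA^{d,\dagger}(A^{d})^{2}=B(A^{d})^{3}$ (not $B(A^{d})^{2}$). Right-multiplying $AA^{d,\dagger}=BA^{d,\dagger}$ by $(A^{d})^{2}$ therefore yields $(A^{d})^{2}=B(A^{d})^{3}$, one power of $A^{d}$ too many; you still need a further right-multiplication by $A$ (or, cleaner, use the multiplier $A(A^{d})^{2}$ from the outset, letting $AA^{\dagger}A=A$ absorb the projector first) to land on $A^{d}=B(A^{d})^{2}$. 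These are repairable slips, but as written the central computations of the main implication do not hold.
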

\begin{proof}
$(i)\Rightarrow (ii)$ If $ A \qlestardod B$, then $A^{d,\dagger}A=A^{d,\dagger}B$ and 
$ AA^{d,\dagger}=BA^{d,\dagger}.$ Thus
\begin{align*}
A^{d,\dagger}A=A^{d,\dagger}B&\Leftrightarrow A^{d}AA^{\dagger}A=A^{d}AA^{\dagger}B\\
&\Leftrightarrow A^{d}A=A^{d}AA^{\dagger}B\\
&\Leftrightarrow A^{d}A^{d}A=A^{d}A^{d}AA^{\dagger}B\\
&\Leftrightarrow A^{d}=A^{d}A^{\dagger}B.
\end{align*}
Similarly, $ AA^{d,\dagger}=BA^{d,\dagger} \Leftrightarrow A^{d}=B(A^{d})^{2}. $

$(ii) \Rightarrow (iii)$ It is trivial.

$(iii) \Rightarrow (i) $ Let $ A^{k}=A^{k}A^{\dagger}B $ and $ A^{k}=BA^{d}A^{k}$. Then 
\begin{align*}
A^{k}=A^{k}A^{\dagger}B&\Leftrightarrow (A^{d})^{k}A^{k}=(A^{d})^{k}A^{k}A^{\dagger}B\\
&\Leftrightarrow AA^{d}=A^{d}AA^{\dagger}B\\
&\Leftrightarrow A^{d}AA^{\dagger}A=A^{d}AA^{\dagger}B\\
&\Leftrightarrow A^{d,\dagger}A=A^{d,\dagger}B.
\end{align*}
Similarly, $ A^{k}=BA^{d}A^{k} \Leftrightarrow AA^{d,\dagger}=BA^{d,\dagger}$.
\end{proof}
The following theorem is derived by using the same techique as in Theorem \ref{ADF}.
\begin{theorem}
Assume that $ A, B \in M_{n}(\mathbb{C}) $ with Ind$(A)= k$. 
Then the following are equivalent:
\begin{enumerate}[label=(\roman*)]
\item
$ A \qlestardodd B$,
\item
$ A^{d}=(A^{d})^{2}B=BA^{\dagger}A^{d}$, 
\item
$ A^{k}=A^{k}A^{d}B=BA^{\dagger}A^{k}$.
\end{enumerate} 
\end{theorem}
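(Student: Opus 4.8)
The plan is to mirror exactly the structure of the proof of Theorem \ref{ADF}, swapping the roles of $A^\dagger$ and $A^d$ in the appropriate places since the MPD-inverse $A^{\dagger,d}=A^\dagger A A^d A$ is obtained from $A^{d,\dagger}=A^d A A^\dagger A$ by this duality. I would establish the cycle $(i)\Rightarrow(ii)\Rightarrow(iii)\Rightarrow(i)$.

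For $(i)\Rightarrow(ii)$: assume $A\qlestardodd B$, i.e. $A^{\dagger,d}A=A^{\dagger,d}B$ and $AA^{\dagger,d}=BA^{\dagger,d}$. Expanding $A^{\dagger,d}=A^\dagger A A^d A$ (or using the $U$-block form \eqref{mdhkp}), the first equation reads $A^\dagger A A^d A A = A^\dagger A A^d A B$; left-multiplying by $(A^d)^2 A$ and using $(A^d)^2 A A^\dagger A=(A^d)^2 A$ together with $A^d A A^d A A = A^d\cdot A^2 = \dots$ collapses the left side to $A^d$ and the right side to $(A^d)^2 B$, giving $A^d=(A^d)^2 B$. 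Symmetrically, $AA^{\dagger,d}=BA^{\dagger,d}$ expands to $A A^\dagger A A^d A = B A^\dagger A A^d A$; right-multiplying by $A(A^d)^2$ and simplifying yields $A^d = B A^\dagger A^d$. The implication $(ii)\Rightarrow(iii)$ is immediate on left-multiplying $A^d=(A^d)^2 B$ by $A^k$ (using $A^k A^d = A^{k-1}$, or rather $A^k (A^d)^2 = A^{k-1}A^d$; more directly one uses $A^{k} (A^d)^2 B$ versus... ) — in fact it is cleanest to write $A^k = A^k A^d A \cdot A^d = \dots$; following the template of Theorem \ref{ADF} where "$(ii)\Rightarrow(iii)$ is trivial," I expect the same here after multiplying through by the right power of $A$.

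For $(iii)\Rightarrow(i)$: assume $A^k=A^k A^d B$ and $A^k = B A^\dagger A^k$. Left-multiply the first by $(A^d)^k$ and use $(A^d)^k A^k = A A^d = A^d A$ to get $A^d A = A^d A^d A^k A^d B$... here I need to be a little careful: the goal is to reach $A^{\dagger,d}A = A^{\dagger,d}B$, i.e. $A^\dagger A A^d A A = A^\dagger A A^d A B$. Starting from $(A^d)^k(A^k) = (A^d)^k(A^k A^d B)$ gives $A A^d = A^d A^d A^k A^d B$; rewriting $A^d A^d A^k A^d = A^d A^d A^{k} (A^d)^{?}$ and collapsing powers should land on $A A^d = A^d A A^\dagger B$-type identity, which after left-multiplying by $A^\dagger A A^d \cdot(\text{suitable})$ becomes $A^{\dagger,d}A = A^{\dagger,d}B$; the second relation $A^k = BA^\dagger A^k$ similarly yields $AA^{\dagger,d}=BA^{\dagger,d}$.

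The main obstacle I anticipate is purely bookkeeping: making sure at each step that the power of $A^d$ (or $A^k$) multiplied on the correct side genuinely collapses the string of $A$'s, $A^\dagger$'s and $A^d$'s using only the defining identities $A^d A A^d = A^d$, $A^{k+1}A^d = A^k$, $A^d A = A A^d$, $A A^\dagger A = A$ — and in particular that one never needs $A^\dagger A^d = A^d A^\dagger$ or any spurious commutation. Since the statement says the theorem "is derived by using the same technique as in Theorem \ref{ADF}," I would in the write-up simply indicate the two or three display computations that differ from that proof (essentially interchanging $A^\dagger$ and $A^d$ at the labelled steps) and note that the remaining steps are identical, rather than reproducing the whole chain in full.
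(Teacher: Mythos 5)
Your overall strategy is exactly the one the paper intends: the paper gives no written proof of this theorem, only the remark that it ``is derived by using the same technique as in Theorem \ref{ADF}'', and dualizing that proof --- multiplying by suitable powers of $A^{d}$ and $A$ on the appropriate sides and using only $A^{d}AA^{d}=A^{d}$, $AA^{\dagger}A=A$, $A^{d}A=AA^{d}$ and $(A^{d})^{k}A^{k}=AA^{d}$ --- is indeed all that is needed; your worry about needing a spurious commutation such as $A^{\dagger}A^{d}=A^{d}A^{\dagger}$ does not materialize.

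There is, however, one concrete error that derails your displayed computations as written: the expansions $A^{\dagger,d}=A^{\dagger}AA^{d}A$ and $A^{d,\dagger}=A^{d}AA^{\dagger}A$ each carry a spurious trailing $A$. The formulas used throughout the paper (see the proofs of Theorem \ref{as20} and Lemma \ref{ASS}) are $A^{\dagger,d}=A^{\dagger}AA^{d}$ and $A^{d,\dagger}=A^{d}AA^{\dagger}$. Consequently the first relation of $A \qlestardodd B$ is $A^{\dagger}AA^{d}A=A^{\dagger}AA^{d}B$, not $A^{\dagger}AA^{d}AA=A^{\dagger}AA^{d}AB$ as you write; the latter is a genuinely different equation ($YA^{2}=YAB$ instead of $YA=YB$ with $Y=A^{\dagger}AA^{d}$), and applying your multiplier $(A^{d})^{2}A$ to it collapses the left side to $(A^{d})^{2}A^{2}=AA^{d}$ rather than to $A^{d}$ and the right side to $A^{d}B$ rather than to $(A^{d})^{2}B$, so you do not land on item (ii). With the correct formula the same multiplier works perfectly:
\begin{equation*}
(A^{d})^{2}A\cdot A^{\dagger}AA^{d}A=(A^{d})^{2}AA^{d}A=(A^{d})^{2}A=A^{d},
\qquad
(A^{d})^{2}A\cdot A^{\dagger}AA^{d}B=(A^{d})^{2}B,
\end{equation*}
and dually $AA^{\dagger}AA^{d}=BA^{\dagger}AA^{d}$ right-multiplied by $A^{d}$ gives $A^{d}=BA^{\dagger}A^{d}$. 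The remaining implications then go through exactly as you outline ($(ii)\Leftrightarrow(iii)$ by multiplying by $A^{k+1}$ resp.\ $(A^{d})^{k+1}$, and $(iii)\Rightarrow(i)$ via $(A^{d})^{k}A^{k}=AA^{d}$ followed by left-multiplication by $A^{\dagger}A$). So the proof is correct once the trailing $A$ is deleted from both inverse formulas.
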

\begin{remark}\label{DU2}
Assume that $ A \in M_{n}(\mathbb{C}) $ with Ind$(A)= k$.
By \cite[Proposition 3.3]{KS} and \cite[Theorem 3.3 and Theorem 3.5]{am16}, we arrive at the conclusion that 
$A$ is a k-EP matrix  (that is, $A^k A^\dagger = A^\dagger A^k$)  if and only if
$ A^{c\dagger}=A^{d,\dagger}=A^{\dagger ,d}=A^{d}$. 
\end{remark}
By Remark \ref{DU2}, and \cite[Proposition 4.7]{new}, 
we have the following remark.
\begin{remark}
Let $ A \in M_{n}(\mathbb{C}) $ with Ind$(A)= k$. If $A$ is a $k$-EP, then the following four binary relations are  equivalent:
$A \qlestardodd B,$
$ A \qlestardod B,$
$A \qlestardot B,$
$ A \qlestardooD B.$
\end{remark}
\begin{example}\label{ex1}
\begin{small}
Let $ A = \left( \begin{array}{ccc}
2 & 0 & 0 \\ 
0 & 0 & 0 \\ 
2 & 2 & 0 
\end{array} \right)$, $ B= \left( \begin{array}{ccc}
2 & 0 & 0 \\ 
0 & 0 & 0 \\ 
1 & 0 & 1 
\end{array} \right)$. 
\end{small}
Then, Ind($A$)$=2$, 
\begin{equation*}
A^{d}=A^{d,\dagger} = \left( \begin{array}{ccc}
\frac{1}{2} & 0 & 0 \\ 
0 & 0 & 0 \\
\frac{1}{2} & 0 & 0 
\end{array} \right), \quad
A^{c, \dagger}=A^{\dagger ,d} = \left( \begin{array}{ccc}
\frac{1}{2} & 0 & 0 \\ 
0 & 0 & 0 \\
0 & 0 & 0 
\end{array} \right).
\end{equation*}
It is readily seen that $ A \qlestardooD B $, $ A \qlestardod B$, but  $ A \nqlestardodd B$ and $ A \nqlestardot B$.
\end{example}

\section*{Acknowledgement}
The authors would like to thank the anonymous referees for their
careful reading and their valuable comments and suggestions that help
us to improve the reading of the paper.

\section*{Funding} 

The third author was partially supported by Universidad Nacional de La
Pampa (Argentina) Facultad de Ingenier\'ia [Grant Resol. Nro. 135/19]
and Ministerio de Ciencia, Innovaci\'on y Universidades (Spain) [Grant
Redes de Investigaci\'on, MICINN-RED2022-134176-T].

\section*{Declarations}
{\bf Conflict of interest}  There is no conflict of interest in the manuscript.
 \bibliographystyle{spbasic} 
\bibliography{reference}

\begin{thebibliography}{36}
\providecommand{\natexlab}[1]{#1}
\providecommand{\url}[1]{{#1}}
\providecommand{\urlprefix}{URL }
\expandafter\ifx\csname urlstyle\endcsname\relax
  \providecommand{\doi}[1]{DOI~\discretionary{}{}{}#1}\else
  \providecommand{\doi}{DOI~\discretionary{}{}{}\begingroup
  \urlstyle{rm}\Url}\fi
\providecommand{\eprint}[2][]{\url{#2}}

\bibitem[{Ansari et~al(2019)Ansari, Bajodah, and Hamayun}]{ansari20}
Ansari U, Bajodah AH, Hamayun MT (2019) Quadrotor control via robust
  generalized dynamic inversion and adaptive non-singular terminal sliding
  mode. Asian Journal of Control 21(3):1237--1249

\bibitem[{Ben-Israel and Greville(2003)}]{Ben}
Ben-Israel A, Greville TN (2003) Generalized Inverses: Theory and Applications,
  vol~15. Springer Science \& Business Media

\bibitem[{Ben{\'\i}tez and Rako{\v{c}}evi{\'c}(2010)}]{benitez2010matrices}
Ben{\'\i}tez J, Rako{\v{c}}evi{\'c} V (2010) Matrices a such that
  ${A}{A}^{\dagger}- {A}^{\dagger}{A}$ are nonsingular. Applied Mathematics and
  computation 217(7):3493--3503

\bibitem[{Ben{\'\i}tez and Rako{\v{c}}evi{\'c}(2012)}]{benitez2012canonical}
Ben{\'\i}tez J, Rako{\v{c}}evi{\'c} V (2012) Canonical angles and limits of
  sequences of {EP} and co-{EP} matrices. Applied Mathematics and Computation
  218(17):8503--8512

\bibitem[{Brockett(1990)}]{brockett19}
Brockett RW (1990) Gramians, generalized inverses, and the least-squares
  approximation of optical flow. journal of Visual Communication and image
  Representation 1(1):3--11

\bibitem[{Campbell and Meyer(1991)}]{Ben1}
Campbell SL, Meyer CD (1991) Generalized Inverses of Linear Transformations.
  Dover, New York, Second Edition

\bibitem[{Cvetkovi{\'c}-Ili{\'c} and Wei(2017)}]{ilic2017algebraic}
Cvetkovi{\'c}-Ili{\'c} DS, Wei Y (2017) Algebraic Properties of Generalized
  Inverses, vol~52. {Springer, Singapore}

\bibitem[{Cvetkovi{\'c}-Ili{\'c} et~al(2015)Cvetkovi{\'c}-Ili{\'c}, Mosi{\'c},
  and Wei}]{CvMoWe}
Cvetkovi{\'c}-Ili{\'c} DS, Mosi{\'c} D, Wei Y (2015) Partial orders on ${B
  (H)}$. Linear Algebra and its Applications 481:115--130

\bibitem[{Ferreyra et~al(2020)Ferreyra, Levis, and Thome}]{Ferreyra}
Ferreyra DE, Levis FE, Thome N (2020) Characterizations of k-commutative
  equalities for some outer generalized inverses. Linear and Multilinear
  Algebra 68(1):177--192

\bibitem[{Hartwig and Spindelb{\"o}ck(1983)}]{Hartwig}
Hartwig RE, Spindelb{\"o}ck K (1983) Matrices for which ${A}^*$ and
  ${A}^{\dagger}$ commute. Linear and Multilinear Algebra 14(3):241--256

\bibitem[{Kheirandish and Salemi(2023{\natexlab{a}})}]{KS}
Kheirandish E, Salemi A (2023{\natexlab{a}}) Generalized bilateral inverses.
  Journal of Computational and Applied Mathematics 428:115{137}

\bibitem[{Kheirandish and
  Salemi(2023{\natexlab{b}})}]{kheirandish2023generalized}
Kheirandish E, Salemi A (2023{\natexlab{b}}) Generalized bilateral inverses of
  tensors via {Einstein} product with applications to singular tensor
  equations. Computational and Applied Mathematics 42(8):343

\bibitem[{Kim(2021)}]{kim20}
Kim M (2021) The generalized extreme learning machines: tuning hyperparameters
  and limiting approach for the {Moore--Penrose} generalized inverse. Neural
  Networks 144:591--602

\bibitem[{Lash et~al(2017)Lash, Lin, Street, Robinson, and Ohlmann}]{lash20}
Lash MT, Lin Q, Street N, Robinson JG, Ohlmann J (2017) Generalized inverse
  classification. In: Proceedings of the 2017 SIAM International Conference on
  Data Mining, SIAM, pp 162--170

\bibitem[{Liu et~al(2012)Liu, Jin, and Cvetkovi{\'c}-Ili{\'c}}]{LiJiCv}
Liu X, Jin H, Cvetkovi{\'c}-Ili{\'c} DS (2012) The absorption laws for the
  generalized inverses. Applied Mathematics and Computation 219(4):2053--2059

\bibitem[{Ma(2022)}]{Ma}
Ma H (2022) Characterizations and representations for the {CMP} inverse and its
  application. Linear and Multilinear Algebra 70(20):5157--5172

\bibitem[{Ma and Li(2021)}]{ma2021characterizations}
Ma H, Li T (2021) Characterizations and representations of the core inverse and
  its applications. Linear and Multilinear Algebra 69(1):93--103

\bibitem[{Ma and Stanimirovi{\'c}(2019)}]{ma2019characterizations}
Ma H, Stanimirovi{\'c} PS (2019) Characterizations, approximation and
  perturbations of the core-{EP} inverse. Applied Mathematics and Computation
  359:404--417

\bibitem[{Ma et~al(2020)Ma, Gao, and
  Stanimirovi{\'c}}]{ma2020characterizations}
Ma H, Gao X, Stanimirovi{\'c} PS (2020) Characterizations, iterative method,
  sign pattern and perturbation analysis for the {DMP} inverse with its
  applications. Applied Mathematics and Computation 378:125{196}

\bibitem[{Makoui and Gulliver(2023)}]{makoui20}
Makoui FH, Gulliver A (2023) Generalized inverse matrix construction for code
  based cryptography. Authorea Preprints

\bibitem[{Malik and Thome(2014)}]{Malik}
Malik SB, Thome N (2014) On a new generalized inverse for matrices of an
  arbitrary index. Applied Mathematics and Computation 226:575--580

\bibitem[{Manjunatha~Prasad and Mohana(2014)}]{core}
Manjunatha~Prasad K, Mohana K (2014) Core--{EP} inverse. Linear and Multilinear
  Algebra 62(6):792--802

\bibitem[{Mehdipour and Salemi(2018)}]{am16}
Mehdipour M, Salemi A (2018) On a new generalized inverse of matrices. Linear
  and Multilinear Algebra 66(5):1046--1053

\bibitem[{Mitra et~al(2010)Mitra, Bhimasankaram, and Malik}]{MitraSK}
Mitra SK, Bhimasankaram P, Malik SB (2010) Matrix Partial Orders, Shorted
  Operators and Applications, vol~10. World Scientific

\bibitem[{Mosi{\'c} et~al(2021)Mosi{\'c}, Stanimirovi{\'c}, and
  Ma}]{mosic2021generalization}
Mosi{\'c} D, Stanimirovi{\'c} PS, Ma H (2021) Generalization of core-{EP}
  inverse for rectangular matrices. Journal of Mathematical Analysis and
  Applications 500(1):125{101}

\bibitem[{Stanojevi{\'c} et~al(2022)Stanojevi{\'c}, Kazakovtsev,
  Stanimirovi{\'c}, Rezova, and Shkaberina}]{stanojevic20}
Stanojevi{\'c} V, Kazakovtsev L, Stanimirovi{\'c} PS, Rezova N, Shkaberina G
  (2022) Calculating the {Moore--Penrose} generalized inverse on massively
  parallel systems. Algorithms 15(10):348

\bibitem[{Wang et~al(2018)Wang, Wei, and Qiao}]{wang2018generalized}
Wang G, Wei Y, Qiao S (2018) Generalized Inverses: Theory and Computations,
  vol~53. Springer

\bibitem[{Wang(2016)}]{wang2016}
Wang H (2016) Core-{EP} decomposition and its applications. Linear Algebra and
  Its Applications 508:289--300

\bibitem[{Wang et~al(2024)Wang, Jiang, Ling, and Wei}]{wang2024dual}
Wang H, Jiang T, Ling Q, Wei Y (2024) Dual core-nilpotent decomposition and
  dual binary relation. Linear Algebra and its Applications 684:127--157

\bibitem[{Xing et~al(2022)Xing, Yan, Li, and Cui}]{xing20}
Xing J, Yan P, Li W, Cui S (2022) Generalized inverse matrix-long short-term
  memory neural network data processing algorithm for multi-wavelength
  pyrometry. Optics Express 30(26):{460}81--{460}93

\bibitem[{Xu et~al(2020)Xu, Chen, and Mosi{\'c}}]{new}
Xu S, Chen J, Mosi{\'c} D (2020) New characterizations of the {CMP} inverse of
  matrices. Linear and Multilinear Algebra 68(4):790--804

\bibitem[{Zhang and Jiang(2023)}]{zhang2023new}
Zhang Y, Jiang Z (2023) A new partial order based on core partial order and
  star partial order. Journal of Mathematical Inequalities 17(2)

\bibitem[{Zhou et~al(2021)Zhou, Chen, and Thome}]{zhou2021characterizations}
Zhou M, Chen J, Thome N (2021) Characterizations and perturbation analysis of a
  class of matrices related to core-{EP} inverses. Journal of Computational and
  Applied Mathematics 393:{113}496

\bibitem[{Zhou et~al(2024)Zhou, Chen, and Thome}]{TOYUER}
Zhou M, Chen J, Thome N (2024) The core-{EP} inverse: A numerical approach for
  its acute perturbation. {T}o appear in {L}inear and {M}ultilinear {A}lgebra.

\bibitem[{Zuo et~al(2020)Zuo, Li, and Luo}]{Zuo}
Zuo K, Li Y, Luo G (2020) A new generalized inverse of matrices from core-{EP}
  decomposition. arXiv preprint arXiv:200702364

\bibitem[{Zuo et~al(2021)Zuo, Baksalary, and Cvetkovi{\'c}-Ili{\'c}}]{further}
Zuo K, Baksalary OM, Cvetkovi{\'c}-Ili{\'c} D (2021) Further characterizations
  of the co-{EP} matrices. Linear Algebra and its Applications 616:66--83

\end{thebibliography}

\end{document}